\newcommand*{\barfix}[2][.175ex]{%
  \mathpalette{\@barfix{#1}}{#2}%
}
\newcommand*{\@barfix}[3]{%
  \vbox{%
    \kern#1\relax
    \hbox{$#2#3\m@th$}%
  }%
}
\newtheorem{theorem}{Theorem}
\newtheorem{thm}{Theorem}[section]
\newtheorem{corollary}[thm]{Corollary}
\newtheorem{lemma}[thm]{Lemma}
\newtheorem{claim}[thm]{Claim}
\newtheorem{remark}[thm]{Remark}
\newtheorem{question}[thm]{Question}
\newtheorem{conjecture}[thm]{Conjecture}
\title{A Jump of the Saturation Number \\in Random Graphs?}
\author{Sahar Diskin\footnote{School of Mathematical Sciences, Tel Aviv University, Tel Aviv 6997801, Israel.} \qquad Ilay Hoshen\footnote{School of Mathematical Sciences, Tel Aviv University, Tel Aviv 6997801, Israel.}\qquad
        Maksim Zhukovskii\footnote{Department of Computer Science, The University of Sheffield, Sheffield S1 4DP, United Kingdom.}
}
\begin{document}
\maketitle

\begin{abstract}
For graphs $G$ and $F$, the saturation number $\textit{sat}(G,F)$ is the minimum number of edges in an inclusion-maximal $F$-free subgraph of $G$. In 2017, Kor\'andi and Sudakov initiated the study of saturation in random graphs. They showed that for constant $p\in (0,1)$, \textbf{whp} $\textit{sat}\left(G(n,p),K_s\right)=\left(1+o(1)\right)n\log_{\frac{1}{1-p}}n$.

We show that for every graph $F$ and every constant $p\in (0,1)$, \textbf{whp} $\textit{sat}\left(G(n,p), F\right)=O(n\ln n)$. Furthermore, if every edge of $F$ belongs to a triangle, then the above is the right asymptotic order of magnitude, that is, \textbf{whp} $\textit{sat}\left(G(n,p),F\right)=\Theta(n\ln n)$. We further show that for a large family of graphs $\mathcal{F}$ with an edge that does not belong to a triangle, which includes all bipartite graphs, for every $F\in \mathcal{F}$ and constant $p\in(0,1)$, \textbf{whp} $\textit{sat}\left(G(n,p),F\right)=O(n)$. We conjecture that this sharp transition from $O(n)$ to $\Theta(n\ln n)$ depends only on this property, that is, that for any graph $F$ with at least one edge that does not belong to a triangle, \textbf{whp} $\textit{sat}\left(G(n,p),F\right)=O(n)$.

We further generalise the result of Kor\'andi and Sudakov, and show that for a more general family of graphs $\mathcal{F}'$, including all complete graphs $K_s$ and all complete multipartite graphs of the form $K_{1,1,s_3,\ldots, s_{\ell}}$, for every $F\in \mathcal{F}'$ and every constant $p\in(0,1)$, \textbf{whp} $\textit{sat}\left(G(n,p),F\right)=\left(1+o(1)\right)n\log_{\frac{1}{1-p}}n$. Finally, we show that for every complete multipartite graph $K_{s_1, s_2, \ldots, s_{\ell}}$ and every $p\in \left[\frac{1}{2},1\right)$, $\textit{sat}\left(G(n,p),K_{s_1,s_2,\ldots,s_{\ell}}\right)=\left(1+o(1)\right)n\log_{\frac{1}{1-p}}n$.
\end{abstract}

\section{Introduction}
\subsection{Background and main results}
For two graphs $G$ and $F$, a subgraph $H\subseteq G$ is said to be \textit{$F$-saturated} in $G$ if it is a maximal $F$-free subgraph of $G$, that is, $H$ does not contain any copy of $F$ as a subgraph, but adding any missing edge $e\in E(G)\setminus E(H)$ creates a copy of $F$ (throughout the paper, we assume for convenience that $F$ does not contain isolated vertices and note that it does not cause any loss of generality). The minimum number of edges in an $F$-saturated subgraph in $G$ is called the \textit{saturation number}, which we denote by $\textit{sat}(G,F)$. Zykov \cite{Z49}, and independently Erd\H{o}s, Hajnal, and Moon \cite{EHM64} initiated the study of the saturation number of graphs, specifically of $\textit{sat}(K_n, K_s)$. Since then, there has been an extensive study of $\textit{sat}(K_n, F)$ for different graphs $F$. Note that the maximum number of edges in an $F$-saturated graph is $ex(n,F)$, and hence the saturation problem of finding $\textit{sat}(K_n,F)$ is, in some sense, the opposite of Tur\'an problem. Of particular relevance is the following result due to K\'aszonyi and Tuza \cite{KT86}, which shows that for every graph $F$, there exists some constant $c=c(F)$ such that $\textit{sat}(K_n, F)\le cn$. We refer the reader to \cite{FFS11} for a comprehensive survey on results on saturation numbers of graphs. 

In 2017, Kor\'andi and Sudakov initiated the study of the saturation problem for random graphs, that is, when the host graph $G$ is the binomial random graph $G(n,p)$ for constant $p$. Considering the saturation number for cliques in the binomial random graph, $\textit{sat}(G(n,p), K_s)$, they showed the following:
\begin{thm}[Theorem 1.1 in \cite{KS17}]\label{th: KS main result}
    Let $0<p<1$ be a constant and let $s\ge 3$ be an integer. Then \textbf{whp}\footnote{With high probability, that is, with probability tending to $1$ as $n$ tends to infinity.}
    \begin{align*}
        \textit{sat}\left(G(n,p),K_s\right)=\left(1+o(1)\right)n\log_{\frac{1}{1-p}}n.
    \end{align*}
\end{thm}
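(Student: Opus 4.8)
The plan is to prove matching bounds $\textit{sat}(G(n,p),K_s)=(1\pm o(1))\,n\ell$, where throughout $\ell:=\log_{\frac{1}{1-p}}n$ and $G:=G(n,p)$.

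\noindent\textbf{Lower bound.} Let $H$ be any $K_s$-saturated subgraph of $G$; we may assume $|E(H)|\le n\ell$, else we are already done. Since adding any $uv\in E(G)\setminus E(H)$ creates a copy of $K_s$, the endpoints $u,v$ have a common $H$-neighbour, so for every vertex $v$,
\[
  N_G(v)\setminus N_H[v]\ \subseteq\ \bigcup_{w\in N_H(v)}N_H(w)\ \subseteq\ \bigcup_{w\in N_H(v)}N_G(w).
\]
Call $w$ \emph{heavy} if $\deg_H(w)\ge\log^3 n$ and \emph{light} otherwise; the edge bound forces only $o(n)$ heavy vertices. For a light $v$, its light neighbours $H$-dominate fewer than $\deg_H(v)\cdot\log^3 n<\log^6 n$ vertices, so the \emph{heavy} neighbours of $v$ must $G$-dominate all but at most $\log^6 n$ of the $(1-o(1))pn$ vertices of $N_G(v)\setminus N_H[v]$. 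Fix a small $\delta>0$. I would then run a union bound over $v$ and over all sets $B\subseteq V$ with $|B|<(1-\delta)\ell$ — there are only $\exp(O(\log^2 n))$ of these — against the Chernoff estimate that for fixed $v,B$ the number of $u$ with $u\sim_G v$ but $u\not\sim_G B$ is binomially distributed with mean at least $(1-o(1))pn(1-p)^{|B|}\ge pn^{\delta}$, which is polynomially larger than $\log^6 n$; this shows that \textbf{whp} every light vertex has at least $(1-\delta)\ell$ heavy neighbours. Counting the light--heavy edges of $H$ from the light endpoints then gives $|E(H)|\ge(1-\delta)\ell\cdot(n-o(n))$, and sending $\delta\to0$ yields $|E(H)|\ge(1-o(1))n\ell$. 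The only delicate choice is the threshold $\log^3 n$: it must be small enough that light neighbours ``cannot help'' yet large enough that only $o(n)$ vertices are heavy, so the union bound survives.

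\noindent\textbf{Upper bound.} Here one must build a $K_s$-saturated $H\subseteq G$ with $(1+o(1))n\ell$ edges, and the construction should realise the configuration the lower bound pins down: a set $U$ of $\Theta(\log n)$ ``hubs'', a sparse ($o(n)$-edge) $K_s$-free gadget on and around $U$, and, for each remaining vertex $v$, a carefully chosen set $D_v\subseteq U\cap N_G(v)$ of size $(1+o(1))\ell$ that $H$-dominates $N_G(v)$, with $v$ joined exactly to $D_v$. Such an $H$ is $K_s$-free as long as each $D_v$ lies inside an independent set of the hub gadget, and it is saturated provided (i) any $G$-edge between two non-hub vertices $u,v$ finds an $(s-2)$-clique inside $N_H(u)\cap N_H(v)$ — here the hub gadget is used to upgrade a \emph{shared hub} into a \emph{shared $(s-2)$-clique} — and (ii) the $G$-edges incident to hubs are handled by the gadget. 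The heart of the matter is choosing the $D_v$ so that they are simultaneously efficient (size $(1+o(1))\ell$), pairwise linked through $U$, and compatible with $K_s$-freeness; I expect this forces a layered or recursive construction, the natural target being a recursion of the shape
\[
  \textit{sat}(G(n,p),K_s)\ \le\ \tfrac{1}{1-p}\,\textit{sat}\!\left(G((1-p)n,p),K_s\right)+(1+o(1))n,
\]
which unwinds to exactly $(1+o(1))\,n\log_{\frac{1}{1-p}}n$.

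\noindent\textbf{Main obstacle.} The lower bound is fairly robust, so the real work — and the step I expect to be hardest — is the upper bound, and within it the \emph{sharp constant}. A crude two-layer construction (take $\Theta(\log n)$ hubs and join each one to all of its non-hub $G$-neighbours) is already $K_s$-saturatable and has $O(n\log n)$ edges, but a short computation shows its edge count is a constant factor larger than $n\log_{\frac{1}{1-p}}n$; eliminating that slack — so that the domination carries no waste while the $K_s$-free and saturation constraints still hold — is the crux of the proof.
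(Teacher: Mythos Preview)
This theorem is quoted from Kor\'andi--Sudakov \cite{KS17} and is not proved in the present paper; it is only recovered here as the special case $K_s=K_{1,1,\dots,1}$ of Corollary~\ref{c: immediate} (i.e.\ Theorem~\ref{th: property star} together with Theorem~\ref{th: KS triangleS}).

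Your lower-bound argument is essentially the one behind Theorem~\ref{th: KS triangleS}: saturation forces $N_H(v)$ to $H$-dominate (hence $G$-dominate) $N_G(v)$, the union bound over all candidate sets $B$ of size below $(1-\delta)\ell$ --- there are $\exp(O(\log^2 n))$ of them --- beats the Chernoff deviation $\exp(-\Omega(n^{\delta}))$, and the heavy/light split converts this into an edge count. That part is sound.

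The upper bound, however, has a genuine gap. You correctly anticipate a hub set $U$ of size $\Theta(\log n)$ and a ``gadget'' on $U$ that upgrades a shared hub into a shared $(s-2)$-clique, but you do not say how to build this gadget, and the recursion you fall back on does not produce one. The actual construction (both in \cite{KS17} and in the proof of Theorem~\ref{th: property star} here) takes $|U|=(1+o(1))\tfrac{1}{p}\ell$ and keeps \emph{all} edges of $G$ between $U$ and $B=[n]\setminus U$; this already costs $(1+o(1))p|U|n=(1+o(1))n\ell$ edges, so there is no constant-factor slack to remove and your remark to that effect is off. The real question is what to put \emph{inside} $U$: one needs a spanning subgraph $H_U\subseteq G[U]$ that is $K_{s-1}$-free (so that $H$ is $K_s$-free) yet such that every induced subgraph on $|U|/\mathrm{polylog}\,|U|$ vertices contains a $K_{s-2}$ (so that the common $U$-neighbourhood of almost every pair in $B$ contains a $K_{s-2}$, giving saturation). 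The existence of such an $H_U$ is a nontrivial Ramsey-type statement --- precisely Lemma~\ref{l: ABlemma} with $A=K_{s-2}$ and $\mathcal{F}=\{K_{s-1}\}$ --- and it, not a recursion, is the missing ingredient. The residual $o(n)$ vertices whose $U$-degree is atypical are then handled by a secondary gadget contributing $o(n\ell)$ edges.
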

If $F$ is a graph such that every $e\in E(F)$ belongs to a triangle, it is not hard to see that $\textit{sat}\left(G(n,p),F\right)=\Omega(n\ln n)$. Indeed, the neighbourhood $N_H(v)$ of every vertex $v$ in the saturated subgraph $H$ should dominate its neighbourhood in $G(n,p)$, and thus \textbf{whp} $|N_H(v)|$ should be at least of logarithmic order in $n$. More precisely, the lower bound in \cite{KS17} comes from the following:
\begin{thm}[Theorem 2.2 in \cite{KS17}]\label{th: KS triangleS}
    Let $0<p<1$ be a constant. Let $F$ be a graph such that every $e \in E(F)$ belongs to a triangle in $F$. Then \textbf{whp}
    \begin{align*}
        \textit{sat}\left(G(n,p), F\right)\ge \left(1+o(1)\right)n\log_{\frac{1}{1-p}}n.
    \end{align*}
\end{thm}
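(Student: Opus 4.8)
The plan is to extract a purely structural statement and then apply a first-moment estimate on $G(n,p)$. Let $H$ be any $F$-saturated subgraph of $G=G(n,p)$. If $e=uv\in E(G)\setminus E(H)$ then $H+e$ contains a copy of $F$ using $e$; as the edge of $F$ mapped onto $e$ lies on a triangle of $F$, that copy contains a vertex $w\notin\{u,v\}$ with $uw,vw\in E(H)$. Hence every non-edge of $H$ that is an edge of $G$ has a common $H$-neighbour; equivalently, for every vertex $v$ the set $N_H(v)$ dominates $N_G(v)$ in $H$ (hence in $G$), and in fact $N_H(u)\cap N_H(v)\neq\emptyset$ whenever $u\in N_G(v)\setminus N_H(v)$. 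Since this uses nothing about $H$ beyond $H\subseteq G$ and this covering property, it suffices to lower bound $|E(H)|$ over all $H\subseteq G$ with it --- in particular, $F$-freeness of $H$ plays no further role.

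The main probabilistic ingredient fixes the minimum degree of $H$. I would show that \textbf{whp} $G(n,p)$ has the property that for every $v$ and every $S\subseteq [n]\setminus\{v\}$ with $|S|\le (1-\varepsilon)\log_{\frac{1}{1-p}}n$ there is some $u\in N_G(v)\setminus S$ with $N_G(u)\cap S=\emptyset$. For fixed $v$ and $S$ with $|S|=k$, a vertex $u\notin S\cup\{v\}$ has this property with probability $p(1-p)^{k}$, independently over $u$, so the chance that none does is at most $\exp\big(-(n-k-1)\,p(1-p)^{k}\big)\le \exp(-\tfrac{p}{2}n^{\varepsilon})$ because $(1-p)^{k}\ge n^{-(1-\varepsilon)}$; there are only $e^{O(\log^{2}n)}$ admissible pairs $(v,S)$, so a union bound suffices. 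Applying this with $S=N_H(v)$: if $d_H(v)\le (1-\varepsilon)\log_{\frac{1}{1-p}}n$ for some $v$ we would get $u\in N_G(v)\setminus N_H(v)$ with no $H$-neighbour in $N_H(v)$, i.e.\ a non-edge $uv$ of $H$ in $G$ with no common $H$-neighbour, contradicting saturation. Thus $\delta(H)\ge (1-o(1))\log_{\frac{1}{1-p}}n$, so already $|E(H)|\ge \big(\tfrac12-o(1)\big)\,n\log_{\frac{1}{1-p}}n$.

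Upgrading the constant from $\tfrac12$ to $1$ is the harder half. Suppose for contradiction $|E(H)|\le (1-\delta)\,n\log_{\frac{1}{1-p}}n$; then at least $\delta n$ vertices are \emph{light}, of $H$-degree at most $2\log_{\frac{1}{1-p}}n$. Here I would use that $N_H(v)$ must cover $N_G(v)$ \emph{through second neighbourhoods}: a common $H$-neighbour of $u$ and $v$ lies in $N_H(u)\cap N_H(v)$, so $N_G(v)\setminus N_H(v)\subseteq\bigcup_{w\in N_H(v)}N_H(w)$ and therefore $\sum_{w\in N_H(v)}d_H(w)\ge (1-o(1))pn$ for all but the $o(n)$ vertices of largest degree. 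For a light $v$ this forces a neighbour among the \emph{hubs} of $H$-degree $\gtrsim n/\log n$, and since we may assume $|E(H)|=O(n\log n)$, there are only $O(\log^{2}n)$ hubs in all. The crux is that the constraint $N_H(w)\subseteq N_G(w)$ for the random $G$ prevents so small a family of hubs from covering all of the $\Omega(n^{2})$ required $G$-edges between light vertices: a careful accounting of how much of each neighbourhood a hub can actually cover, a second union bound over $G$, and the minimum-degree bound should together contradict the assumption, giving $\sum_v d_H(v)\ge (2-o(1))\,n\log_{\frac{1}{1-p}}n$.

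The main obstacle is exactly this last step: the first-moment argument for the minimum degree carries an intrinsic factor-$2$ loss, and regaining the sharp constant requires a genuinely global, quantitative grip on the hub structure --- not just that hubs exist, but that their combined weight is large --- and even the $O(\log^{2}n)$ bound on their number is already too weak for a naive union bound over $G$. Making this precise, while carrying along the $o(n)$ exceptional high-degree vertices and the (potentially high-degree) common neighbours so that all the union bounds survive, is where the real work lies.
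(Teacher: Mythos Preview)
This theorem is not proved in the present paper; it is quoted from Kor\'andi--Sudakov~\cite{KS17} (their Theorem~2.2), and the paper itself offers only the one-line heuristic preceding the statement: for a saturated $H$, the set $N_H(v)$ must dominate $N_G(v)$, so $|N_H(v)|\gtrsim\log_{1/(1-p)}n$. Your first two paragraphs make exactly this heuristic rigorous and are correct: the union bound over pairs $(v,S)$ with $|S|\le(1-\varepsilon)\log_{1/(1-p)}n$ goes through as you describe, and yields $\delta(H)\ge(1-o(1))\log_{1/(1-p)}n$, hence $|E(H)|\ge\bigl(\tfrac12-o(1)\bigr)\,n\log_{1/(1-p)}n$. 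This matches both the paper's intuition and the easy half of the argument in~\cite{KS17}.

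The third paragraph --- recovering the full constant $1$ --- is where the real content of the \cite{KS17} proof lies, and your sketch does not close. You correctly observe that a light vertex $v$ must have some $H$-neighbour of degree $\gtrsim n/\log n$ (since $N_H(v)$ $H$-dominates $N_G(v)$ and $|N_H(v)|=O(\log n)$), and that there are only $O(\log^{2}n)$ such hubs. But the proposed contradiction --- that these hubs cannot ``cover all of the $\Omega(n^{2})$ required $G$-edges between light vertices'' --- does not follow. First, the common $H$-neighbour $w$ witnessing a light--light pair $uv\in E(G)\setminus E(H)$ need not itself be a hub; your hub argument only guarantees \emph{one} hub in $N_H(v)$, not that every witness is a hub. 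Second, even granting that, a single hub $w$ of degree up to $(1+o(1))pn$ supplies up to $\binom{pn}{2}$ cherries, so $O(\log^{2}n)$ hubs comfortably provide $\gg n^{2}$ cherries --- there is no numerical contradiction. You yourself note that the $O(\log^{2}n)$ hub count ``is already too weak for a naive union bound over $G$'', but no replacement mechanism is given; the sentence ``a careful accounting \ldots should together contradict the assumption'' is precisely the missing proof.

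In summary: the easy half is done and aligns with the paper's sketch, but upgrading $\tfrac12$ to $1$ remains a genuine gap, and for that step one must go to the actual argument in~\cite{KS17}.
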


Following \cite{KS17}, there has been subsequent work on the saturation number $\textit{sat}\left(G(n,p), F\right)$ for other concrete graphs $F$. Mohammadian
and Tayfeh-Rezaie~\cite{MT18} and Demyanov and Zhukovskii~\cite{DZ22} proved tight asymptotics for $F=K_{1,s}$. Demidovich, Skorkin and Zhukovskii~\cite{DSZ23} proved that \textbf{whp} $\textit{sat}\left(G(n,p), C_m\right)=n+\Theta\left(\frac{n}{\ln n}\right)$ when $m\ge 5$, and showed that \textbf{whp} $\textit{sat}(G(n,p), C_4)=\Theta(n)$.

In this paper, we revisit the problem of saturation number in $G(n,p)$. Our first main result gives a general upper bound, holding for all graphs $F$.
\begin{theorem} \label{th: global 1}
Let $0<p<1$ be a constant. Let $F$ be an arbitrary graph. Then \textbf{whp} $$\textit{sat}\left(G(n,p), F\right)=O(n\ln n).$$
\end{theorem}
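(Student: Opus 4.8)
The plan is to exhibit, whp, an $F$-saturated subgraph $H\subseteq G(n,p)$ with $O(n\ln n)$ edges; equivalently, an $F$-free subgraph $H$ with the \emph{completion property}: every $uv\in E(G(n,p))\setminus E(H)$ satisfies $F\subseteq H+uv$. One may assume $F$ is connected with at least two vertices (the general case follows, with minor modifications, by working with a single component). Fix an edge $xy\in E(F)$, write $\hat F=F-\{x,y\}$ for the graph obtained by deleting $x,y$ and all incident edges, and let $A=N_F(x)\setminus\{y\}$ and $B=N_F(y)\setminus\{x\}$, regarded as subsets of $V(\hat F)$. The basic mechanism is: $H+uv$ contains $F$ as soon as $H$ contains a copy of $\hat F$ in which $u$ is joined to the image of $A$ and $v$ to the image of $B$.

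\emph{The construction.} Put $M=\lceil C\ln n\rceil$ for a large constant $C=C(F,p)$. Since $p$ is constant, whp $G(n,p)$ contains $M$ pairwise vertex-disjoint copies $Z^{(1)},\dots,Z^{(M)}$ of $\hat F$, spanning only $M(|V(F)|-2)=o(n)$ vertices; call $D=\bigcup_j V(Z^{(j)})$ the \emph{hub}, and let $A^{(j)},B^{(j)}\subseteq Z^{(j)}$ be the corresponding copies of $A,B$. For $v\notin D$ set $S^x_v=\{j:\ v\text{ is }G\text{-adjacent to every vertex of }A^{(j)}\}$ and $S^y_v=\{j:\ v\text{ is }G\text{-adjacent to every vertex of }B^{(j)}\}$; since $\Pr[j\in S^x_v]=p^{|A|}$ with these events independent over $j$, whp every $v\notin D$ has $|S^x_v|,|S^y_v|=\Theta(\ln n)$, and we thin each down to exactly $\lceil c\ln n\rceil$. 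Let $H$ consist of the edges inside each $Z^{(j)}$; all edges from $v$ to $A^{(j)}$ for $v\notin D$, $j\in S^x_v$; all edges from $v$ to $B^{(j)}$ for $v\notin D$, $j\in S^y_v$; and \emph{no} edges within $V(G)\setminus D$. Then $|E(H)|\le M\binom{|V(F)|-2}{2}+n\cdot O(\ln n)=O(n\ln n)$.

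\emph{The completion property.} If $uv$ is a non-edge of $H$ with $u,v\notin D$ and some $j\in S^x_u\cap S^y_v$, then $Z^{(j)}$ together with $u$ (as $x$), $v$ (as $y$) and the new edge $uv$ is a copy of $F$; symmetrically if $S^y_u\cap S^x_v\neq\emptyset$. For a fixed pair, $S^x_u$ and $S^y_v$ depend on disjoint sets of edges of $G(n,p)$, hence are independent; as each has size $\Theta(\ln n)$ inside $[M]$ with $M=\lceil C\ln n\rceil$, the probability they are disjoint is at most $(1-p^{|B|})^{\Theta(\ln n)}\le n^{-\omega(1)}$ once $C$ is large enough, so a union bound over the fewer than $n^2$ pairs completes all of them whp. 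The $O(n\ln n)$ pairs with an endpoint in $D$ (and $O(\ln^2 n)$ inside $D$) are handled by a cheap extra gadget — e.g. also attach each hub vertex to $\Theta(\ln n)$ further copies in the remaining roles, and take $H[D]$ to be an $F$-saturated subgraph of $G[D]$ — at a cost of only $O(n\ln n)$ further edges.

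\emph{$F$-freeness: the main obstacle.} In any copy of $F$ inside $H$, the preimage of $V(G)\setminus D$ is an independent set of $F$, and each of its vertices is joined in $H$ only to a prescribed small part ($A^{(j)}$ or $B^{(j)}$) of a single hub copy; the aim is to use this rigidity together with the disjointness of the $Z^{(j)}$ to forbid any copy of $F$. This succeeds for ``generic'' $F$ but genuinely fails for some degenerate $F$: when $F$ is a star the hub vertices acquire degree $\Theta(n/\ln n)$ and one recreates $F$, and analogous problems occur for brooms and other graphs ``close to forests''. Such $F$ are bipartite (or otherwise lie in the family of the companion result), so for them one instead invokes the $O(n)$ bound of that result — or, in the simplest cases, a nearly-$d$-regular spanning subgraph of $G(n,p)$ for the appropriate constant $d$, which is automatically $F$-free and $F$-saturated — whence $O(n\ln n)$ holds a fortiori. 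Determining exactly which $F$ require which construction, and, for the core--pendant construction, choosing $xy$ (and where necessary bounding the hub degrees or splitting $V(G)\setminus D$ into boundedly many independent classes) so as to eliminate every spurious copy of $F$, is the technical heart of the proof; the probabilistic ingredients — disjoint copies of $\hat F$, concentration of the $|S^{\,\cdot}_v|$, and the independence in the union bound — are routine for constant $p$.
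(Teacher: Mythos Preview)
Your approach differs substantially from the paper's, and the gap is exactly where you flag it: $F$-freeness of $H$. Take $F=K_{2,2,2}$ with parts $\{x,x'\},\{y,y'\},\{z,z'\}$ and choose the edge $xy$ (all edges are equivalent). Then $\hat F$ is the graph on $\{x',y',z,z'\}$ with all pairs adjacent except $zz'$, so each $Z^{(j)}$ contains the $4$-cycle $x'\,z\,y'\,z'$; moreover $A^{(j)}=\{y',z,z'\}$, $B^{(j)}=\{x',z,z'\}$, and $A^{(j)}\cup B^{(j)}=V(Z^{(j)})$. Any $v\notin D$ with $j\in S^x_v\cap S^y_v$ is adjacent in $H$ to all four vertices of $Z^{(j)}$, and for each fixed $j$ there are whp $\Theta(np^4)$ such $v$. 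Two of them, together with $Z^{(j)}$, form a copy of $K_{2,2,2}$ in $H$ (parts $\{v_1,v_2\},\{x',y'\},\{z,z'\}$). Since $K_{2,2,2}$ is neither bipartite nor has property $(\ntriangleright)$, your fallback to ``the companion result'' does not apply. One can patch this particular $F$ by thinning so that $S^x_v\cap S^y_v=\emptyset$, but you neither do this nor argue that any uniform local rule on the $S^{\cdot}_v$ eliminates spurious copies for \emph{every} $F$; that is precisely the content you defer.

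The paper sidesteps the whole issue with a different, cleaner mechanism: induction on $\chi(\mathcal{F})$ for finite families. One takes a set $A$ of $C\ln n$ vertices with $H[A]$ \emph{empty}, places on $[n]\setminus A$ a subgraph $H'$ that is $\hat{\mathcal{F}}$-saturated (by induction, where $\hat{\mathcal{F}}=\{F\setminus I: F\in\mathcal{F},\ I\text{ independent}\}$ has strictly smaller chromatic number), and includes all $G(n,p)$-edges between $A$ and $[n]\setminus A$. Now $F$-freeness is automatic: in any copy of $F\in\mathcal{F}$, the part in $A$ is independent, so the part outside contains some $F\setminus I\in\hat{\mathcal{F}}$, contradicting the choice of $H'$. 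Completion is equally direct: an added edge in $[n]\setminus A$ closes some $F'\in\hat{\mathcal{F}}$, whose vertex set has enough common neighbours in $A$ to extend to $F$. The passage to families is what makes the induction go and what buys $F$-freeness for free; your single-$F$ hub construction has no analogous leverage.
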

Comparing with the result of \cite{KT86}, Theorem \ref{th: global 1} shows that the saturation number in random graphs can be larger than the saturation number in $K_n$ by at most a factor of $O(\ln n)$, whereas Theorem \ref{th: KS triangleS} shows that this is asymptotically tight.

We note that the hidden constant in $O(n\ln n)$ which we obtain in the proof may depend on the probability $p$ and the graph $F$. Note that if every edge of $F$ belongs to a triangle in $F$, then Theorems \ref{th: global 1} and \ref{th: KS triangleS} imply that \textbf{whp} $\textit{sat}\left(G(n,p),F\right)=\Theta(n\ln n)$. In fact, we conjecture that the asymptotics of the saturation number are dictated by the assumption of Theorem~\ref{th: KS triangleS}. That is:
\begin{conjecture}\label{conjecture}
    Let $0<p<1$ be a constant. If $F$ is a graph such that \textit{every} $e\in E(F)$ belongs to a triangle in $F$, then \textbf{whp}
    \begin{align*}
        \textit{sat}\left(G(n,p), F\right)=\Theta(n\ln n).
    \end{align*}
    On the other hand, if $F$ is a graph such that there \textit{exists} $e\in E(F)$ which does \textit{not} belong to a triangle in $F$, then \textbf{whp}
    \begin{align*}
        \textit{sat}\left(G(n,p),F\right)=O(n).
    \end{align*}
\end{conjecture}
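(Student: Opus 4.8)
The conjecture consists of two statements of very unequal difficulty. The first one is an immediate consequence of the results already stated: if every edge of $F$ lies in a triangle of $F$, then Theorem~\ref{th: KS triangleS} gives $\textit{sat}(G(n,p),F)\ge(1+o(1))n\log_{\frac{1}{1-p}}n=\Omega(n\ln n)$ \textbf{whp}, while Theorem~\ref{th: global 1} gives the matching upper bound $O(n\ln n)$ \textbf{whp}; together they yield $\textit{sat}(G(n,p),F)=\Theta(n\ln n)$. So the entire content of the conjecture is the second statement, and the plan is to attack it directly: given an edge $e=xy\in E(F)$ lying in no triangle (equivalently, $N_F(x)\cap N_F(y)=\varnothing$), construct \textbf{whp} an $F$-saturated subgraph $H\subseteq G(n,p)$ with only $O(n)$ edges.

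The construction would exploit $e$ as an ``insertion edge''. Since $H$ must be $F$-free, for every non-edge $uv$ of $G(n,p)$ the graph $H+uv$ must contain a copy of $F$ through the edge $uv$, and I would arrange for $uv$ to play the role of $e$; equivalently, $H$ should contain a copy of $F-e$ --- that is, $F$ with the edge $e$ deleted but all vertices kept --- with $x$ mapped to $u$ and $y$ mapped to $v$. The key steps, in order, would be: (1) reduce to producing, for \emph{every} vertex $w$ of $G(n,p)$, a copy of $F-e$ inside $H$ with $x$-image $w$, together with enough room that, for a given non-edge $uv$, the copy rooted at $u$ and the copy rooted at $v$ can be taken disjoint (except for $O(1)$ exceptional vertices); (2) realise this from a bounded-size, $F$-free ``gadget'' each of whose vertices is the $x$-image of a copy of $F-e$ inside it, using the fact that, $p$ being constant, $G(n,p)$ \textbf{whp} contains a near-spanning family of vertex-disjoint copies of any fixed gadget; (3) verify that the resulting union is $F$-free and has $O(n)$ edges, since each vertex carries only a bounded amount of structure. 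This goes through cleanly when $e$ is a bridge of $F$, so that $F-e=F_x\sqcup F_y$ with $x\in F_x$, $y\in F_y$: then the gadget at $u$ (a rooted copy of $F_x$) and the gadget at $v$ (a rooted copy of $F_y$) are genuinely independent; the extreme case $F_y=K_1$ (a pendant vertex $y$) is the easiest, essentially a factor into copies of $F$ minus that pendant vertex once the gadget is arranged so every vertex can serve as the $x$-image.

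The step I expect to be the main obstacle is steps~(1)--(2) when $F-e$ is \emph{connected}, i.e.\ when $F$ has an $x$--$y$ path avoiding $e$. Then the part of the copy of $F-e$ sitting at $u$ and the part sitting at $v$ must stitch together into one connected copy inside $H$, which is a non-local requirement, and I do not see how to meet it for \emph{every} non-edge $uv$ with only $O(n)$ edges. The natural remedy --- a bounded collection of ``hub'' gadgets such that every $u$ attaches to one and every $v$ attaches to a compatible one --- fails the edge budget: since each hub is adjacent to only a constant fraction of the vertices while every vertex must be adjacent to some hub, a coupon-collector estimate forces $\Theta(\ln n)$ hubs, each of degree up to $n$, hence $\Theta(n\ln n)$ edges --- exactly the barrier that the global bound of Theorem~\ref{th: global 1} also runs into. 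For bipartite $F$, and more generally for the family $\mathcal{F}$ mentioned in the introduction, one can get around this (roughly, bipartite-type constructions can afford a bounded number of high-degree vertices while keeping everything else of bounded degree), but I do not see how to push it through for an arbitrary $F$ with a triangle-free edge whose deletion leaves a connected, triangle-rich graph; that is precisely why this half of the statement is only conjectured, and why the paper settles for the family $\mathcal{F}$.
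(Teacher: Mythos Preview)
Your analysis is correct. The statement is a \emph{conjecture} in the paper, not a theorem, and the paper does not contain a proof of it. You have correctly identified that the first half is not conjectural at all: it is exactly the combination of Theorem~\ref{th: KS triangleS} (lower bound) and Theorem~\ref{th: global 1} (upper bound), and the paper says so explicitly in the paragraph preceding the conjecture. The second half is the genuinely open part; the paper establishes it only for the subclass of graphs with property~$(\ntriangleright)$ (Theorem~\ref{th: global 2}), which in particular covers all bipartite $F$.

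Your heuristic discussion of why the general case is hard is reasonable and honest, and it lands on essentially the same obstruction the paper's methods face: the constructions in Lemma~\ref{l: bipartite} and Theorem~\ref{th: global 2} work by planting small ``hub'' sets $A_i$ whose common neighbourhoods $B_i$ absorb most vertices, and the $F$-freeness of the resulting graph is guaranteed by a colouring argument that genuinely uses the special structure of~$(\ntriangleright)$. For an arbitrary $F$ with a triangle-free edge but with $F-e$ connected and not fitting~$(\ntriangleright)$, neither your gadget-factor idea nor the paper's hub construction is known to give $O(n)$ edges, which is precisely why the statement remains a conjecture.
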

Our second main result further advances us towards settling this conjecture. We define a family of graphs for which the saturation number in $G(n,p)$ is typically linear in $n$. Given a graph $F$, let $\chi(F)$ be the chromatic number of $F$. We say that $F$ has property $(\ntriangleright)$ if there exist a largest colour class $I_{\mathrm{max}}$, among all proper colourings of $F$ with $\chi(F)$ colours, and a vertex $v \in V(F) \setminus I_{\mathrm{max}}$ such that $N_F(v) \subseteq I_{\mathrm{max}}$ (see Figure \ref{f: property ntriangle}). Note that every bipartite graph satisfies property $(\ntriangleright)$.
\begin{theorem}\label{th: global 2}
Let $0<p<1$ be a constant. Let $F$ be a graph with property $(\ntriangleright)$. Then \textbf{whp} $$\textit{sat}\left(G(n,p),F\right)=O(n).$$
\end{theorem}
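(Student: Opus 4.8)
The plan is to construct, deterministically given an instance of $G(n,p)$, an $F$-free subgraph $H \subseteq G(n,p)$ with $O(n)$ edges that is saturated, i.e. adding any missing edge of $G(n,p)$ creates a copy of $F$. Let $k = \chi(F)$, let $I_{\mathrm{max}}$ be the largest colour class witnessing property $(\ntriangleright)$, and let $v_0 \in V(F) \setminus I_{\mathrm{max}}$ with $N_F(v_0) \subseteq I_{\mathrm{max}}$. Write $t = |I_{\mathrm{max}}|$ and $d = \deg_F(v_0) \le t$. The intuition is that because $F$ is not $(k-1)$-colourable, a balanced blow-up of $K_{k-1}$ is $F$-free; and because $v_0$ attaches only into one colour class, we can afford to add \emph{every} non-edge as a ``pendant-type'' edge into a small gadget and still create a copy of $F$, while keeping the total edge count linear.

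Concretely, I would first fix a partition $V(G(n,p)) = V_1 \cup \dots \cup V_{k-1}$ into nearly equal parts, together with a small ``core'' set $W$ of constant size inside, say, $V_1$. The subgraph $H$ will consist of (i) a bounded-degree \emph{connector} inside $G(n,p)$ that glues things together and, crucially, for every ordered pair of parts $(V_i,V_j)$ contains a fixed copy of an appropriate complete-multipartite pattern on $k-2$ of the parts meeting $W$ — this is the part that uses $p$ constant, via a standard first/second-moment argument showing $G(n,p)$ whp contains all bounded-size patterns on prescribed vertex sets; and (ii) for each vertex $u$, a constant number of edges from $u$ into $W$ (or into a small neighbourhood gadget) chosen so that together with the connector they realise the neighbourhood pattern $N_F(v_0) \subseteq I_{\mathrm{max}}$ around $u$. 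Since $W$ and all gadgets have constant size and each vertex contributes $O(1)$ edges, $e(H) = O(n)$.

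For $F$-freeness I would argue that $H$ is contained in a blow-up of $K_{k-1}$ plus a constant-size attached gadget, and that any copy of $F$ in such a graph would force a proper $(k-1)$-colouring of $F$ (using that the gadget is too small and too sparsely connected to host the high-degree part of $F$), contradicting $\chi(F) = k$; here the choice of $I_{\mathrm{max}}$ as a \emph{largest} colour class is what guarantees the pieces fit. For saturation: given a non-edge $e = \{x,y\}$ of $H$ that lies in $G(n,p)$, I would show $H + e$ contains a copy of $F$ by mapping $F \setminus \{v_0\}$ onto a pre-built copy of $F - v_0$ living on $W$ and the parts (available whp by the pattern-containment argument), with one of its $I_{\mathrm{max}}$-vertices landed on $x$, and mapping $v_0$ to $y$ — the edge $e$ plus the $d-1 = \deg_F(v_0)-1$ pre-installed edges from $y$ into $W$ complete the neighbourhood of $v_0$. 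The main obstacle, and where the bulk of the work goes, is the bookkeeping that makes saturation and $F$-freeness hold \emph{simultaneously}: the gadget must be rich enough that \emph{every} $G(n,p)$-edge's endpoints can play the roles of $x$ and $v_0$'s image (this needs the whp pattern-containment for all pairs of parts, and handling the few vertices whose random neighbourhoods into $W$ are atypical), yet sparse enough that no copy of $F$ already sits inside $H$. I expect the latter tension to be the crux, resolved by taking $W$ of sufficiently large but still constant size depending only on $F$, and by a careful case analysis of how a hypothetical copy of $F$ in $H$ could distribute its vertices among the parts and the gadget.
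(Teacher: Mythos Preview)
Your proposal has a genuine gap at the point you yourself flag as ``the crux'': a \emph{constant}-size core $W$ cannot do the job. For each vertex $y$, whether $y$ has the required edges into $W$ inside $G(n,p)$ is a constant-probability event, so a positive fraction --- hence $\Theta(n)$ --- of vertices will be ``atypical'' (indeed, $(1-p)^{|W|}n$ vertices have no $G(n,p)$-edge into $W$ at all). These are not ``few''; the edges of $G(n,p)$ spanned by the atypical vertices number $\Theta(n^2)$, and your scheme offers no way to complete them while keeping $e(H)=O(n)$. The saturation step also silently assumes far more than bounded degree permits: for an arbitrary missing edge $\{x,y\}$ you need a copy of $F-v_0$ in $H$ in which $x$ sits in the $I_{\max}$-position and the other $d-1$ images of $N_F(v_0)$ lie in the \emph{specific} constant-size set $N_H(y)\cap W$; with $H$ of bounded degree and $x$ ranging over all of $V(G)$, there is no mechanism that couples $N_H(x)$ to $N_H(y)$ in this way. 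Finally, the $F$-freeness sketch is shaky: once every vertex (including those in $V_1$) sends edges into $W\subseteq V_1$, the graph is no longer contained in a blow-up of $K_{k-1}$ plus a bounded piece, so the ``forces a $(k-1)$-colouring of $F$'' argument does not go through as stated.

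The paper sidesteps all of this by a different construction, following Lemma~\ref{l: bipartite}. Instead of one constant-size gadget it takes $\tau=\Theta(\log n)$ disjoint sets $A_1,\ldots,A_\tau$, each inducing a fixed copy of $F\setminus(I_{\max}\cup\{v\})$, and lets $B_i$ be the \emph{common} $G(n,p)$-neighbourhood of $A_i$ (outside earlier $B_j$'s). The point is that the bipartite graph between $A_i$ and $B_i$ is complete by construction, so \emph{any} $|I_{\max}|$ vertices of $B_i$ together with $A_i$ already realise $F-v$; hence as soon as a vertex $w$ acquires $|I_{\max}|$ neighbours in some $B_i$, a copy of $F$ (with $w$ playing $v$) appears. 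This gives a uniform degree cap and thus $O(n)$ edges, while the logarithmic number of gadgets shrinks the leftover set (vertices in no $B_i$) to $o(\sqrt n)$, so the residual edges are $o(n)$. $F$-freeness uses the \emph{maximality} of $I_{\max}$ directly: a copy of $F$ inside $A_i\cup B_i$ would have its $B_i$-part independent and of size $>|I_{\max}|$, contradicting the choice of $I_{\max}$. The key idea your outline is missing is exactly this ``common neighbourhood'' trick, which makes the role of $I_{\max}$ playable by arbitrary vertices of $B_i$ without any per-pair coordination.
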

Once again, the hidden constant in $O(n)$ which we obtain in the proof may depend on the probability $p$ and the graph $F$. Furthermore, observe that the vertex $v$ in Theorem \ref{th: global 2} does not belong to a triangle in $G$. In particular, since we assume $F$ has no isolated vertices and thus $v$ is not an isolated vertex, there exists at least one edge which does not lie in a triangle in $F$, supporting the second part of Conjecture \ref{conjecture}, although not resolving it completely. Furthermore, Theorem \ref{th: global 2} extends the \textit{asymptotic} results of \cite{DSZ23}, as every $F=C_m$ also satisfies property $(\ntriangleright)$.

\begin{figure}[H]
\centering
\includegraphics[width=1.8in,height=1.8in]{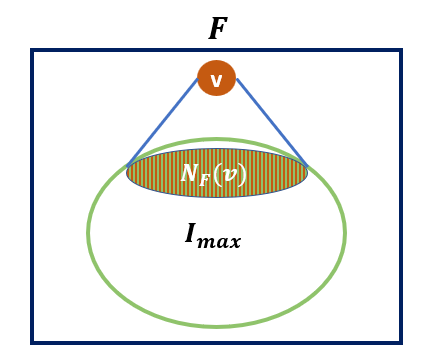}
\caption{An illustration of graphs satisfying property $(\ntriangleright)$.}
\label{f: property ntriangle}
\end{figure}

The second part of the paper aims for tight asymptotic bounds. Indeed, more ambitiously, one could try and aim for tight asymptotics in the case where every edge of $F$ belongs to a triangle (as in Theorems \ref{th: KS main result} and \ref{th: KS triangleS}). Our next two results aim at extending the tight asymptotics of Theorem \ref{th: KS main result} to a wider family of graphs. The first one extends Theorem \ref{th: KS main result} to complete multipartite graphs for $p\ge \frac{1}{2}$ (note that the case of bipartite graphs, that is $\ell=2$, is covered by Theorem \ref{th: global 2}).
\begin{theorem}\label{th: complete mp}
Let $\ell \ge 3$, $s_1\le s_2 \le \ldots \le s_{\ell}$, and let $\frac{1}{2}\le p< 1$ be a constant. Then \textbf{whp} $$\textit{sat}\left(G(n,p), K_{s_1,\ldots, s_{\ell}}\right)=\left(1+o(1)\right)n\log_{\frac{1}{1-p}}n.$$
\end{theorem}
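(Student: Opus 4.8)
The lower bound is immediate. Since $\ell\ge 3$ and every part of $F:=K_{s_1,\dots,s_\ell}$ is non-empty, each edge $xy$ of $F$ — whose endpoints lie in distinct parts — lies in a triangle (add any vertex of a third part), so Theorem~\ref{th: KS triangleS} gives $\textit{sat}(G(n,p),F)\ge(1-o(1))\,n\log_{\frac1{1-p}}n$ \textbf{whp}. The substance of the theorem is thus the matching upper bound, and my plan is to adapt the core-plus-periphery construction that Kor\'andi and Sudakov use for cliques. Concretely, I would build \textbf{whp} an $F$-saturated $H\subseteq G:=G(n,p)$ with at most $(1+o(1))\,n\log_{\frac1{1-p}}n$ edges of the following shape: a sublinear ``core'' $W$; a carefully chosen $F$-free subgraph $H[W]\subseteq G[W]$; no $H$-edges inside $V(G)\setminus W$; and, for each peripheral $v$, a set $N_H(v)\subseteq N_G(v)\cap W$ of size $(1+o(1))\log_{\frac1{1-p}}n$, so that the edge count is dominated by the peripheral degrees. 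Exactly as in \cite{KS17}, the core and the peripheral neighbourhoods must be chosen so that (i) no copy of $F$ ever appears — inside $H[W]$, nor by attaching a peripheral vertex (or two) to a near-copy of $F$ living in $W$ — and (ii) for every non-edge $uv$ of $G$, the graph $H+uv$ contains $F$.

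The point that distinguishes the complete multipartite case from cliques is the structure that common neighbourhoods must robustly contain. To create $F$ on adding $uv$ we must send $u,v$ into two \emph{distinct} parts, say $i$ and $j$. Writing $C$ for the union of the remaining parts (so $C\cong K_{s_1,\dots,s_\ell}$ with parts $i$ and $j$ deleted), $A$ for the other $s_i-1$ vertices of part $i$ and $B$ for the other $s_j-1$ vertices of part $j$, we need $H$ to contain vertex-disjoint realisations of $C$, $A$, $B$ with $A,B$ independent, $A$ complete to $B$, and $C$ complete to $A\cup B$, where $u$ is $H$-adjacent to $C\cup B$ and $v$ is $H$-adjacent to $C\cup A$ while $uv\notin E(H)$. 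Thus the ``gadget'' is $K_{s_1,\dots,s_\ell}$ with one vertex deleted from each of two parts, and the two deleted slots are filled asymmetrically by $u$ and $v$; the dominant (peripheral $u,v$) case therefore requires that the common $H$-neighbourhood $N_H(u)\cap N_H(v)$ in $W$ host $C$ and that $N_H(u)$, $N_H(v)$ separately reach $B$, $A$. Building $H[W]$ so that every sufficiently large subset of $W$ carries this (finite, deterministic) configuration in $H$ is routine density counting in $G[W]\sim G(|W|,p)$; the delicate part, as in \cite{KS17}, is arranging the peripheral neighbourhoods $\{N_H(v)\}$ to have the strong pairwise-intersection property — any two meeting in a set large enough, and suitably aligned with the parts of the core gadget — while their sizes stay at $(1+o(1))\log_{\frac1{1-p}}n$.

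This last requirement is where the hypothesis $p\ge\frac12$ is used: the common $G$-neighbourhood of two peripheral vertices inside $W$ has size $(1-o(1))p^2|W|\ge(\tfrac14-o(1))|W|$, and it is this lower bound on the overlap density that lets the gadget-embedding and pairwise-intersection estimates close at peripheral degree $(1+o(1))\log_{\frac1{1-p}}n$; for $p<\frac12$ the analogous estimate degrades, which is why the theorem is stated for $p\ge\frac12$ (and why the case in which the two largest parts are singletons — so that no vertex need be deleted and the gadget is simply a fixed complete multipartite graph — can be, and is, handled for all $p$ via the separate family $\mathcal{F}'$). The main obstacle I expect is reconciling conditions (i) and (ii): $H[W]$ must be rich enough that every large subset contains $K_{s_1,\dots,s_\ell}$ minus two vertices, yet structured enough that neither $H[W]$ itself, nor the core plus one peripheral vertex, nor the core plus two peripheral vertices joined through $W$, ever completes a full $K_{s_1,\dots,s_\ell}$; keeping careful track of which part each vertex would occupy is what makes this harder than in the clique case. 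Finally, the non-edges $uv$ with an endpoint in $W$ must also be saturated, but as there are only $O(n|W|)=o(n\log n)$ such pairs this costs nothing in the edge count and can be absorbed into the construction.
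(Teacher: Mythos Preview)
Your proposal has a genuine gap at precisely the point where the complete multipartite case diverges from cliques. You propose an empty periphery (``no $H$-edges inside $V(G)\setminus W$'') and you do note that ``the core plus two peripheral vertices joined through $W$'' must not complete $F$; but you provide no mechanism for preventing this, and the empty-periphery template cannot be made to work once $s_\ell\ge 2$. Take $F=K_{2,2,2}$. Your gadget for a peripheral edge $uv$ is a copy of $K_{1,1,2}$ in $H[W]$ on $\{a,b,c_1,c_2\}$ with $c_1,c_2\in N_H(u)\cap N_H(v)$, $a\in N_H(v)$, $b\in N_H(u)$. But $K_{1,1,2}\supseteq K_{2,2}$ (on $\{a,b\}\times\{c_1,c_2\}$), so $H[W]$ is forced to contain copies of $K_{2,2}$; and for \emph{any} two peripheral $u',v'$ with $\{a,b,c_1,c_2\}\subseteq N_H(u')\cap N_H(v')$, the six vertices $u',v',a,b,c_1,c_2$ already form $K_{2,2,2}$ in $H$ (place $u',v'$ in one part). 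If, as in the Kor\'andi--Sudakov construction you are emulating, you keep all $G$-edges between periphery and core, then a fixed $4$-set in $W$ lies in $N_H(v)$ for $\Theta(p^4 n)$ peripheral $v$, so such pairs $u',v'$ abound and $H$ is not $F$-free. The paper flags exactly this: ``one cannot consider an empty graph in $B$'' once parts are non-trivial. Your sketch offers no way around it; ``routine density counting in $G[W]$'' does not help, because the constraint is not on $H[W]$ alone but on how the peripheral neighbourhoods sit inside it.

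The idea you are missing is a carefully structured \emph{non-empty} periphery. In the paper one builds, inside $G[B_1]$, a sparse subgraph $H_{B_1}$ that is almost-$(s_2-1)$-regular, $K_{s_1,s_2-s_1+1}$-free, and --- this is the crucial extra feature --- every edge $\{x,y\}$ of $H_{B_1}$ satisfies $d_G(\{x,y\}\mid A_1)\ge (1+\Omega(\epsilon))\log_\rho n$. Then adding a missing edge $uv$ with $v\in B_1$ closes a $K_{1,s_2}$ in the periphery via the $s_2-1$ $H_{B_1}$-neighbours of $v$; the $s_2+1$ vertices of this star have a large common neighbourhood in $A_1$, in which one locates a copy of $K_{s_1-1,s_3,\dots,s_\ell}$ (using Lemma~\ref{l: ABlemma} on $H_{A_1}$); together these give $K_{s_1,\dots,s_\ell}$. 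Freeness is now easy because only $s_2-1$ periphery vertices can be adjacent to a common periphery vertex. The substantial work is constructing $H_{B_1}$ (Lemma~\ref{lemma:H_B_1-subgraph}): one bounds the independence number of an auxiliary ``large-codegree'' graph on $B_1$ by coupling it with a graph on the Hamming space $\{x\subseteq A_1:|x|\in I\}$ and running a ball-covering argument. This is also where $p\ge\tfrac12$ actually enters --- through the deviation estimate of Lemma~\ref{l: many neighbours}, which needs $\log_{1-p}p\le 1$ --- and not, as you suggest, merely because $p^2|W|\ge|W|/4$. (Minor slip: the case you allude to at the end is $s_1=s_2=1$, i.e.\ the two \emph{smallest} parts are singletons, not the two largest.)
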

\begin{remark}
Though it may be possible that the same result holds for constant $p<1/2$, our proof does not work in this case since one of its main ingredients, Lemma 2.2, fails to be true when $p<1/2$.
\end{remark} 

The second one defines a family of graphs for which the above is an asymptotic upper bound for all values of $p$. For two graphs $A$ and $B$, we say that a graph $B$ is \textit{$A$-degenerate}, if every two-vertex-connected subgraph of it is a subgraph of $A$. Furthermore, we say that a graph $F=(V,E)$ has the property $(\star)$ if there is an edge $\{u, v\}=e\in E$ such that for every independent set $I\subseteq V$, we have that $F[V\setminus I]$ is non-$F[V\setminus \{u,v\}]$-degenerate, that is, there exists a two-vertex-connected subgraph of $F[V\setminus I]$ which is not a subgraph of $F[V\setminus \{u,v\}]$.
\begin{theorem}\label{th: property star}
Let $0<p<1$ be a constant and let $F$ be a graph with property $(\star)$. Then \textbf{whp} $$\textit{sat}\left(G(n,p), F\right)\le \left(1+o(1)\right)n\log_{\frac{1}{1-p}}n.$$
\end{theorem}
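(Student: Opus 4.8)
\section*{Proof proposal for Theorem~\ref{th: property star}}

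The plan is to adapt the Kor\'andi--Sudakov construction for $K_s$, using property $(\star)$ as the combinatorial substitute for the elementary clique facts it exploits. Fix an edge $e=\{u,v\}\in E(F)$ witnessing property $(\star)$, write $F^-\defeq F[V(F)\setminus\{u,v\}]$, and set $U_1\defeq N_F(u)\setminus\{v\}$ and $U_2\defeq N_F(v)\setminus\{u\}$, which are subsets of $V(F^-)$. Fix a small constant $\varepsilon>0$ and let $m\defeq\lceil(1+\varepsilon)\log_{1/(1-p)}n\rceil$. The goal is to build, \textbf{whp}, an $F$-saturated subgraph $H\subseteq G\defeq G(n,p)$ of the following shape: there is a small set $A\subseteq V(G)$ (of order lower than $n\ln n$; in fact polylogarithmic) such that $H[A]$ is $F^-$-degenerate, the set $V(G)\setminus A$ is independent in $H$, and every $x\in V(G)\setminus A$ is joined in $H$ exactly to a set $D_x\subseteq N_G(x)\cap A$ with $|D_x|\le m$. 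Since an $F^-$-degenerate graph decomposes into a tree of blocks each on at most $|V(F^-)|$ vertices, it has $O(|A|)$ edges (implied constant depending on $F$), so such an $H$ satisfies $|E(H)|\le mn+O(|A|)=(1+\varepsilon+o(1))\,n\log_{1/(1-p)}n$; letting $\varepsilon\to 0$ gives the bound.

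$F$-freeness is then immediate from property $(\star)$: suppose $\psi\colon V(F)\to V(H)$ embeds $F$ into $H$ and set $I\defeq\psi^{-1}(V(G)\setminus A)$. Since $V(G)\setminus A$ is independent in $H$, the set $I$ is independent in $F$ (and $I\subsetneq V(F)$, as $F$ has no isolated vertices), and the remaining vertices of the copy lie in $A$, so $\psi$ restricts to an embedding of $F[V(F)\setminus I]$ into $H[A]$. But property $(\star)$ provides a two-vertex-connected subgraph of $F[V(F)\setminus I]$ that is not a subgraph of $F^-=F[V(F)\setminus\{u,v\}]$; this subgraph then embeds into $H[A]$, contradicting the $F^-$-degeneracy of $H[A]$.

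It remains to secure saturation, which is the crux and where the quantitative machinery of \cite{KS17} is imported. For a non-edge $\{x,y\}\in E(G)\setminus E(H)$ with $x,y\notin A$, it suffices to find a copy of $F^-$ in $H[A]$, say on a vertex set $W$ with isomorphism $\phi\colon V(F^-)\to W$, such that $\phi(U_1)\subseteq D_x$ and $\phi(U_2)\subseteq D_y$: then $x\mapsto u$, $y\mapsto v$, together with $\phi$ on the rest, is a copy of $F$ in $H+\{x,y\}$ using the new edge. So the problem reduces to designing the gadget $A$, the $F^-$-degenerate graph $H[A]$, and the assignment $x\mapsto D_x$ so that simultaneously (i) $H[A]$ remains $F^-$-degenerate, (ii) the copies of $F^-$ in $H[A]$ are ``robustly anchored'' — for \emph{every} pair $x,y$ the random sets $N_G(x)\cap A$ and $N_G(y)\cap A$ contain the $U_1$- and $U_2$-parts of a common copy of $F^-$ in $H[A]$, so that $D_x\supseteq\phi(U_1)$ and $D_y\supseteq\phi(U_2)$ can be chosen within the budget $m$ — and (iii) the few non-edges with an endpoint in $A$ are handled too, either by saturating inside $A$ or by reserving a handful of vertices of $V(G)\setminus A$ attached to $A$ in a controlled way. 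The main obstacle is (i) together with (ii): $F^-$-degeneracy forces $H[A]$ to be sparse, so the ``coordination'' of these anchored $F^-$-copies across all $\Theta(n^2)$ pairs must be carried out by a carefully structured polylogarithmic gadget (an arrangement of copies of $F^-$ plus the anchoring argument), exactly as \cite{KS17} do for $K_{s-2}$, with property $(\star)$ supplying the structural input that there came for free from the clique structure; the degree bound $|D_x|\le m$ is what pins the leading constant to $1$.
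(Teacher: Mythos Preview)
Your architecture has the right skeleton (a logarithmic set $A$, an independent complement $B$, and degrees $\approx m$ from $B$ into $A$), and your $F$-freeness argument is correct. The gap is in the condition you impose on $H[A]$. Requiring $H[A]$ to be $F^-$-degenerate is much stronger than what is needed for $F$-freeness: your own argument only uses that $H[A]$ contains no copy of $F[V(F)\setminus I]$ for any independent set $I$, i.e.\ that $H[A]$ is $\mathcal{F}$-free for $\mathcal{F}\defeq\{F[V(F)\setminus I]:I\text{ independent in }F\}$. The point of property~$(\star)$ is precisely that it is the hypothesis of Lemma~\ref{l: ABlemma} with $A=F^-$ and this family $\mathcal{F}$: it guarantees, inside $G[A_1]$, a spanning subgraph that is simultaneously $\mathcal{F}$-free \emph{and} $n^{-\delta}$-dense with respect to $F^-$. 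This is exactly how the paper proceeds --- one takes $B$ empty and $H_{A_1}$ from Lemma~\ref{l: ABlemma}, and then follows the Kor\'andi--Sudakov template (with the auxiliary sets $A_2,A_3$ for the few bad pairs). Saturation is then easy: for a typical pair $x,y\in B$, the common $G$-neighbourhood in $A_1$ is a constant fraction of $A_1$, and density furnishes a copy of $F^-$ there, entirely inside $N_H(x)\cap N_H(y)$, closing a copy of $F$ through the edge $\{x,y\}$. Since $|A_1|=O(\log n)$, even a complete graph on $A_1$ contributes $O(\log^2 n)$ edges, so there is no need to make $H[A_1]$ sparse.

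By contrast, your $F^-$-degeneracy forces $H[A]$ to have $O(|A|)$ edges, hence (by Tur\'an) an independent set of size $\Omega(|A|)$, so $H[A]$ can never be $\varepsilon$-dense with respect to $F^-$ once $F^-$ has an edge. Your appeal to ``exactly as \cite{KS17} do for $K_{s-2}$'' is where the plan breaks: the Kor\'andi--Sudakov gadget on $A$ is \emph{not} $K_{s-2}$-degenerate; it is $K_{s-1}$-free and $K_{s-2}$-dense (a dense $(s-2)$-partite-like graph), which is the $K_s$ instance of the $\mathcal{F}$-free/$F^-$-dense dichotomy above. Already for $F=K_4$ your scheme asks for a forest on $A$ such that $N_G(x)\cap N_G(y)\cap A$ contains a forest edge for all $\binom{n}{2}$ pairs; a short computation shows that with $|A|=\Theta(\log_\rho n)$ no forest achieves this with the required $o(n^{-2})$ failure probability. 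So the ``carefully structured polylogarithmic gadget'' you gesture at cannot exist under the $F^-$-degeneracy constraint; relaxing to $\mathcal{F}$-freeness and invoking Lemma~\ref{l: ABlemma} is the missing step.
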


Let us mention a family of graphs satisfying property $(\star)$, which could be of particular interest. Let $F$ be a graph with an edge $\{u, v\}$ such that there exists a proper colouring of $F$ with $\chi(F)$ colours, where $\{u\}$ and $\{v\}$ are distinct colour classes (see Figure \ref{f: property star}), and let us further suppose that for every independent set $I\subseteq V(F)$, $F[V(F)\setminus I]$ is two-vertex-connected. Then $F\setminus \{u,v\}$ is $\chi(F)-2$ colourable, whereas for any independent set $I$, $F[V\setminus I]$ requires at least $\chi(F)-1$ colours, and therefore $F$ satisfies property $(\star)$.
\begin{figure}[H]
\centering
\includegraphics[width=1.7in,height=1.7in]{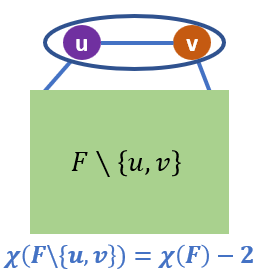}
\caption{An illustration of the edge $\{u,v\}$ and the remaining graph $F\setminus\{u,v\}$.}
\label{f: property star}
\end{figure}
Note that if, in addition, every edge of $F$ lies in a triangle, then by Theorem \ref{th: KS triangleS}, we obtain sharp asymptotics. That is, \textbf{whp} $\textit{sat}\left(G(n,p),F\right)=\left(1+o(1)\right)n\log_{\frac{1}{1-p}}n$. In particular, we obtain the following corollary.
\begin{corollary}\label{c: immediate}
    Let $0<p<1$ be a constant. Let $\ell\ge3$ and $s_3,\ldots, s_{\ell}\geq 1$ be integers. Let $F=K_{1,1,s_3,\ldots, s_{\ell}}$. Then \textbf{whp}
    \begin{align*}
        \textit{sat}\left(G(n,p),F\right)=\left(1+o(1)\right)n\log_{\frac{1}{1-p}}n.
    \end{align*}
\end{corollary}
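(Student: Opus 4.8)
The plan is to obtain Corollary~\ref{c: immediate} directly from Theorems~\ref{th: property star} and \ref{th: KS triangleS}, by checking that $F=K_{1,1,s_3,\ldots,s_\ell}$ simultaneously has property $(\star)$ and has the feature that every one of its edges lies in a triangle. Granting the two theorems, this is the entire content of the corollary: Theorem~\ref{th: property star} yields the \textbf{whp} upper bound $\textit{sat}(G(n,p),F)\le(1+o(1))\,n\log_{\frac{1}{1-p}}n$, Theorem~\ref{th: KS triangleS} the matching \textbf{whp} lower bound, and the intersection of two \textbf{whp} events is again a \textbf{whp} event.

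First I would set up the data for property $(\star)$. Write the vertex parts of $F$ as $\{u\}$, $\{v\}$, $P_3,\ldots,P_\ell$ with $|P_i|=s_i\ge 1$, and take $e=\{u,v\}$, which is an edge since $u$ and $v$ lie in distinct parts. Colouring each part by its own colour is a proper colouring with $\ell=\chi(F)$ colours in which $\{u\}$ and $\{v\}$ are distinct colour classes, so $F$ has an edge with the colouring feature described in the paragraph preceding the corollary, and $\chi(F[V\setminus\{u,v\}])=\ell-2$. Now let $I\subseteq V(F)$ be independent; then $I$ lies inside a single part, so $F[V\setminus I]$ is again complete multipartite, with at least $\ell-1$ non-empty parts, and hence $\chi(F[V\setminus I])\ge\ell-1>\ell-2=\chi(F[V\setminus\{u,v\}])$, so $F[V\setminus I]$ is not a subgraph of $F[V\setminus\{u,v\}]$. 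When $\ell\ge 4$, $F[V\setminus I]$ has at least three parts and so is itself two-vertex-connected (any complete multipartite graph on at least three parts is), which is exactly the sufficient condition stated before the corollary; thus $F[V\setminus I]$ is a two-vertex-connected subgraph of itself not embedding into $F[V\setminus\{u,v\}]$. When $\ell=3$, the only independent sets $I$ for which $F[V\setminus I]$ fails to contain a triangle are $\{u\}$, $\{v\}$, and $P_3$; for each of these $F[V\setminus\{u,v\}]=\overline{K_{s_3}}$ is edgeless while $F[V\setminus I]$ still has two parts, so an edge of $F[V\setminus I]$ already serves as a two-vertex-connected subgraph not contained in $F[V\setminus\{u,v\}]$. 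In all cases $F$ has property $(\star)$.

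For the triangle condition, every edge of $F$ joins two distinct parts $P_i,P_j$, and since $\ell\ge 3$ there is a further non-empty part $P_k$, any vertex of which is adjacent to both endpoints; hence every edge of $F$ lies in a triangle. Applying Theorem~\ref{th: property star} together with Theorem~\ref{th: KS triangleS}, and intersecting the two \textbf{whp} events, then completes the argument. The only delicate point is the $\ell=3$ case of property $(\star)$: here $F$ does \emph{not} satisfy the clean hypothesis ``$F[V\setminus I]$ is two-vertex-connected for every independent $I$'', because $F[V\setminus\{u\}]$ is just the star $K_{1,s_3}$; one must instead exploit that $F[V\setminus\{u,v\}]$ has no edges at all, which makes the required non-degeneracy immediate. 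Everything else is elementary and routine.
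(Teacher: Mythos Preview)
Your proposal is correct and follows exactly the approach the paper uses: verify that $F=K_{1,1,s_3,\ldots,s_\ell}$ has property $(\star)$ and that every edge of $F$ lies in a triangle, then combine Theorem~\ref{th: property star} with Theorem~\ref{th: KS triangleS}. In fact you are more careful than the paper, which derives the corollary from the sufficient condition in the preceding paragraph (that $F[V\setminus I]$ be two-vertex-connected for every independent $I$); as you observe, for $\ell=3$ and $s_3\ge 2$ this cleaner hypothesis fails (e.g.\ $F[V\setminus\{u\}]=K_{1,s_3}$ is a star), and one must verify property $(\star)$ directly using that $F[V\setminus\{u,v\}]$ is edgeless.
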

Observe that Theorem \ref{th: complete mp} provides the same asymptotic as Corollary \ref{c: immediate} for any multipartite graph $F$, that is, without the requirement that $s_1=s_2=1$, however only for $p\in \left[\frac{1}{2},1\right)$.

Theorem \ref{th: KS triangleS} together with Theorems \ref{th: complete mp} and \ref{th: property star} suggest that one may make a more ambitious claim in Conjecture \ref{conjecture}, stating that if every edge of $F$ lies in a triangle, then \textbf{whp} $\textit{sat}\left(G(n,p),F\right)=\left(1+o(1)\right)n\log_{\frac{1}{1-p}}n$ (see more on that in Section \ref{s: discussion}).

\subsection{Main methods and organisation}
Let us begin with some conventions that will be useful for us throughout the paper. Given two graphs $H\subseteq G$, and a graph $F$, we say that an edge $e\in E(G)$ is \textit{completed by} $H$ (or that $H$ completes $e$), if there is a subgraph of $H'\subseteq H$ such that $e\cup H'\cong F$. In this case, we also say that $e$ \textit{completes} a copy of $F$. Furthermore, we use the notion of an $\mathcal{F}$\textit{-saturated} graph for a family of graphs $\mathcal{F}$. We say that a graph $H$ is $\mathcal{F}$-saturated in $G$ if $H$ does not contain a copy of \textit{any} $F\in\mathcal{F}$, yet adding an edge $e\in E(G)\setminus E(H)$ closes a copy of\textit{ at least one} $F\in\mathcal{F}$.

The structure of the paper is as follows. In Section \ref{s: lemmas} we present and establish several lemmas which we will use throughout the paper. 

In Section \ref{s: global} we prove Theorems \ref{th: global 1} and \ref{th: global 2}. Both proofs rely on a construction of an $\mathcal{F}$-saturated subgraph of $G(n,p)$ given in the proof of Lemma \ref{l: bipartite}. The proof of Theorem \ref{th: global 2} follows rather immediately from the above-mentioned construction, whereas the proof of Theorem \ref{th: global 1} utilises an inductive argument based on Lemma \ref{l: bipartite}.

In Section \ref{s: sharp} we prove Theorems \ref{th: complete mp} and \ref{th: property star}. The proof of Theorem \ref{th: property star} utilises the construction of \cite{KS17}, refined with the following lemma. For every two graphs $G$ and $A$, and $\varepsilon>0$, we say that a graph $G$ is $\varepsilon$\textit{-dense} with respect to $A$ if every induced subgraph of $G$ on at least $\varepsilon|V(G)|$ vertices contains a copy of $A$. Finally, given a family of graphs $\mathcal{F}$, we say that $G$ is $\mathcal{F}$-free if it does not contain a subgraph isomorphic to $F$ for every $F\in \mathcal{F}$. We will make use of the following result:
\begin{lemma}\cite[Theorem 2.1 and Remark 2]{DHKZ23}\label{l: ABlemma}
Let $p\in (0,1)$. Let $A$ be a graph and let $\mathcal{F}$ be a family of graphs such that every $F\in \mathcal{F}$ is non-$A$-degenerate. Then for every sufficiently small $\delta>0$, \textbf{whp} there is a spanning subgraph in $G(n,p)$ which is $\mathcal{F}$-free and $n^{-\delta}$-dense with respect to $A$.
\end{lemma}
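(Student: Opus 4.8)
The plan is to reduce the statement to a structured packing problem and then invoke a random‑graph analogue of the theory of high‑girth combinatorial designs.

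Since $\mathcal{F}$ is finite, for each $F\in\mathcal{F}$ fix a subgraph $B_F\subseteq F$ that is $2$‑connected and is not a subgraph of $A$; such $B_F$ exists precisely because $F$ is non‑$A$‑degenerate. As any graph that contains a copy of $F$ also contains a copy of $B_F$, it suffices to produce a spanning $H\subseteq G(n,p)$ that avoids the finite family $\mathcal{B}:=\{B_F:F\in\mathcal{F}\}$ of $2$‑connected graphs and is $n^{-\delta}$‑dense with respect to $A$; we may also discard the isolated vertices of $A$. The small cases are easy: if $A$ has no edge then the empty spanning subgraph works, and if $A=K_2$ then the graph obtained by running the $\mathcal{B}$‑free process inside $G(n,p)$ is \textbf{whp} $\mathcal{B}$‑free with independence number $n^{1/m_2(\mathcal{B})+o(1)}<n^{1-\delta}$ for small $\delta$, using that $m_2(\mathcal{B})=\min_{B\in\mathcal{B}}m_2(B)>1$ since every $B\in\mathcal{B}$ is $2$‑connected. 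So assume $a:=|V(A)|\ge 3$ and $A$ has an edge.

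For the main case I would set $g:=1+\max_{F\in\mathcal{F}}e(F)$ and show that \textbf{whp} $G(n,p)$ contains a near‑perfect \emph{linear} $A$‑packing $\mathcal{A}$ — a family of copies of $A$, any two sharing at most one vertex, covering all but $o(n^2)$ pairs of vertices — that is moreover (i) \emph{high‑girth}: the $a$‑uniform hypergraph on $V(G)$ whose edges are the vertex sets of the copies has no Berge cycle of length at most $g$; and (ii) \emph{pseudorandom}: no set of at least $n^{1-\delta}$ vertices avoids the vertex sets of all copies in $\mathcal{A}$ (the largest such ``$A$‑independent'' set being typically of order $n^{1-1/(a-1)+o(1)}$, which is below $n^{1-\delta}$ once $\delta<1/(a-1)$). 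Then $H$ is taken to be the union of all copies in $\mathcal{A}$, together with the uncovered vertices so that it spans. The verification is then short. Since any two copies share at most one vertex, each edge of $H$ lies in a unique copy, so an embedded copy of a $B\in\mathcal{B}$ uses at most $e(B)<g$ copies; by (i) these span a Berge‑acyclic sub‑hypergraph, hence their union is an iterated one‑vertex gluing of copies of $A$, so every block of it is isomorphic to a block of $A$. As $B$ is $2$‑connected it lies inside one such block, giving $B\subseteq A$, a contradiction; thus $H$ is $\mathcal{F}$‑free. For density, any $S$ with $|S|\ge n^{1-\delta}$ contains the vertex set of some $c\in\mathcal{A}$ by (ii), so $H[S]\supseteq c\cong A$.

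The hard part is the construction itself: obtaining inside $G(n,p)$ a near‑perfect linear $A$‑packing that is \emph{simultaneously} high‑girth and pseudorandom. Transferring from $K_n$ to $G(n,p)$ is routine for constant $p$, since $G(n,p)$ is quasirandom and has $\Theta(n^{a-2})$ copies of $A$ through every pair; the real content is building the design, which I would do by the iterative‑absorption / nibble method developed for high‑girth designs (as in the resolution of Erd\H{o}s's high‑girth Steiner systems problem) — simultaneously rejecting copies that would close a short Berge cycle and keeping the local distribution of copies regular enough to bound the $A$‑independence number. This is precisely where the hypothesis that each $F$ is non‑$A$‑degenerate enters: it ensures that producing a forbidden subgraph would require a short cyclic dependency among the packed copies of $A$, which high girth forbids, whereas the density with respect to $A$ is unaffected by imposing girth.
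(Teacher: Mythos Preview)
This lemma is not proved in the paper at all; it is quoted from \cite{DHKZ23} without argument, so there is no in-paper proof to compare your sketch against.

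On its own merits: your reduction to a finite family $\mathcal{B}$ of $2$-connected obstructions and the two verification steps are correct. In particular, the block-tree argument --- that in a Berge-acyclic union of copies of $A$ glued pairwise in at most one vertex every $2$-connected subgraph lies inside a single copy, hence is a subgraph of $A$ --- is exactly the right mechanism, and the density check via your property~(ii) is clean (note that the ``near-perfect'' covering of pairs plays no role; only~(ii) is used). The gap is that the entire content of the lemma is the construction you defer. Producing, inside $G(n,p)$, a linear $A$-packing that simultaneously has Berge-girth exceeding $g$ and hypergraph independence number below $n^{1-\delta}$ is not a corollary of existing work: the Kwan--Sah--Sawhney--Simkin argument is tailored to $K_a$-decompositions of $K_n$, and adapting it to packings of an arbitrary fixed graph $A$ inside a random host while also controlling independent sets is precisely the work that the cited reference carries out. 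Saying ``use the nibble/iterative absorption as in the high-girth Steiner system proof'' is a plan, not a proof. Two smaller points: you tacitly assume $\mathcal{F}$ is finite when setting $g=1+\max_F e(F)$, which the lemma does not (though every application in this paper has finite $\mathcal{F}$); and for the case $A=K_2$ you invoke an independence-number bound for the $\mathcal{B}$-free process for an arbitrary finite family of $2$-connected graphs, which is not available in the literature in that generality --- a high-girth/high-chromatic-number construction inside $G(n,p)$ would be a safer route there.
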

We note that in \cite{DHKZ23} it was shown that the condition on $\mathcal{F}$ in the above lemma is not only sufficient, but also necessary. In particular, this implies some limitations in the results that can be obtained using the construction of \cite{KS17} (see Section \ref{s: discussion} for more details on the matter). 

The proof of Theorem \ref{th: complete mp} is more delicate, and is the most involved in this paper. Let us briefly outline the key ideas in the construction, while comparing them with the construction of \cite{KS17} for cliques. 

Very roughly, in \cite{KS17}, one takes a set $A$ on $\Theta(\log n)$ vertices, and let $B\coloneqq [n]\setminus A$. One can then find in $A$ a spanning subgraph of $G(n,p)$ that is $K_{s-1}$-free, but $(1/\ln^3|A|)$-dense with respect to $K_{s-2}$, denote this subgraph by $A'$. Let $H$ be the subgraph of $G(n,p)$ with all the edges between $A$ and $B$, all the edges of the subgraph $A'$, and no edges inside $B$. Then, typically almost any pair of vertices $u, v$ in $B$ is likely to have many common neighbours in $A$ and one can find a copy of $K_{s-2}$ in $A'$ induced by this common neighbourhood, such that together with the edge $\{u,v\}$ they close a copy of $K_{s}$, while on the other hand the graph is $K_{s}$-free since $A'$ is $K_{s-1}$-free and $B$ is empty. 

Thus, in the case of a complete graph, one is mainly concerned with the property that the endpoints of every edge in $G(n,p)[B]$ have a large common neighbourhood in $A$. However, in the case of a complete $r$-partite graph $F$ with all parts being non-trivial, one cannot assume that in every large induced subgraph of $A'$ there is a copy of $F$ minus an edge, as otherwise it is easy to see that our graph is not $F$-free. In particular, one cannot consider an empty graph in $B$. Moreover, a suitable subgraph that we take in $B$ should satisfy the property that every vertex and its $B$-neighbours have a large common neighbourhood in $A$. In order to show the likely existence of such a subgraph in $B$, we use a coupling with an auxiliary random graph in a Hamming space and prove a tight bound for its independence number using a covering-balls argument (see Lemma \ref{claim:bound-on-alpha-G_W} and the paragraph after its statement).

Finally, in Section \ref{s: discussion} we discuss the obtained results and some of their limitations, and mention some questions as well as open problems.

\section{Preliminary lemmas}\label{s: lemmas}
Given a graph $H$ and a vertex $v\in V(H)$, we denote by $N_H(v)$ the neighbourhood of $v$ in $H$ and by $d_H(v)=|N_H(v)|$. Given a subset $S\subseteq V(H)$, we denote by $N_H(S)$ the common neighbourhood of all $v\in S$ in $H$ and by $d_H(S)=|N_H(S)|$. That is, $N_H(S)\coloneqq \bigcap_{v\in S}N_H(v)$. Finally, given subsets $S_1,S_2\subseteq V(H)$, we denote by $N_H(S_1| S_2)$ the common neighbourhood of $S_1$ in $S_2$ in the graph $H$ and by $d_H(S_1|S_2)=|N_H(S_1|S_2)|$. That is, $N_H(S_1| S_2)\coloneqq N_H(S_1)\cap S_2$. When the graph $H$ is clear from context, we may omit the subscript. We denote by $K_s^{\ell}$ the complete $\ell$-partite graph where each part is of size $s$. We omit rounding signs for the sake of clarity of presentation.

We will make use of the following bounds on the tail of binomial distribution (see, for example, \cite[Theorem 2.1]{JLR00}, \cite[Theorem A.1.12]{AS16}, and \cite[Claim 2.1]{KS17}).
\begin{lemma}\label{l: chernoff}
Let $N\in \mathbb{N}$, $p\in [0,1]$, and $X\sim Bin(N,p)$. Then, for $0\le a \le Np$ and for $b\ge 0$,
\begin{align}
    &\mathbb{P}\left(|X-Np|\ge a\right)\le 2\exp\left(-\frac{a^2}{3Np}\right),\label{l: chernoff 1}\\
    &\mathbb{P}(X > b Np) \le \left(\frac{e}{b}\right)^{b N p},\label{l: chernoff 2}\\
    &\mathbb{P}\left(X\le \frac{N}{\ln^2N}\right)\le (1-p)^{N-\frac{N}{\ln N}}.\label{l: chernoff 3}
\end{align}
\end{lemma}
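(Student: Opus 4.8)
All three bounds are classical; \eqref{l: chernoff 1} and \eqref{l: chernoff 2} are quoted directly from the cited sources, so I would only recall their one-line derivations, while \eqref{l: chernoff 3} requires a short computation. For \eqref{l: chernoff 2} the plan is to apply the exponential Markov inequality: for every $\lambda>0$,
\[
\mathbb{P}(X> bNp)\le e^{-\lambda bNp}\,\mathbb{E}\bigl[e^{\lambda X}\bigr]=e^{-\lambda bNp}\bigl(1-p+pe^{\lambda}\bigr)^{N}\le\exp\!\bigl(-\lambda bNp+Np(e^{\lambda}-1)\bigr),
\]
using $1+x\le e^{x}$. If $b\le 1$ then the right-hand side of \eqref{l: chernoff 2} is already at least $1$; otherwise I would substitute the optimal choice $\lambda=\ln b>0$ to obtain $\exp\!\bigl(Np(b-1-b\ln b)\bigr)\le\exp\!\bigl(bNp(1-\ln b)\bigr)=(e/b)^{bNp}$.

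For \eqref{l: chernoff 1} I would invoke the standard two-sided Bernstein estimates (\cite[Theorem~2.1]{JLR00}, \cite[Theorem~A.1.12]{AS16}), namely $\mathbb{P}(X\ge Np+a)\le\exp\!\bigl(-\tfrac{a^{2}}{2(Np+a/3)}\bigr)$ and $\mathbb{P}(X\le Np-a)\le\exp\!\bigl(-\tfrac{a^{2}}{2Np}\bigr)$. Since $0\le a\le Np$ we have $Np+a/3\le\tfrac{4}{3}Np$, so each of the two tails is at most $\exp\!\bigl(-\tfrac{a^{2}}{3Np}\bigr)$, and adding the two one-sided estimates introduces the factor $2$.

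The remaining inequality \eqref{l: chernoff 3} is the only one needing work. Set $k\defeq N/\ln^{2}N$ and assume $N$ is large (for small $N$ the exponent $N-N/\ln N$ is non-positive, so the right-hand side is at least $1$ and nothing is to prove). First I would note that for $0\le j\le k$,
\[
\frac{\binom{N}{j+1}p^{j+1}(1-p)^{N-j-1}}{\binom{N}{j}p^{j}(1-p)^{N-j}}=\frac{N-j}{j+1}\cdot\frac{p}{1-p}\ge 1,
\]
because $(N-j)/(j+1)\ge(1-o(1))\ln^{2}N\to\infty$ while $p/(1-p)$ is a positive constant; hence the summands of $\mathbb{P}(X\le k)=\sum_{j\le k}\binom{N}{j}p^{j}(1-p)^{N-j}$ are non-decreasing on $\{0,\dots,k\}$, and $\mathbb{P}(X\le k)\le(k+1)\binom{N}{k}p^{k}(1-p)^{N-k}\le(k+1)(eNp/k)^{k}(1-p)^{N-k}$. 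Writing $(1-p)^{N-k}=(1-p)^{\,N-N/\ln N}\,(1-p)^{\,N/\ln N-k}$ with $N/\ln N-k=\tfrac{N}{\ln N}\bigl(1-\tfrac{1}{\ln N}\bigr)$, it remains to check that $(k+1)(eNp/k)^{k}\le(1-p)^{-(N/\ln N-k)}$. Taking logarithms, the left-hand side equals $\ln(k+1)+k\bigl(\ln(ep)+2\ln\ln N\bigr)=O(N\ln\ln N/\ln^{2}N)=o(N/\ln N)$, whereas the logarithm of the right-hand side is $(1-o(1))\tfrac{N}{\ln N}\ln\tfrac{1}{1-p}=\Theta(N/\ln N)$ since $\ln\tfrac{1}{1-p}>0$ is a constant; thus the inequality holds once $N$ is large enough.

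The only genuine obstacle is \eqref{l: chernoff 3}. One must confirm that the monotonicity of the binomial terms really extends all the way up to index $k=N/\ln^{2}N$ — it does because $k$ lies far below the mean $Np$ — and then that the polynomial-in-$N$ and $(\ln N)^{\Theta(N/\ln^{2}N)}$ correction factors are swallowed by the $\exp(-\Theta(N/\ln N))$ gain obtained by replacing the exponent $N-k$ with $N-N/\ln N$ in the power of $1-p$; the finitely many small values of $N$ are disposed of by the triviality of the claim there.
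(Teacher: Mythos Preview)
The paper does not give a proof of this lemma at all; it is stated with citations to \cite[Theorem~2.1]{JLR00}, \cite[Theorem~A.1.12]{AS16}, and \cite[Claim~2.1]{KS17}, and the three inequalities are used as black boxes thereafter. Your derivations of \eqref{l: chernoff 1} and \eqref{l: chernoff 2} are the standard ones, and your argument for \eqref{l: chernoff 3} --- bounding the lower tail by $(k+1)$ times the top term and then absorbing the $(eNp/k)^{k}$ factor into the slack $(1-p)^{-(N/\ln N-k)}$ --- is correct.

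One small caveat worth recording: your proof of \eqref{l: chernoff 3} tacitly treats $p\in(0,1)$ as a constant independent of $N$ (you use that $p/(1-p)$ is a positive constant and that $\ln\tfrac{1}{1-p}=\Theta(1)$). This is exactly the regime in which the paper invokes the bound, and indeed the inequality as literally stated cannot hold uniformly over $p\in[0,1]$ --- for instance with $p=1/N$ the left-hand side tends to $1$ while the right-hand side tends to $e^{-1}$. So your implicit hypothesis matches the intended (and cited) statement, even if the lemma's preamble in the paper is phrased a little loosely.
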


We will require the following probabilistic lemma which shows that large enough sets are very likely to have a vertex whose number of neighbours in this set deviates largely from the expectation.

For every $p\in (0,1)$, we set
\begin{align}
    \rho(p)\coloneqq \frac{1}{1-p}.
\end{align}
When the choice of $p$ is clear, we may abbreviate $\rho\coloneqq \rho(p)$.
\begin{lemma}\label{l: many neighbours}
Let $p\in \left[\frac{1}{2}, 1\right)$, and let $G\sim G(n,p)$. For every $\gamma\ge 0$, there exists a sufficiently small $\epsilon>0$ such that the following holds. Let $X$ and $Y$ be disjoint sets of vertices in $V(G)$ of sizes $(1+\epsilon)\log_{\rho}n$ and at least $\frac{n}{\ln^3n}$, respectively. Then, 
\begin{align*}
    \mathbb{P}\left(\forall y\in Y\ d(y|X) < \left(1+(1-\gamma)\epsilon\right)\log_{\rho}n\right)\le \exp\left(-n^{\epsilon}\right).
\end{align*}
\end{lemma}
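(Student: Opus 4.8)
First I would exploit independence across $Y$. For distinct $y,y'\in Y$ the edges from $y$ to $X$ and the edges from $y'$ to $X$ form disjoint sets, so the variables $d(y\mid X)$, $y\in Y$, are mutually independent, each distributed as $\mathrm{Bin}(m,p)$ with $m\defeq(1+\epsilon)\log_\rho n$. Writing $t\defeq(1+(1-\gamma)\epsilon)\log_\rho n$ (omitting rounding signs, as elsewhere in the paper; for $\epsilon$ small we have $0\le t\le m$ and $m-t=\gamma\epsilon\log_\rho n$) and $q\defeq\mathbb{P}\big(\mathrm{Bin}(m,p)\ge t\big)$, we get
\[
\mathbb{P}\big(\forall y\in Y:\ d(y\mid X)<t\big)=(1-q)^{|Y|}\le\exp(-q\,|Y|)\le\exp\!\big(-q\,n/\ln^3 n\big),
\]
so it suffices to prove $q\ge n^{2\epsilon-1}$ for $\epsilon$ chosen small enough in terms of $\gamma$ and $p$: then $q\,n/\ln^3 n\ge n^{2\epsilon}/\ln^3 n\ge n^\epsilon$ for $n$ large, and we are done. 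In short, the whole lemma reduces to a lower bound on a single binomial upper tail.

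Next I would bound $q$ from below by keeping only its $k=t$ summand and estimating it crudely:
\[
q\ \ge\ \binom{m}{t}p^{t}(1-p)^{m-t}=\binom{m}{m-t}p^{t}(1-p)^{m-t}\ \ge\ \Big(\tfrac{m}{m-t}\Big)^{m-t}p^{t}(1-p)^{m-t}
\]
(when $\gamma=0$ this is simply $q=p^{m}$). Now convert each factor into a power of $n$ using $\rho^{\log_\rho n}=n$: since $1-p=\rho^{-1}$ and $m-t=\gamma\epsilon\log_\rho n$ we get $(1-p)^{m-t}=n^{-\gamma\epsilon}$; writing $c\defeq\log_\rho(1/p)$, so that $p=\rho^{-c}$, we get $p^{t}=n^{-c(1+(1-\gamma)\epsilon)}$; and $\big(\tfrac{m}{m-t}\big)^{m-t}=\big(\tfrac{1+\epsilon}{\gamma\epsilon}\big)^{\gamma\epsilon\log_\rho n}=n^{\gamma\epsilon\log_\rho\frac{1+\epsilon}{\gamma\epsilon}}$. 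Hence $q\ge n^{E}$ with
\[
E\ =\ -c\,(1+(1-\gamma)\epsilon)\ -\ \gamma\epsilon\ +\ \gamma\epsilon\,\log_\rho\tfrac{1+\epsilon}{\gamma\epsilon}.
\]

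The crucial input — and precisely where the hypothesis $p\ge\tfrac12$ enters (and why the statement fails for $p<\tfrac12$) — is that $p\ge\tfrac12$ gives $\tfrac1p\le2=\tfrac{1}{1-1/2}\le\tfrac{1}{1-p}$, hence $c\le1$. Therefore $-c(1+(1-\gamma)\epsilon)\ge-(1+(1-\gamma)\epsilon)$, and using $(1-\gamma)\epsilon+\gamma\epsilon=\epsilon$,
\[
E\ \ge\ -1-(1-\gamma)\epsilon-\gamma\epsilon+\gamma\epsilon\log_\rho\tfrac{1+\epsilon}{\gamma\epsilon}\ =\ -1-\epsilon+\gamma\epsilon\log_\rho\tfrac{1+\epsilon}{\gamma\epsilon}.
\]
For fixed $\gamma>0$ we have $\gamma\log_\rho\tfrac{1+\epsilon}{\gamma\epsilon}\to\infty$ as $\epsilon\to0^{+}$, so choosing $\epsilon$ small enough (concretely $\epsilon\le 1/(\gamma\rho^{3/\gamma}-1)$) makes $\gamma\epsilon\log_\rho\tfrac{1+\epsilon}{\gamma\epsilon}\ge3\epsilon$, whence $E\ge2\epsilon-1$ and $q\ge n^{2\epsilon-1}$, as required. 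In the degenerate case $\gamma=0$ one has $t=m$ and $q=p^{m}=n^{-c(1+\epsilon)}$, which still exceeds $n^{2\epsilon-1}$ for $\epsilon$ small provided $c<1$, i.e. provided $p>\tfrac12$; I would just record that this corner of the parameter space needs $\gamma>0$ or $p>\tfrac12$.

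The main obstacle is the bookkeeping in the last two displays: one has to check that after invoking $c\le1$ the surviving constant term is exactly $-1$, and that this is then beaten, with room to spare, by the positive ``entropy'' gain $\gamma\epsilon\log_\rho\tfrac{1+\epsilon}{\gamma\epsilon}$ coming from $\binom{m}{m-t}$. Everything else — the independence reduction, the $1-q\le e^{-q}$ union bound, and retaining a single term of the binomial sum — is routine, and the estimate is automatically uniform over all admissible $X$ and $Y$ since it depends only on $|X|$ and on the lower bound $|Y|\ge n/\ln^3n$.
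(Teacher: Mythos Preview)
Your proposal is correct and follows essentially the same route as the paper: reduce via independence to a single-vertex lower tail, keep the $k=t$ term of the binomial, bound $\binom{m}{m-t}\ge(m/(m-t))^{m-t}$, convert to powers of $n$, invoke $\log_{1-p}p\le 1$ (equivalently your $c\le 1$) from $p\ge\tfrac12$, and choose $\epsilon$ small so the entropy gain $\gamma\epsilon\log_\rho\tfrac{1+\epsilon}{\gamma\epsilon}\ge 3\epsilon$ beats the loss. Your added remark that the corner case $\gamma=0$, $p=\tfrac12$ genuinely fails is a correct and useful observation the paper does not make explicit (the lemma is only applied with $\gamma>0$).
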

\begin{proof}
Note that for every vertex $y\in Y$, the number of neighbours of $y$ in $X$ in the graph $G$ is distributed according to $Bin(|X|,p)$. Hence, for a fixed $y\in Y$,
\begin{align*}
    \mathbb{P}\Big(d(y|X) \ge  &\left(1+(1-\gamma)\epsilon\right)\log_{\rho}n\Big)\ge \mathbb{P}\left(d(y|X)=\left(1+(1-\gamma)\epsilon\right)\log_{\rho}n\right)\\
    &=\binom{|X|}{\left(1+(1-\gamma)\epsilon\right)\log_{\rho}n}p^{\left(1+(1-\gamma)\epsilon\right)\log_{\rho}n}(1-p)^{\gamma\epsilon\log_{\rho}n}\\
    &\ge \left(\frac{(1+\epsilon)(1-p)}{\gamma\epsilon}\right)^{\gamma\epsilon\log_{\rho}n}p^{\left(1+(1-\gamma)\epsilon\right)\log_{\rho}n}\\
    &\ge (1-p)^{\gamma \epsilon \log_{1-p} \left(\frac{1+\epsilon}{\gamma \epsilon}\right) \log_{\rho} n + \gamma\epsilon\log_{\rho}n} (1-p)^{ (\log_{1-p} p) \left(1+(1-\gamma)\epsilon\right)\log_{\rho}n} \\
    &= n^{-\gamma \epsilon \log_{1-p} \left(\frac{1+\epsilon}{\gamma \epsilon}\right) - (\log_{1-p} p) \left(1+(1-\gamma)\epsilon\right) - \gamma \epsilon}.
\end{align*}

Since $p \ge \frac{1}{2}$, we have that $\log_{1-p} p \le 1$ and thus
\begin{align*}
    \mathbb{P}\Big(d(y|X) \ge  \left(1+(1-\gamma)\epsilon\right)\log_{\rho}n\Big) 
    &\ge n^{-\gamma \epsilon \log_{1-p} \left(\frac{1+\epsilon}{\gamma \epsilon}\right) - 1 - \epsilon} \\
    &\ge n^{-1 + 2\epsilon},
\end{align*}
where the last inequality is true since for sufficiently small $\epsilon$ we have $-\gamma \epsilon \log_{1-p} \left(\frac{1+\epsilon}{\gamma \epsilon}\right) \ge 3\epsilon$.

Therefore, since $d(y|X)$ are independent for $y\in Y$, we obtain that
\begin{align*}
    \mathbb{P}\left(\forall y\in Y\ d(y|X)\le \left(1+(1-\gamma)\epsilon\right)\log_{\rho}n\right)&\le\left(1-n^{-1+2\epsilon}\right)^{|Y|}\\
    &\le \exp\left(-n^{\epsilon}\right),
\end{align*}
where in the last inequality we used our assumption that $|Y|\ge \frac{n}{\ln^3n}$.
\end{proof}

We will also utilise the fact that random graphs typically have relatively small chromatic number (see, for example, Chapter 7 in \cite{FK16}):
\begin{lemma}\label{l: chromatic number}
    Let $0<p<1$ be a constant. Then \textbf{whp} $\chi(G(n,p))=O\left(\frac{n}{\ln n}\right)$.
\end{lemma}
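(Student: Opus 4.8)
The statement to prove is Lemma~\ref{l: chromatic number}: that \textbf{whp} $\chi(G(n,p)) = O(n/\ln n)$ for constant $p \in (0,1)$.

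\medskip

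\textbf{Proof proposal.} The plan is to exhibit a proper colouring of $G(n,p)$ with $O(n/\ln n)$ colours by a standard greedy-extraction argument: repeatedly pull out a large independent set, colour it with a fresh colour, and show that after $O(n/\ln n)$ rounds every vertex has been coloured. The key quantitative input is that \textbf{whp} every induced subgraph of $G(n,p)$ on at least $m$ vertices contains an independent set of size $\Omega(\ln m)$, for $m$ not too small. First I would fix $q = 1-p \in (0,1)$ and set the target independent-set size to be roughly $k = k(m) \coloneqq (1-o(1))\log_{1/q} m = \Theta(\ln m)$; the classical first-moment estimate gives that a fixed set of $k$ vertices is independent with probability $q^{\binom{k}{2}}$, so the expected number of independent $k$-sets in an $m$-vertex induced subgraph is $\binom{m}{k} q^{\binom{k}{2}}$, which is large (tends to infinity) precisely when $k$ is a bit below $2\log_{1/q} m$. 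To turn "expectation large" into "exists \textbf{whp}", I would either invoke the standard second-moment/concentration bound for the independence number of $G(m,p)$ (e.g.\ Chapter~7 of \cite{FK16}, to which the paper already points), or simply union-bound: the probability that \emph{some} $m$-subset of $[n]$ (with $m \geq n/\ln^2 n$, say) induces a subgraph with independence number less than $k(m)$ is at most $\binom{n}{m}$ times the failure probability for a single such subgraph, and the latter is super-polynomially small (again by the concentration results for $\alpha(G(m,p))$), so the union bound over all such $m$ and all $m$-subsets still goes to $0$.

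\medskip

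Granting that event, I would then run the extraction: maintain a set $U$ of uncoloured vertices, initially $U = V(G)$; as long as $|U| \geq n/\ln^2 n$, the induced subgraph $G[U]$ contains an independent set of size at least $k(|U|) \geq k(n/\ln^2 n) = \Omega(\ln n)$ — say at least $c\ln n$ for some constant $c = c(p) > 0$ — which we remove from $U$ as a new colour class. Each such round removes $\Omega(\ln n)$ vertices, so after at most $n / (c \ln n) = O(n/\ln n)$ rounds the set $U$ has shrunk below $n/\ln^2 n$. At that point we simply give each of the remaining $\leq n/\ln^2 n$ vertices its own colour, costing a further $O(n/\ln^2 n) = o(n/\ln n)$ colours. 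Altogether this is a proper colouring with $O(n/\ln n)$ colours, as required.

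\medskip

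The one genuine subtlety — the main obstacle — is making the "every large induced subgraph has a logarithmic independent set" statement hold \emph{simultaneously} for all the subsets $U$ that arise during the greedy process, since these $U$'s are not independent of the randomness and in fact depend on it adaptively. The clean way around this is to prove the event in its "for all induced subgraphs" (union-bounded) form up front, so that it holds deterministically on a \textbf{whp} event regardless of which subsets we later query; this is exactly why the threshold $m \geq n/\ln^2 n$ is chosen so that $\binom{n}{m} \le 2^n$ is still dominated by the super-polynomially small single-subgraph failure probability (which is of the shape $\exp(-\Omega(m^{2-o(1)}))$ or, more crudely, can be extracted from Lemma~\ref{l: chernoff}-type bounds on the number of independent sets). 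Everything else — the first-moment computation, the arithmetic $k(n/\ln^2 n) = \Theta(\ln n)$, and counting the rounds — is routine. Since the paper explicitly cites Chapter~7 of \cite{FK16} for this lemma, in the write-up I would most likely just quote the standard fact $\alpha(G(m,q)) = (2+o(1))\log_{1/(1-p)} m$ with its concentration, and spend the bulk of the proof on the greedy-extraction bookkeeping above.
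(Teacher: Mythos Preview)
The paper does not actually prove Lemma~\ref{l: chromatic number}; it simply states it as a known fact and cites Chapter~7 of \cite{FK16}. Your sketch is precisely the standard argument one finds there (originating with Bollob\'as): use Janson's inequality to show that for every subset $S\subseteq[n]$ with $|S|\ge n/\ln^2 n$ the probability that $G[S]$ has no independent set of size $(1-\epsilon)2\log_{1/(1-p)}|S|$ is $\exp(-|S|^{2-o(1)})$, union-bound over all $\le 2^n$ such subsets, and then greedily extract colour classes. So your proposal is correct and aligned with the cited reference; there is nothing further to compare.

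One small remark: your parenthetical that the required tail bound ``can be extracted from Lemma~\ref{l: chernoff}-type bounds'' is not quite right---Chernoff alone does not control $\Pr(\alpha(G(m,p))<k)$, and you genuinely need Janson (or the second-moment method in the complement) to get the $\exp(-m^{2-o(1)})$ shape that beats the $2^n$ union bound. Your main line already says this correctly, so just drop that aside.
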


\section{Global bounds}\label{s: global}
We begin by giving a construction showing that, for any family of graphs which contains at least one bipartite graph, \textbf{whp} the saturation number in $G(n,p)$ is linear in $n$. This construction will be key for both the proofs of Theorems \ref{th: global 1} and \ref{th: global 2}.
\begin{lemma}\label{l: bipartite}
Let $p\in(0,1)$. Let $\mathcal{F}$ be a family of graphs which contains at least one bipartite graph. Then \textbf{whp}
\begin{align*}
    \textit{sat}\left(G(n,p),\mathcal{F}\right)=O(n).
\end{align*}
\end{lemma}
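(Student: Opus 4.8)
The plan is to construct, whp, an $\mathcal F$-saturated subgraph $H$ of $G\sim G(n,p)$ with $O(n)$ edges. If some member of $\mathcal F$ is a single edge then the empty graph is already $\mathcal F$-saturated, so we may assume every $F\in\mathcal F$ has at least two edges. Fix a bipartite $F_0\in\mathcal F$ with bipartition $(X,Y)$, $|X|\le |Y|$, and set $m\defeq|V(F_0)|$. Two reductions guide everything: to certify that $H$ is $\mathcal F$-saturated it suffices to check that (i) $H$ contains no copy of any $F\in\mathcal F$, and (ii) for every non-edge $\{u,v\}$ of $G$ there is an edge $e$ of $F_0$ such that $H$ contains a copy of $F_0$ with $e$ deleted in which the two endpoints of $e$ are $u$ and $v$, so that adding $\{u,v\}$ closes a copy of $F_0$, hence of a member of $\mathcal F$.

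Here is the shape of the construction. Pick a ``core'' $A\subseteq V(G)$ whose size is a sufficiently large constant (depending on $F_0$), and keep inside $A$ a fixed subgraph $H_A\subseteq G[A]$ which is bipartite with one side of fewer than $|X|$ vertices; since $F_0$ is bipartite and connected, a bipartite graph one of whose sides has fewer than $|X|$ vertices cannot contain $F_0$, so such an $H_A$ is automatically $F_0$-free (and, being very sparse, free of the other members of $\mathcal F$ too), while for a large enough constant $|A|$ the graph $G[A]$ typically still contains every bounded configuration needed to complete copies of $F_0$. Include in $H$ all edges of $G$ between $A$ and $B\defeq V(G)\setminus A$; this is $O(n)$ edges because $|A|$ is constant. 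Finally add inside $B$ a sparse graph $H_B\subseteq G[B]$ with $O(n)$ edges, whose role is to attach the (typically $\Theta(n)$-many) vertices of $B$ that have too few neighbours in $A$ to be ``rescued'' by the core alone. Each $v\in B$ is assigned a role, a vertex of $F_0$, and its bounded set of incident edges (into $A$, and a few through $H_B$) is chosen so that for every other vertex $w$ the neighbourhoods of $v$ and $w$, together with $H_A$, host a copy of $F_0$ minus an edge whose missing edge is exactly $\{v,w\}$. The Chernoff bounds of Lemma \ref{l: chernoff} and a union bound over the $\binom n2$ pairs are what make such a choice possible whp: for each pair the failure probability can be driven below $n^{-2}$ by providing enough ``gadgets'' to route the edge through, while each vertex still uses only $O(1)$ edges.

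The verification then splits into three parts. The edge count is immediate: $O(1)$ edges inside $A$, $O(n)$ between $A$ and $B$, and $O(n)$ inside $B$. The property $\mathcal F$-freeness is proved structurally: the only unbounded-degree vertices of $H$ lie in the constant-size $\mathcal F$-free core $A$, and every vertex of $B$ meets $A$ in a set of bounded size with $H_B$ sparse, so any copy of an $F\in\mathcal F$ in $H$ would reduce to a copy of a bounded fragment of $F$ either inside the $\mathcal F$-free core or inside $H_B$, which is ruled out by construction (using here that each $F\in\mathcal F$ has at least two edges). Saturation is checked by a short case analysis on whether $\{u,v\}$ lies inside $B$, has an endpoint in $A$, or interacts through $H_B$, in each case producing the required copy of $F_0$ minus an edge from the assigned roles and the richness of $H_A$.

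The step I expect to be the real obstacle is getting the construction to do all of this simultaneously: the core together with its attachments must carry enough local structure that \emph{every} missing edge completes a copy of the bipartite $F_0$, yet $H$ must avoid \emph{every} $F\in\mathcal F$ — not merely $F_0$, and including non-bipartite $F$ — and have only $O(n)$ edges, which forces all but $O(1)$ of its vertices to have bounded degree. The genuinely delicate sub-case is $F_0$ with minimum degree at least $2$ (for instance $F_0=C_4$), where no vertex of $F_0$ can be hung off a single edge, so the gadget design, and the argument via Lemma \ref{l: chernoff} and the union bound bounding how few vertices of $B$ fail to be routed into a gadget, must be set up especially carefully.
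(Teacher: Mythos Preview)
There is a genuine gap in the $\mathcal F$-freeness step, and it breaks the whole scheme. You take a core $A$ of ``sufficiently large constant'' size (in particular $|A|\ge |V(F_0)|\ge |X|$) and then include \emph{all} $G$-edges between $A$ and $B$. But then, for any $|X|$ vertices in $A$, their common $G$-neighbourhood in $B$ has size $(1+o(1))p^{|X|}n\ge |Y|$ whp, so $H$ already contains $K_{|X|,|Y|}\supseteq F_0$ using only cross edges --- no edge inside $A$ or inside $B$ is needed. Your structural argument (``the only unbounded-degree vertices lie in the $\mathcal F$-free core, and every $v\in B$ meets $A$ in a bounded set'') does not exclude this: in such a copy each $B$-vertex has exactly $|X|$ neighbours in $A$, which is bounded, and the copy uses no edge of $H_A$ at all. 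So the construction as stated is not $\mathcal F$-free.

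The obvious repair, taking $|A|<|X|$, restores freeness but then a single constant core cannot do the completing job: a fixed positive fraction of $B$ is \emph{not} in the common $G$-neighbourhood of $A$, so for $\Theta(n^2)$ pairs in $B$ the core gives you nothing, and your union bound ``drive the failure probability below $n^{-2}$'' cannot get off the ground with a constant-size $A$. Your $H_B$ is supposed to absorb this, but it is left entirely unspecified (``gadgets'', ``roles''), and it would have to complete $\Theta(n^2)$ pairs with $O(n)$ edges while keeping $H$ $\mathcal F$-free --- essentially the original problem. The paper resolves exactly this tension: it uses $\tau=\Theta(\log n)$ disjoint cores $A_1,\ldots,A_\tau$, each of size $\ell-1$ where $\ell$ is the minimum side of a bipartite member of $\mathcal F$ (so each $A_i\cup B_i$ is $\mathcal F$-free for the same reason you cite), lets $B_i$ be the common $G$-neighbourhood of $A_i$ outside earlier $B_j$, and then shows that the uncovered remainder shrinks geometrically to $o(\sqrt n)$. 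Saturation is finished greedily, with the key observation that once a vertex $v$ acquires $|V(F_0)|-\ell$ neighbours in its $B_i$ it already closes a copy of $F_0$ with $A_i$, so the greedy phase adds only $O(1)$ edges per vertex.
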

\begin{proof}
Set
\begin{align*}
    \ell\coloneqq \min\left\{|V_1|\colon F\in\mathcal{F}, \chi(F)=2, V_1\text{ and } V_2 \text{ are colour classes of } F \text{ and } V(F)=V_1\sqcup V_2\right\}.
\end{align*}
We may assume that $\ell\ge 2$, as the case of star is immediate --- an $\mathcal{F}$-saturated graph then has bounded maximum degree. Let us further denote by $F_0$ a bipartite graph from $\mathcal{F}$ which achieves the above $\ell$.

Let $\tau$ be the smallest integer satisfying
\[
    (1-p^{\ell - 1})^\tau n \le n^{2/5}.
\]

We construct a subgraph $H\subseteq G(n,p)$ which is $\mathcal{F}$-saturated such that \textbf{whp} $|E(H)|=O(n)$ in two stages. Let us fix $\tau$ vertex disjoint sets of size $\ell-1$, denote them by $A_1,\ldots, A_{\tau}$ and set $A=\bigcup_i A_i$. We then proceed iteratively. At the first step, we set $B_1$ to be the set of all common neighbours of $A_1$ in $G(n,p)$ among the vertices outside $A$. At the $i$-th step, where $1<i\le \tau$, we set $B_i$ to be the set of all common neighbours of $A_i$ in $G(n,p)$ among the vertices outside $A\cup\bigcup_{j<i}B_j$. The subgraph $H$ contains all the edges between $A_i$ and $B_i$ in $G(n,p)$ for every $1\le i\le \tau$. 

Observe that $H$ is $\mathcal{F}$-free since it contains only bipartite graphs admitting a $2$-colouring with one colour class of size at most $\ell-1$. Moreover, every edge in $H$ is incident to $B_i$, for some $1\le i\le \tau$. Hence, the number of edges in $H$ thus far is at most
\[
    \sum_{i} |B_i| (\ell - 1) \le (\ell - 1)n.
\]

We now turn to add edges to $H$ such that it becomes $\mathcal{F}$-saturated. First, we consider edges whose both endpoints are in $B_i$, for some $1\le i\le \tau$. For every $1\le i\le \tau$, as long as there is an edge in $G(n,p)[B_i]$ which does not close a copy of $F \in \mathcal{F}$, we add it to $H$. Note that the degree of every vertex increased by at most $|F_0| - \ell - 1$. Indeed, if for some $1\le i\le \tau$ there is a vertex $v \in B_i$ with degree $|F_0| - \ell$ in $H[B_i]$, then we can form a copy of $K_{\ell, |F_0| - \ell}$ with $v, N_{H[B_i]}(v)$, and $A_i$. However, $F_0 \subseteq K_{\ell, |F_0| - \ell}$, a contradiction. Hence, the number of edges that are added to $H$ is at most
\[
    \sum_{i} |B_i| (\ell - 1 + |F_0| - \ell) \le (|F_0| - 1)n.
\]

Now, for every $1\le i\le \tau$, as long as there is an edge between $[n] \setminus \cup_{j \le i} B_j$ and $B_i$ which does not close a copy of $F \in \mathcal{F}$, we add it to $H$. For every vertex $v\notin A$, the probability that $v \notin \cup_{j=1}^{i} B_j$ is $(1-p^{\ell - 1})^i$. By Lemma \ref{l: chernoff}\eqref{l: chernoff 1}, the probability that there are at least $2(1-p^{\ell - 1})^i n$ such vertices is at most $\exp\left(-n^{1/4}\right)$. Thus, by the union bound over all less than $n$ choices of $i \le \tau$, \textbf{whp}
\begin{align}
    n - \sum_{j=1}^{i}|B_j|\le 2(1-p^{\ell-1})^i n, \quad \forall 1\le i\le \tau. \label{eq: outside B}
\end{align}
Hence, by \eqref{eq: outside B}, the number of edges we add to $H$ in this step is \textbf{whp} at most
\begin{align*}
    \sum_{i \le \tau} \left(n - \sum_{j=1}^{i}|B_j|\right)(|F_0| - \ell - 1) \le \sum_{i} 2(1-p^{\ell - 1})^i n(|F_0| - \ell - 1) = O(n).
\end{align*}

Furthermore, as long as there is an edge in $G(n,p)$ outside $A$ which closes a copy of some $F\in\mathcal{F}$, we add it to $H$. Note that, at this step, we only add edges induced by $[n]\setminus\left(A \cup \bigcup_i B_i\right)$. By \eqref{eq: outside B} (for $i=\tau$) and our choice of $\tau$, we have that \textbf{whp}  $\left|[n]\setminus\left(A \cup \bigcup_{j=1}^{\tau} B_i\right)\right| \le 2(1-p^{\ell - 1})^\tau n = o(\sqrt{n})$. Therefore, we only add $o(n)$ many edges in this step.

Finally, we are left with considering edges induced by $A$ and between $A$ and $[n]\setminus \left(A\cup\bigcup_iB_i\right)$. Since $|A|=O(\ln n)$, $A$ induces at most $O\left(\ln^2n\right)$ edges, and by the same reasoning as above there are \textbf{whp} at most $o(\ln n\sqrt{n})$ edges between $A$ and $[n]\setminus\left(A\cup\bigcup_iB_i\right)$. Hence, we can add to $H$ these edges one by one, until the resulting graph will be $\mathcal{F}$-saturated. We thus obtain the required subgraph $H$ where \textbf{whp} $|E(H)|=O(n)$.
\end{proof}

The proof of Theorem \ref{th: global 2} follows a similar construction to the one in Lemma \ref{l: bipartite}.
\begin{proof}[Proof of Theorem \ref{th: global 2}]
Let $F$ be a graph and let $I_{\mathrm{max}} \subseteq V(F)$ be a colour class of maximum size among all proper colourings of $F$ with $\chi(F)$ colours. We further assume that there is a vertex $v \in V(F) \setminus I_{\mathrm{max}}$ such that $N_F(v) \subseteq I_{\mathrm{max}}$. We repeat the construction of Lemma \ref{l: bipartite} with the following difference: we require each of the sets $A_i$ to induce a copy $F \setminus (I_{\mathrm{max}} \cup \{v\})$. Note that this is indeed possible as we may first take a subset $U\subset[n]$ of $n^\epsilon$ vertices, for some small $\epsilon > 0$, and \textbf{whp} we can find a $K_{|V(F)|}$-factor (see, for example, \cite{JKV08}), and then we can take a copy of $F \setminus (I_{\mathrm{max}} \cup \{v\})$ from the first $\tau$ cliques. Then $B_i$ should be selected from $[n]\setminus U$.

Note that, for any $i\in [\tau]$, $A_i$ and its common neighbourhood $B_i$ do not contain a copy of $F$. Indeed, if we had a copy of $F$, denote it by $\Tilde{F}$, then $\chi(\Tilde{F}[A_i\cap V(\tilde{F})])\le \chi(\tilde{F}[A_i]) =\chi(F)-1$, and thus we could colour $V(\tilde{F}[B_i\cap V(\tilde{F})])$ with one colour and $V(\Tilde{F}[A_i\cap V(\tilde{F})])$ with $\chi(F)-1$ colours, thus obtaining a colour class of size $|V(\Tilde{F}[B_i\cap V(\tilde{F})])| \ge |I_{\mathrm{max}}| + 1$ --- a contradiction to the assumption that $I_{\mathrm{max}}$ is a colour class of $F$ of maximum size.

Observe that connecting any $v\notin A_i$ to $|I_{\mathrm{max}}|$ vertices in $B_i$ creates a copy of $F$ with $v, N_{H[B_i]}(v)$ and $A_i$. Thus, by the same arguments as in Lemma \ref{l: bipartite}, the obtained graph $H$ is $F$-saturated, and has \textbf{whp} $O(n)$ edges.
\end{proof}

Utilising Lemma \ref{l: bipartite}, we can now prove Theorem \ref{th: global 1}.
\begin{proof}[Proof of Theorem \ref{th: global 1}]
In fact, we will prove a slightly stronger statement: that for any finite family of graphs $\mathcal{F}$, we have that \textbf{whp} $\textit{sat}\left(G(n,p),\mathcal{F}\right)=O(n\log n)$ for any fixed $p\in (0,1)$ --- as we prove by induction, this will help us with the inductive step.

Set $\chi_0\coloneqq \chi(\mathcal{F})=\min_{F\in \mathcal{F}}\chi(F)$. We prove by induction on $\chi_0$. If $\chi_0=2$, we are done by Lemma \ref{l: bipartite}. We may assume now that $\chi_0\ge 3$, and that the statement holds for any family $\mathcal{F}'$ with $\chi(\mathcal{F}')<\chi_0$.

We now construct $H\subseteq G(n,p)$ such that \textbf{whp} $H$ is $\mathcal{F}$-saturated, and $e(H)=O(n\ln n)$. We begin by letting $H$ be the empty graph. Let $A$ be a set of $C\ln n$ vertices for some large enough constant $C\coloneqq C(\mathcal{F},p)>0$. Set
\begin{align*}
    \hat{\mathcal{F}}\coloneqq \left\{F\setminus I\colon F\in \mathcal{F} \text{ and } I \text{ is an independent set of } F\right\}.
\end{align*}
We thus have that for some $F'\in \hat{\mathcal{F}}$, $\chi(F')=\chi_0-1$. By induction, \textbf{whp} there exists $H'\subseteq [n]\setminus A$ which is $\hat{\mathcal{F}}$-saturated in $G(n,p)[[n]\setminus A]$ with $O(n\ln n)$ edges. Furthermore, for a fixed set $X$ of order $\max_{F\in\mathcal{F}}|V(F)|$ in $[n]\setminus A$, $d_{G(n,p)}(X|A)$ is distributed according to $Bin\left(|A|,p^{|X|}\right)$. Thus, by Lemma \ref{l: chernoff}\eqref{l: chernoff 1},
\begin{align*}
    \mathbb{P}\left(d_{G(n,p)}(X|A)\le \max_{F\in\mathcal{F}}|V(F)|\right)\le \exp\left(-\frac{C(\ln n)p^{|X|}}{4}\right)\le n^{-2\max_{F\in\mathcal{F}}|V(F)|},
\end{align*}
for $C$ large enough. Thus, by the union bound, \textbf{whp} for every set of order $\max_{F\in\mathcal{F}}|V(F)|$ in $[n]\setminus A$, we can find a set of $\max_{F\in\mathcal{F}}|V(F)|$ common neighbours in $A$. Therefore, when we add a missing edge from $G(n,p)[V\setminus A]$, we close a copy of some $F'\in\hat{\mathcal{F}}$. This copy of $F'$ together with its common neighbours in $A$ form a copy of some $F\in\mathcal{F}$ (see Figure \ref{f: completing F}).

Let $E'$ be the set of edges of $G(n,p)$ between $[n]\setminus A$ and $A$. Let $H$ be $H'$ together with $E'$. 

First, note that $H$ is $\mathcal{F}$-free. Indeed, since $H[A]$ is an empty graph, if there is a copy of $F$ in $H$, $F[A]$ must be an independent set. However, by definition of $H'$, $F[[n]\setminus A]$ is free of any $F\setminus I$ for any independent set $I$ of $F$ --- a contradiction. 

Furthermore, by construction, every edge of $G(n,p)\setminus E(H)$ in $[n]\setminus A$ closes a copy of $F\in\mathcal{F}$. Since $|E'|=O(n\ln n)$ and \textbf{whp} $|E(H')|=O(n\ln n)$, we have that \textbf{whp} $|E(H)|=O(n\ln n)$. In order to ensure that $H$ is $\mathcal{F}$-saturated, the only edges to consider are those where both endpoints are in $A$, and there are at most $O(\ln^2n)$ such edges. Hence, \textbf{whp} there exists $H$ which is $\mathcal{F}$-saturated with $O(n\ln n)$ edges. 
\end{proof}
\begin{figure}[H]
\centering
\includegraphics[width=0.4\linewidth]{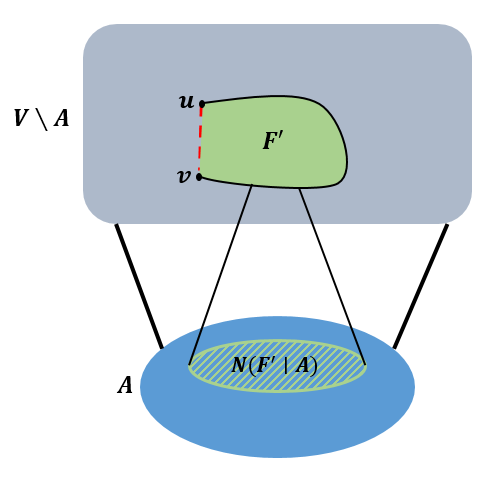}
\caption{In dashed red line there is a missing edge $\{u,v\}$ which closes a copy of $F'\in \hat{\mathcal{F}}$. Together with its common neighbourhood in $A$ (coloured light green and blue), this closes a copy of $F\in\mathcal{F}$.}
\label{f: completing F}
\end{figure}

\section{Sharp bounds}\label{s: sharp}
In this section we prove Theorems \ref{th: complete mp} and \ref{th: property star}. We begin with the proof of Theorem \ref{th: complete mp}. 

Let us begin with an outline of the proof. We show that \textbf{whp} there exists a subgraph $H \subseteq G(n,p)$ which is $K_{s_1, \dots, s_{\ell}}$-saturated and $e(H) \le (1+o(1)) n \log_{\frac{1}{1-p}} n$. Note that if we find a subgraph $H \subseteq G(n,p)$, with $e(H) \le (1+o(1)) n \log_{\frac{1}{1-p}} n$, which completes all but at most $o(n \ln n)$ edges, then we can add edges one by one if necessary, and obtain a subgraph which is $K_{s_1, \dots, s_{\ell}}$-saturated and has at most $(1+o(1)) n \log_{\frac{1}{1-p}} n$ many edges.

Very roughly, we take a subset $A$ of order $\Theta(\ln n)$ and $B$ from $[n]\setminus A$. We then find a subgraph in $G(n,p)[A]$ which is $\left\{K_{\ell}, K_{s_1}^{(\ell - 1)}\right\}$-free and such that there is a copy of $K_{s_1 - 1, s_3, \dots, s_\ell}$ in every large enough subset of $A$ (where if $s_1=1$, $K_{s_1-1,s_3,\dots,s_{\ell}}=K_{s_3,\dots,s_{\ell}}$). In the case of $s_\ell = 1$, as in \cite{KS17}, it suffices to set $B$ as the empty graph and draw all the edges between $A$ and $B$, and then to show that almost all the edges are completed. However, as in \cite{KS17}, there will still be a small (yet not negligible) amount of edges that will not be completed, as the co-degree of their endpoints in $A$ is too small. For these type of edges, some additional technical work is required, which will force us to maintain two additional small sets outside of $B$ --- $A_2$ and $A_3$ --- which, as in \cite{KS17}, will allow us to deal with these problematic edges. The case of $s_\ell \ge 2$ is naturally more delicate, as $B$ cannot be taken to be an empty graph, but instead requires some special properties. Using a novel construction, we find a subgraph in $G(n,p)[B]$ which is almost-$(s_2-1)$-regular (that is, almost all its vertices are of degree $s_2-1$, and the others might have smaller degree) and $K_{s_1,s_2-s_1+1}$-free in $G(n,p)[B]$. This subgraph will have another crucial property -- the vertices of any copy of $K_{1,s_2-1}$ have a large common neighbourhood in $A$, which we show through coupling and covering-balls arguments (the construction of this graph is the most involved part of the proof and includes key new ideas, and appears in Lemma \ref{lemma:H_B_1-subgraph}). In this way, almost all edges in $B$ close a copy of $K_{1, s_2}$ such that this copy has a large common neighbourhood in $A$, in which we can find a copy of $K_{s_1 - 1, s_3, \dots, s_\ell}$ (as in the clique case, some additional technical work is required to deal with the other edges). These two copies form a copy of $K_{s_1, \dots, s_\ell}$ as needed. 

Note that the requirement that the subgraph in $G(n,p)[B]$ is $K_{s_1,s_2-s_1+1}$-free is necessary, as otherwise a copy of $K_{s_1,\ldots, s_{\ell}}$ could be formed when drawing the edges between $B$ and $A$.

\subsection{Proof of Theorem \ref{th: complete mp}}
We may assume that $s_{\ell}\ge 2$, as the case of cliques has been dealt with in \cite{KS17}. 

Let $\gamma,\epsilon >0$ be sufficiently small constants. Let $G\sim G(n,p)$. Let $L\coloneqq L\left(s_1,\ldots, s_{\ell}\right)$ be a constant large enough with respect to $s_1,\ldots, s_{\ell}$. Let $\rho=\rho(p)$ and set
\begin{align*}
    a_1 = \frac{1}{p}\left(1 + (1+\gamma)\epsilon\right) \log_{\rho} n,\quad a_2 = L \log_{\rho}n,\quad a_3 = \frac{a_2}{\sqrt{\ln a_2}}.
\end{align*}

Let $A_1$, $A_2$, and $A_3$, be disjoint sets of vertices of sizes $a_1$, $a_2$, and $a_3$, respectively. Set $B \coloneqq V \setminus (A_1 \cup A_2 \cup A_3)$. Set
\[
    I \coloneqq \Big[\left(1 + \epsilon\right) \log_{\rho} n,\ \left(1 + (1+2\gamma)\epsilon\right) \log_{\rho} n\Big].
\]

We say that a vertex $v \in B$ is $A_1$-\emph{good} if $d_{G}(v| A_1) \in I$. Otherwise, we say that $v$ is $A_1$-\emph{bad}. Let $B_1 \subseteq B$ be the set of $A_1$-good vertices, and set $B_2 = B \setminus B_1$. Note that \textbf{whp} $|B_2|=O\left(\frac{n}{\ln n}\right)$. Indeed, for every vertex $v \in B$, we have $d_{G}(v| A_1) \sim \mathrm{Bin}(a_1, p)$. By Lemma \ref{l: chernoff}\eqref{l: chernoff 1}, for every vertex $v \in B$,
\[
    \mathbb{P}(v\ \text{is $A_1$-bad}) \le \exp\left(-c\ln n\right),
\]
for some constant $c > 0$. Thus, $\mathbb{E}[|B_2|] = O\left(n^{1-c}\right)$. By Markov's inequality, \textbf{whp} $|B_2| = O\left(\frac{n}{\log_{\rho}n}\right)$.

As mentioned prior to the proof, a key element in the proof is finding in $B_1$ a subgraph $H_{B_1}$ of $G$ which is $K_{s_1, s_2 - s_1 + 1}$-free and almost-$(s_2-1)$-regular in the edges in $G[B_1]$. Moreover, we want $H_{B_1}$ to satisfy the following property. Every vertex $v \in B_1$ and its neighbours in $H_{B_1}$ have a large common neighbourhood in $A_1$ in $G$. The proof of this key lemma is deferred to the end of this proof.
\begin{lemma}\label{lemma:H_B_1-subgraph}
    \textbf{Whp} there exists $H_{B_1} \subseteq G$ with $V(H_{B_1})=B_1$ such that the following holds.
    \begin{itemize}
        \item $H_{B_1}$ is $K_{s_1, s_2-s_1+1}$-free.
        \item The maximum degree of $H_{B_1}$ is $s_2-1$, and all but $O\left(\frac{n}{\log n}\right)$ of its vertices are of degree $s_2-1$ in $H_{B_1}$.
        \item For every $u,v\in V(H_{B_1})$ such that $\{u,v\}\in E(H_{B_1})$, $d_{G}(u, v| A_1) \ge (1 + (1-6\gamma)\epsilon) \log_{\rho} n$.
    \end{itemize} 
\end{lemma}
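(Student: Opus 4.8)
The plan is to construct $H_{B_1}$ via an auxiliary random structure on $B_1$ and then verify the three bullet points. First I would fix a small parameter and restrict attention to the induced subgraph $G[B_1]$, which is itself (essentially) a binomial random graph on $|B_1| = (1-o(1))n$ vertices with edge probability $p$. The goal is to find a subgraph of bounded maximum degree $s_2-1$ in which almost every vertex attains degree exactly $s_2-1$, which is $K_{s_1, s_2-s_1+1}$-free, and — crucially — such that every edge $\{u,v\}$ of the subgraph has a large common neighbourhood of $u$ and $v$ in $A_1$. The degree and $K_{s_1,s_2-s_1+1}$-freeness conditions are cheap: any graph of maximum degree $s_2-1$ contains no $K_{s_1, s_2-s_1+1}$ provided $s_1 \geq 2$ (since both sides of that complete bipartite graph would force a vertex of degree $\geq s_2-s_1+1 \geq$ ... — one has to check the small cases, but morally $K_{s_1, s_2-s_1+1}$ has a vertex of degree $s_2-s_1+1$ and if $s_1 \geq 2$ the other side forces degree $s_1 \geq 2$, so with $\Delta \leq s_2 - 1$ one argues it cannot embed), so the real content is the third bullet.

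To get the co-degree condition, I would not take an arbitrary near-perfect $(s_2-1)$-regular subgraph; instead I would build it greedily/randomly while keeping only edges $\{u,v\}$ with $d_G(u,v \mid A_1) \geq (1+(1-6\gamma)\epsilon)\log_\rho n$. Call such an edge of $G[B_1]$ \emph{rich}. Since each $v \in B_1$ is $A_1$-good, $d_G(v\mid A_1) \in I$, i.e. roughly $(1+\epsilon)\log_\rho n$ neighbours in $A_1$; for a fixed pair $u,v$, conditioning on these two neighbourhood sizes, $d_G(u,v\mid A_1)$ is hypergeometric-like and concentrates around $p \cdot d_G(v\mid A_1) \approx p(1+\epsilon)\log_\rho n$. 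The target $(1+(1-6\gamma)\epsilon)\log_\rho n$ must therefore be below $p(1+\epsilon)\log_\rho n$ up to the fluctuations — this is where $p \geq 1/2$ enters, exactly as flagged in the remark after Theorem~\ref{th: complete mp}, and where the computation mirrors Lemma~\ref{l: many neighbours}. A Chernoff bound (Lemma~\ref{l: chernoff}) shows that for a fixed pair the probability of \emph{not} being rich is at most some $n^{-\beta}$ with $\beta = \beta(p,\gamma,\epsilon) > 0$ small; hence in expectation only a $n^{-\beta}$ fraction of edges of $G[B_1]$ are poor, and the "poor graph" on $B_1$ is sparse. The plan is then to invoke a result guaranteeing an almost-$(s_2-1)$-regular spanning subgraph inside the "rich graph" $R$ on $B_1$: $R$ is a random-like graph of density $(1-o(1))p$, with min-degree $\gg \log n$ whp after deleting the $o(n)$ vertices incident to too many poor edges, so a near-perfect $(s_2-1)$-factor of $R$ exists (e.g. via a Hall-type / Vizing-type argument, or by a simple greedy in which we repeatedly match up vertices of deficient degree, losing only $O(n/\log n)$ vertices to the low-degree tail coming from Lemma~\ref{l: chernoff}\eqref{l: chernoff 3}-type events on $A_1$-degrees and from poor-edge-heavy vertices). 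Taking $H_{B_1}$ to be this subgraph gives all three bullets simultaneously, since every edge of $H_{B_1}$ is by construction rich.

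The main obstacle I anticipate is making the "rich graph" $R$ regular enough: the naive Chernoff estimate only bounds the \emph{expected} number of poor edges, so one must upgrade this to a statement that whp every vertex is incident to few poor edges (a second-moment or union-bound argument over vertices, using that the events "$\{u,v\}$ is poor" for fixed $u$ are governed by independent coin flips between $u$ and $A_1$, $v$ and $A_1$, and $u$ and $v$), and then argue that deleting or down-sizing the $O(n/\log n)$ bad vertices still leaves a graph with a near-perfect $(s_2-1)$-factor. Here is where I expect the authors to deploy the coupling with a random graph in a Hamming space and the covering-balls independence-number bound mentioned in the introduction (and in Lemma~\ref{claim:bound-on-alpha-G_W}): phrasing "$v$ and its chosen neighbours have large common $A_1$-neighbourhood" as a constraint on the $\{0,1\}^{a_1}$-vectors recording each vertex's $A_1$-neighbourhood, one wants to pick, for each $v$, a set of $s_2-1$ partners whose Hamming "AND" with $v$'s vector is still large — equivalently one needs a large independent set / proper colouring in an auxiliary conflict graph on $B_1$, and the Hamming-ball covering argument bounds how many vertices any one vertex conflicts with. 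I would structure the write-up as: (1) define rich edges and the vector encoding; (2) Chernoff bound for a single pair, then concentration to "few poor edges per vertex" whp; (3) discard/repair bad vertices, bounding their number by $O(n/\log n)$; (4) find the near-perfect $(s_2-1)$-factor in the rich graph on the good vertices using the Hamming-space coupling and Lemma~\ref{claim:bound-on-alpha-G_W}; (5) check $K_{s_1,s_2-s_1+1}$-freeness from $\Delta \leq s_2-1$ and conclude.
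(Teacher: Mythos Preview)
Your plan has a fatal quantitative error at its core. You assert that for a typical pair $u,v\in B_1$ the co-degree $d_G(u,v\mid A_1)$ concentrates around $p\cdot d_G(v\mid A_1)\approx p(1+\epsilon)\log_\rho n$, and that the threshold $(1+(1-6\gamma)\epsilon)\log_\rho n$ lies \emph{below} this typical value, so that all but an $n^{-\beta}$ fraction of pairs are ``rich''. But for every $p<1$ one has $p(1+\epsilon)<1+(1-6\gamma)\epsilon$ when $\epsilon,\gamma$ are small; in fact for $p=1/2$ the typical co-degree is about $\tfrac12(1+\epsilon)\log_\rho n$, while the threshold is essentially $(1+\epsilon)\log_\rho n$, \emph{twice} as large. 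Being rich is therefore a large-deviation event: the computation in Lemma~\ref{l: many neighbours} shows the probability that a single pair is rich is only $n^{-1+O(\epsilon)}$. Hence your rich graph $R$ has expected degree $n^{O(\epsilon)}$, not $(1-o(1))pn$; the ``poor graph'' is essentially all of $G[B_1]$, and the Hall/Vizing-type argument for a near-perfect $(s_2-1)$-factor in a dense $R$ collapses.

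This is exactly why the paper cannot proceed as you suggest and instead uses the Hamming-space coupling substantively, not as a clean-up step. The actual argument builds the auxiliary graph $\Gamma$ of rich pairs, splits $\Gamma\cap G(n,p)$ into $s_2-1$ independently sprinkled copies $\Gamma_1,\ldots,\Gamma_{s_2-1}$ via $p'=1-(1-p)^{1/(s_2-1)}$, and then iteratively extracts a near-perfect matching $M_i\subseteq\Gamma_i$ while keeping $\bigcup_j M_j$ $C_4$-free. The entire weight is on showing $\alpha(\Gamma_i)\le n/\log_\rho n$ despite $\Gamma_i$ being sparse: this is done by encoding each vertex by its $A_1$-neighbourhood, covering the relevant Hamming space by $n/\log^3 n$ balls so that any $m=n/\log n$ vertices span $\Omega(n\log n)$ edges of $\Gamma$, and then union-bounding over all $m$-sets. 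A secondary gap: maximum degree $s_2-1$ alone does \emph{not} rule out $K_{s_1,s_2-s_1+1}$ when $1<s_1<s_2$ (e.g.\ $K_{2,3}$ has maximum degree $3=s_2-1$ for $s_2=4$); the paper additionally enforces $C_4$-freeness of $\bigcup_i M_i$, which is why the matchings are built one at a time.
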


Since $K_{\ell}$ and $K_{s_1}^{(\ell-1)}$ are two-vertex-connected graphs (as $\ell\ge 3$ by assumption) and are non-$K_{s_1-1,s_3,\ldots,s_{\ell}}$-degenerate, by Lemma \ref{l: ABlemma}, we can take a spanning subgraph $H_{A_1} \subseteq G[A_1]$ which is $(1/\ln^3|A_1|)$-dense with respect to $K_{s_1 - 1, s_3, \dots, s_{\ell}}$ and $\{K_{\ell}, K_{s_1}^{(\ell-1)}\}$-free. Moreover, let us move the vertices which are of degree less than $s_2-1$ in $H_{B_1}$ from $B_1$ to $B_2$. Note that by Lemma~\ref{lemma:H_B_1-subgraph}, \textbf{whp} we moved $O\left(\frac{n}{\log n}\right)$ vertices from $B_1$ to $B_2$. 

Let $H_1$ be the graph on $A_1 \cup B$ with the edges $H_{A_1}$, $H_{B_1}$, and all the edges between $A_1$ and $B$ in $G$. We now continue with a series of self-contained claims. 
\begin{figure}[H]
\centering
\includegraphics[width=0.8\linewidth]{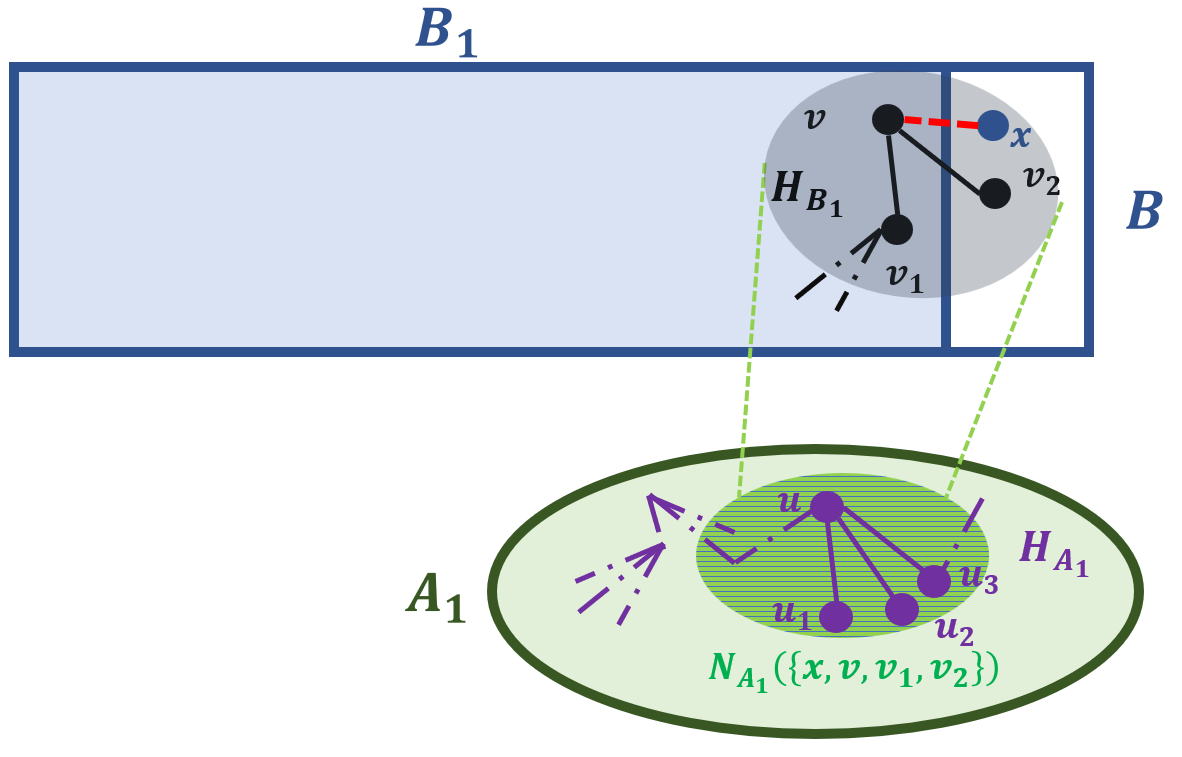}
\caption{An illustration of how the edge $\{x,v\}$, which is not induced by $B_2$, is completed by $H_{1}$. In this case, we consider $K_{2,3,3}$. In $H_{B_1}$ we have the vertices $v, v_1\in B_1$ and $v_2\in B_2$, with the edges $\{v,v_1\}$ and $\{v,v_2\}$. In the common neighbourhood in $A_1$ of $\{x,v,v_1,v_2\}$, we can find inside $H_{A_1}$ a copy of $K_{1,3}$. Note that with the edges of $H_1$ and the edge $\{x,v\}$, we now have a copy of $K_{2,3,3}$, with its parts being $\{v,u\}, \{v_1,v_2,x\}, \{u_1, u_2, u_3\}$.}
\label{f: theorem 3}
\end{figure}
\begin{claim}\label{c: zero}
    \textbf{Whp}, $H_1$ completes all but $o(n \ln n)$ of the vertex pairs in $B$ not induced by $B_2$.
\end{claim}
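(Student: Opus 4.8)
\textbf{Proof plan for Claim \ref{c: zero}.}
The plan is to count the vertex pairs $\{x,v\}$ in $B$ that are \emph{not} completed by $H_1$, show there are $o(n\ln n)$ of them, and thereby conclude. We restrict to pairs with at least one endpoint in $B_1$, since pairs inside $B_2$ are explicitly excluded; say $v\in B_1$. The key point is that $v$ has degree exactly $s_2-1$ in $H_{B_1}$: write $N_{H_{B_1}}(v)=\{v_1,\dots,v_{s_2-1}\}$, where some of these neighbours may lie in $B_2$ (these are precisely the edges of $H_{B_1}$ that survived when we moved vertices of small degree). Thus $\{v\}\cup N_{H_{B_1}}(v)$ together with the edge $\{x,v\}$ already forms the ``$K_{1,s_2}$ plus a pendant'' skeleton $K_{1,1,s_2-1}$ with centre-pair $\{x,v\}$; if in addition we can locate a copy of $K_{s_1-1,s_3,\dots,s_\ell}$ inside the common $A_1$-neighbourhood of $\{x,v,v_1,\dots,v_{s_2-1}\}$ in $G$, then—because all $A_1$--$B$ edges are present in $H_1$ and $H_{A_1}\subseteq H_1$—these vertices span a copy of $K_{s_1,s_2,\dots,s_\ell}$ (parts: $\{v\}\cup(\text{one vertex of the }K_{s_1-1}\text{ side})$, $\{x,v_1,\dots,v_{s_2-1}\}$, and the remaining $s_3,\dots,s_\ell$ parts inside $A_1$), so $H_1$ completes $\{x,v\}$. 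Compare with Figure \ref{f: theorem 3}.

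So a pair $\{x,v\}$ with $v\in B_1$ fails to be completed only if the common $A_1$-neighbourhood $W\coloneqq N_G(x,v,v_1,\dots,v_{s_2-1}\,|\,A_1)$ contains no copy of $K_{s_1-1,s_3,\dots,s_\ell}$. By the third bullet of Lemma \ref{lemma:H_B_1-subgraph}, $d_G(v,v_i|A_1)\ge(1+(1-6\gamma)\epsilon)\log_\rho n$ for each $i$, and intersecting with $N_G(x|A_1)$ (an independent $\mathrm{Bin}$-type event over the $\le a_1$ vertices of $A_1$) one expects $|W|$ to still be of order $\log_\rho n\gg \ln^3|A_1|$ with overwhelming probability, so that by the $(1/\ln^3|A_1|)$-density of $H_{A_1}$ with respect to $K_{s_1-1,s_3,\dots,s_\ell}$ the set $W$ does contain the required copy. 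More precisely, I would fix $v\in B_1$ and its neighbourhood $N_{H_{B_1}}(v)$ (which are determined by $H_{B_1}$, hence by $G[B]$, and are independent of the $A_1$--$B$ edges), and then estimate, over the randomness of $G$ between $\{x\}\cup\{v,v_1,\dots,v_{s_2-1}\}$ and $A_1$, the probability that $|W|<\ln^3|A_1|$; a Chernoff bound (Lemma \ref{l: chernoff}) gives that this probability is at most $n^{-c}$ for some constant $c>0$, uniformly over the (at most $n^{s_2}$) choices of $v$ and its $H_{B_1}$-neighbours and over $x$. Hence the expected number of bad pairs $\{x,v\}$ with $v\in B_1$ is $O(n^{2-c})=o(n)$, and Markov's inequality finishes the claim; here it is crucial that the event ``$|W|$ small'' depends only on the $G$-edges between $A_1$ and a bounded vertex set, whereas $H_{A_1}$ depends only on $G[A_1]$, so the density property of $H_{A_1}$ applies to the (random) set $W$ conditionally.

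The main obstacle is the conditional independence bookkeeping: $H_{B_1}$ and the sets $B_1,B_2$ are built from $G[B]$ (and the rebalancing step that moves low-degree vertices), $H_{A_1}$ is built from $G[A_1]$, and the relevant randomness for $W$ lives in the bipartite part between $A_1$ and $B$. One must phrase the argument so that, after revealing $G[B]$ and $G[A_1]$ (hence fixing $H_{B_1}$, $B_1$, $H_{A_1}$), the edges between $A_1$ and $\{x,v,v_1,\dots,v_{s_2-1}\}$ are still fresh i.i.d.\ $\mathrm{Bernoulli}(p)$, so that $|W|$ concentrates and the density of the already-fixed $H_{A_1}$ guarantees the copy of $K_{s_1-1,s_3,\dots,s_\ell}$. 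A secondary point to handle cleanly is that some $v_i$ may lie in $B_2$: this causes no trouble, since we never needed $v_i\in B_1$, only that $\{v,v_i\}\in E(H_{B_1})$ so that the third bullet of Lemma \ref{lemma:H_B_1-subgraph} applies and that the edge $\{v,v_i\}$ is present in $H_1$ (it is, as $H_{B_1}\subseteq H_1$). Finally, one should double-check the part-size arithmetic $\{v\}$ with the $K_{s_1-1}$-side giving a part of size $s_1$, $\{x,v_1,\dots,v_{s_2-1}\}$ giving a part of size $s_2$, and the rest intact, to confirm the completed graph is exactly $K_{s_1,s_2,\dots,s_\ell}$.
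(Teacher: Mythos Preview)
Your overall strategy---show that for $v\in B_1$ with $N_{H_{B_1}}(v)=\{v_1,\dots,v_{s_2-1}\}$ the common $A_1$-neighbourhood of $\{x,v,v_1,\dots,v_{s_2-1}\}$ is large enough to contain a copy of $K_{s_1-1,s_3,\dots,s_\ell}$---is the right one, and your part-size arithmetic is correct. However, the conditioning argument has a genuine gap.

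You assert that $H_{B_1}$ and $B_1$ are ``determined by $G[B]$, and are independent of the $A_1$--$B$ edges''. This is false. By definition $B_1$ consists of the $A_1$-good vertices, which is decided by $d_G(v\,|\,A_1)$ for each $v$, i.e.\ by the bipartite edges. More importantly, the construction of $H_{B_1}$ in the proof of Lemma~\ref{lemma:H_B_1-subgraph} proceeds via the auxiliary graph $\Gamma$ on $B_1$, where $\{u,v\}\in E(\Gamma)$ if and only if $d_G(u,v\,|\,A_1)\ge(1+(1-6\gamma)\epsilon)\log_\rho n$; the entire third bullet you invoke is precisely a property of the $A_1$--$B$ edges. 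So once you condition on $H_{B_1}$, the edges from $\{x,v,v_1,\dots,v_{s_2-1}\}$ to $A_1$ are no longer fresh Bernoulli and your Chernoff step is not justified. In particular, knowing that $d_G(v,v_i\,|\,A_1)$ is atypically large biases the neighbourhoods in $A_1$ in a way you have not controlled.

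The paper sidesteps this by never conditioning on $H_{B_1}$ in the probabilistic part. It fixes $v\in B$, exposes only the edges from $v$ to $A_1$, and then takes a union bound over \emph{all} subsets $S\subset N_G(v\,|\,A_1)$ of size $(1+\tfrac{1}{2}\epsilon)\log_\rho n$: for each fixed such $S$, the edges from the remaining vertices $u\in B\setminus\{v\}$ to $S$ are genuinely fresh, so the number of $u$ with fewer than $|S|/\ln^2|S|$ neighbours in $S$ is stochastically dominated by $\mathrm{Bin}(n,n^{-1-\epsilon/3})$, and is at most $\sqrt{\ln n}$ with probability $1-\exp(-\Theta(\epsilon\ln^{3/2}n))$. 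Since $v$ is $A_1$-good there are only $\exp(O(\epsilon\ln(1/\epsilon)\log_\rho n))$ such $S$, so the union bound over $S$ and then over $v$ succeeds. Only afterwards does one observe, deterministically from the third bullet of Lemma~\ref{lemma:H_B_1-subgraph}, that the particular set $S(v,v_1,\dots,v_{s_2-1})=N_G(v,v_1,\dots,v_{s_2-1}\,|\,A_1)$ has size at least $(1+\tfrac12\epsilon)\log_\rho n$ and hence contains one of the subsets already controlled. This yields at most $n\sqrt{\ln n}=o(n\ln n)$ uncompleted pairs. The decoupling via the union bound over $S$ is the missing idea in your argument.
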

\begin{proof}
   Fix $S\subset A_1$, $u\in B$. We say that $u$ \textit{avoids} $S$ if $|N_G(u,S)|<\frac{|S|}{\ln^2|S|}$. The number of neighbours of $u$ in $S$ in $G$ is distributed according to $\text{Bin}(|S|, p)$. Thus, by Lemma \ref{l: chernoff}\eqref{l: chernoff 3}, the probability that $u$ avoids $S$ is at most  $(1-p)^{|S|-|S|/\ln|S|}$. In particular, if $|S|=\left(1+\frac{1}{2}\epsilon\right)\log_{\rho}n$, then the probability that $u$ avoids $S$ is at most
   \begin{align*}
       (1-p)^{|S|-|S|/\ln|S|}\le (1-p)^{\left(1+\frac{1}{3}\epsilon\right)\log_{\rho}n}= n^{-1-\frac{1}{3}\epsilon}.
   \end{align*}
   Fix a vertex $v\in B$ and expose the edges in $G$ from $v$ to $A_1$. Assume that $v$ is $A_1$-good. Fix $S\subset N_{G}(v,A_1)$ of size $\left(1+\frac{1}{2}\epsilon\right)\log_{\rho}n$. Let $X_S$ be the random variable counting the number of vertices $u\in B\setminus\{v\}$ that avoid $S$. Then $X_S$ is stochastically dominated by $\text{Bin}\left(|B|-1, n^{-1-\frac{1}{3}\epsilon}\right)$. Since $|B|-1<n$, by Lemma \ref{l: chernoff}\eqref{l: chernoff 2},
   \begin{align*}
       \mathbb{P}\left(X_S\ge \sqrt{\ln n}\right)\le \left(\frac{e}{\sqrt{\ln n}\cdot n^{\frac{1}{3}\epsilon}}\right)^{\sqrt{\ln n}}\le \exp\left(-\frac{1}{3}\epsilon \ln^{\frac{3}{2}}n\right).
   \end{align*}
   
   Note that since $v$ is $A_1$-good, then $|N_{G}(v,A_1)|\leq (1+(1+2\gamma)\epsilon) \log_{\rho} n$. Hence, by the union bound, the probability that there exists $S\subset N_{G}(v,A_1)$ of size $\left(1+\frac{1}{2}\epsilon\right)\log_{\rho}n$ such that $X_S>\sqrt{\ln n}$ is at most
     \begin{align*}
        &\binom{(1+(1+2\gamma)\epsilon) \log_{\rho} n}{\left(1+\frac{1}{2}\epsilon\right)\log_{\rho} n} \exp\left(-\frac{1}{3}\epsilon \ln^{\frac{3}{2}}n\right) \\ 
        &\le \exp\left(\frac{3}{2}\epsilon \ln\left(\frac{e(1+2\epsilon)}{\frac{3}{2}\epsilon}\right) \log_{\rho} n\right)\exp\left(-\frac{1}{3}\epsilon \ln^{\frac{3}{2}}n\right)=o\left(\frac{1}{n}\right).
    \end{align*}

    Thus, by the union bound \textbf{whp} there are no vertices $v\in B_1$ such that $X_S>\sqrt{\ln n}$ for some $S\subset N_G(v,A_1)$ of size $\left(1 + \frac{1}{2}\epsilon\right)\log_{\rho} n$.

    Now, fix an edge $\{u, v\} \subset B$ in $G$ but not in $H_1$, such that $v \in B_1$. Since $v\in B_1$, we have that $v\in V(H_{B_1})$ and has degree $s_2-1$ in $H_{B_1}$. Therefore, this edge closes a copy of $K_{1, s_2}$ with the neighbours of $v$ in $H_{B_1}$, denote them by $v_1, \dots, v_{s_2 - 1}$. Let $S\coloneqq S(v,v_1,\ldots, v_{s_2-1})$ be the set of common neighbours of $v,v_1,\ldots, v_{s_2-1}$ in $A_1$ in the graph $G$. We note that the size of $S$ must be at least $\left(1 + \frac{1}{2}\epsilon\right) \log_{\rho}n$. Indeed, we have
    \begin{align*}
        |S| &\ge |N_{G}(v, A)| - \sum_{i=1}^{s_2 - 1}|N_{G}(v, A) \setminus N_{G}(v_i, A)| \\
        &\ge |N_{G}(v, A)| - (s_2 - 1)(8 \gamma \epsilon) \log_{\rho}n
        \\&\ge (1 + \epsilon - 8s_2 \gamma \epsilon) \log_{\rho}n
        \\&\ge \left(1 + \frac{1}{2}\epsilon\right) \log_{\rho}n,        
    \end{align*}
     where the second inequality follows from the fact that, by Lemma \ref{lemma:H_B_1-subgraph}, given $\{x,y\}\in E(H_{B_1})$ we have that $d_{G}(x, y| A_1) \ge (1 + (1-6\gamma)\epsilon) \log_{\rho} n$, and the last inequality is true if $\gamma \le \frac{1}{16 s_2}$. Note that if $u$ has more than $\frac{|S|}{\ln^2 |S|}$ neighbours in $S$, then since $H_{A_1}$ is $(1/\ln^3|A_1|)$-dense with respect to $K_{s_1 - 1, s_3, \dots, s_{\ell}}$, we can find a copy of $K_{s_1 - 1, s_3, \dots, s_\ell}$ in $S$ such that the edge $\{v,u\}$, joined with the neighbours of $v$ in $B_1$, completes a copy of $K_{s_1, \dots, s_\ell}$. Thus, by this and the above, \textbf{whp} there are at most $n\sqrt{\ln n}=o(n \ln n)$ non-completed edges from $B$ not induced by $B_2$.    
\end{proof}
    
We now turn our attention to the edges induced by $B_2$. By Lemma \ref{l: ABlemma}, we can take a spanning subgraph $H_{A_2} \subseteq G[A_2]$ which is $(1/\ln|A_2|)$-dense with respect to $K_{s_1 - 1, s_3, \dots, s_\ell}$ and $\{K_\ell, K_{s_1}^{(\ell-1)}\}$-free. Furthermore, let $H_{B_2} \subset G[B_2]$ be a $K_{s_2}$-factor (that is, a spanning subgraph composed of vertex disjoint $K_{s_2}$ --- see \cite{JKV08}) of $G[B_2]$, which is in particular $K_{1, s_2}$-saturated and $K_{s_1, s_2 - s_1 + 1}$-free. Let $H_2$ be the graph on $A_2 \cup B_2$ with the edges of $H_{A_2}$, $H_{B_2}$, and all the edges between $A_2$ and $B_2$ in $G$.
    
Note that we can choose $L$ large enough such that every $s_2 + 1$ vertices from $B_2$ have at least $\frac{a_2p^{s_2+1}}{10}$ neighbours in $A_2$ in $G$. Thus, when we add an edge induced by $B_2$, we close a copy of $K_{1, s_2}$ which has a large neighbourhood in $A_2$, in which we can find a copy of $K_{s_1 - 1, s_3, \dots, s_\ell}$ and form a copy of $K_{s_1,s_2,\ldots, s_{\ell}}$.
    
Observe that \textbf{whp} we have $\Theta(n \log n)$ edges between $A_2$ and $B_1$ in $G$. We now utilise the set $A_3$ to complete them. Indeed, by Lemma \ref{l: chromatic number} \textbf{whp} $k \coloneqq \chi(G[A_2]) = O\left(\frac{a_2}{\ln{a_2}}\right)$. We can then split $A_2$ to $k$ colour classes $A_2^1, \dots, A_2^k$. Thus, there are no edges of $G$ (and thus of $H_{A_2}$) inside $A_2^i$, for every $i \in [k]$. We further partition the vertices of $A_3$ to $2k$ (almost) equal parts $A_3^1, \dots, A_3^{2k}$ of size $a_4 \coloneqq \frac{a_3}{2k} = \Theta(\sqrt{\ln a_2})$. For every $i \in [2k]$, by Lemma \ref{l: ABlemma}, the probability that there exists a subgraph $H_{A_3^i} \subset G[A_3^i]$ which is $(1/\ln\ln\ln^4 n)$-dense with respect to $K_{s_1 - 1, s_3, \dots, s_\ell}$ and $\{K_\ell, K_{s_1}^{(\ell-1)}\}$-free is $1 - o(1)$. Hence, by standard binomial tail bounds, \textbf{whp} there exist distinct $i_1, \dots, i_k$ such that there exists such a subgraph in each $A_3^{i_j}$, for every $j \in [k]$. Therefore, for every $j \in [k]$, there is $H_{A_3^{i_j}} \subseteq G[A_3^{i_j}]$ which is $(1/\ln\ln\ln^4 n)$-dense with respect to $K_{s_1 - 1, s_3, \dots, s_\ell}$ and $\{K_\ell, K_{s_1}^{(\ell-1)}\}$-free. We let $A_3^j=A_3^{i_j}$ and $H_{A_3^j}=H_{A_3^{i_j}}$ for simplifying notations and without loss of generality.
    
Let $H_3$ be the graph with edges of $H_{A_2}$, $H_{A_3^1}, \dots, H_{A_3^k}$, together with the edges between $A_3$ and $B_1$ in $G$, and together with the edges between $A_3^i$ to $A_2^i$, for every $i \in [k]$, in $G(n,p)$. Set $H \coloneqq H_1 \cup H_2 \cup H_3$. 
    
\begin{claim}\label{c: one}
    \textbf{Whp}, $H$ completes all but at most $o(n \log n)$ edges from $B_1$ to $A_2$.
\end{claim}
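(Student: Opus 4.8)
The plan is to complete each edge $\{w,a\}$ with $w \in B_1$ and $a \in A_2$ by routing through the small ``gadget'' set $A_3$ and the pendant structure in $B_1$. First, recall that $w \in B_1$ means $w \in V(H_{B_1})$ and has degree exactly $s_2 - 1$ in $H_{B_1}$ (after the relocation of low-degree vertices), so the edge $\{w,a\}$ together with the $H_{B_1}$-neighbours $v_1,\ldots,v_{s_2-1}$ of $w$ already forms the ``spine'' $K_{1,s_2}$ we need, with centre $w$ and the other side $\{a, v_1, \ldots, v_{s_2-1}\}$; but this is only useful if we can attach a copy of $K_{s_1-1, s_3, \ldots, s_\ell}$ in a common neighbourhood of all $s_2+1$ of these vertices in one of our dense sets. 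Since $a \in A_2$ lies in some colour class $A_2^i$, and $A_3^i$ is completely joined to $A_2^i$ in $H_3$, every vertex of $A_3^i$ is a neighbour of $a$; meanwhile $A_3^i = A_3^{i_j}$ is joined to all of $B_1$ in $H_3$, so every vertex of $A_3^i$ is a common neighbour of $w, v_1, \ldots, v_{s_2-1}$ as well (provided those $v_t$'s that lie in $B_1$ — and the ones in $B_2$ are also fully joined to $A_3$ via $H_3$'s $A_3$–$B_1$ edges... here one must be slightly careful, see below). Thus $A_3^i$ itself is an induced subgraph on $a_4 = \Theta(\sqrt{\ln a_2})$ vertices contained in the common neighbourhood of the whole spine, and since $H_{A_3^i}$ is $(1/\ln\ln\ln^4 n)$-dense with respect to $K_{s_1-1, s_3, \ldots, s_\ell}$, as long as $a_4 \to \infty$ (which it does) we find the required copy of $K_{s_1-1,s_3,\ldots,s_\ell}$ inside $H_{A_3^i}$. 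Together with the spine this gives a copy of $K_{s_1, s_2, \ldots, s_\ell}$, so the edge $\{w,a\}$ is completed.

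The second ingredient is that $H$ is still $K_{s_1,\ldots,s_\ell}$-free, so that these additions do not retroactively create copies — but this is a global property to be verified once at the end rather than per-claim, so for the purposes of this claim I would only check that adding these particular edges is consistent with the construction, namely that the edges $\{w,a\}$ for $w\in B_1$, $a\in A_2$ are genuinely missing from $H$ as built (they are: $H_1$ has no $A_2$ edges, $H_2$ lives on $A_2 \cup B_2$, and $H_3$ only has $A_2$–$A_3$ and $A_3$–$B_1$ edges, not $A_2$–$B_1$ edges) and that completing them one at a time does not destroy $K_{s_1,\ldots,s_\ell}$-freeness — which will follow from the same colouring argument used for $H$ itself and can be deferred.

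The only genuine subtlety, and the step I expect to need the most care, concerns the vertices $v_1, \ldots, v_{s_2-1}$: some of them were moved from $B_1$ to $B_2$ (they had degree $< s_2-1$ in $H_{B_1}$). A vertex $v_t \in B_2$ is joined to $A_3$ only if the construction of $H_3$ includes the $A_3$–$B_2$ edges, but as stated $H_3$ contains the edges between $A_3$ and $B_1$, not $B_2$. There are two clean ways around this: either (i) when we relocate a low-degree vertex from $B_1$ to $B_2$ we do not actually delete its $H_{B_1}$-edges, so its $H_{B_1}$-neighbours still have their full degree-$(s_2-1)$ neighbourhoods inside the original $B_1 \cup \{\text{relocated}\}$, and we simply track that the spine vertices for a given $w \in B_1$ all lie in $V(H_{B_1})$ which is contained in the set joined to $A_3$; or (ii) more robustly, observe that $w$'s neighbours in $H_{B_1}$ need only be common neighbours with $w$ of the $K_{s_1-1,s_3,\ldots,s_\ell}$ we pick, and by Lemma~\ref{lemma:H_B_1-subgraph} together with the $A_1$-goodness of $w$, the set $S = N_G(w, v_1, \ldots, v_{s_2-1} \mid A_1)$ still has size at least $(1+\tfrac12\epsilon)\log_\rho n$ (exactly the computation in Claim~\ref{c: zero}), so we could alternatively complete $\{w,a\}$ using a copy of $K_{s_1-1,s_3,\ldots,s_\ell}$ found inside $H_{A_1}$ restricted to $S$ rather than inside $A_3$ — but then we would need $a$ to be joined to all of $S$, which it is not in general. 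The cleanest resolution is therefore route (i): ensure $V(H_{B_1})$ (all of $B_1$ before relocation, which is the set joined to $A_3$ in $H_3$) contains all spine vertices, so every $v_t$ is $A_3$-joined, every vertex of $A_3^i$ is a common neighbour of the spine and of $a$, and the density argument in $H_{A_3^i}$ goes through. Quantitatively, one then sums: the number of $B_1$–$A_2$ edges in $G$ is \textbf{whp} $\Theta(n \log n)$, and all of them are completed, so in fact the ``$o(n\log n)$'' in the claim is slack — the real content is that the construction of $H_3$ is legitimate (which is where Lemma~\ref{l: chromatic number}, Lemma~\ref{l: ABlemma}, and standard binomial concentration for the existence of $k$ good parts among the $2k$ candidate parts $A_3^1, \ldots, A_3^{2k}$ all come in) and that its edge count is $O(n\log n) \cdot o(1) + O(\mathrm{poly}\log n)$, hence negligible against $n\log_\rho n$.
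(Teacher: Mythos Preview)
Your argument rests on a misreading of the construction of $H_3$. You write that ``$A_3^i$ is completely joined to $A_2^i$ in $H_3$'' and that ``$A_3^i$ \ldots\ is joined to all of $B_1$ in $H_3$, so every vertex of $A_3^i$ is a common neighbour of $w, v_1, \ldots, v_{s_2-1}$''. This is not what the paper says: $H_3$ contains only the edges of $G(n,p)$ between $A_3$ and $B_1$, and only the edges of $G(n,p)$ between $A_3^i$ and $A_2^i$. There is no complete bipartite join. Consequently, for a given spine $\{v,v_1,\ldots,v_{s_2-1},a\}$ with $a\in A_2^i$, a vertex of $A_3^i$ is a common $H$-neighbour of the spine only with probability $p^{s_2+1}$, and the whole of $A_3^i$ is certainly not available. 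Your conclusion that \emph{every} $B_1$--$A_2$ edge is completed (and that the $o(n\log n)$ is slack) is therefore unfounded; indeed, that very slack is the signal that a probabilistic estimate, not a deterministic one, is needed.

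The paper's proof supplies exactly this missing estimate. For fixed $v\in B_1$ and $u\in A_2^i$, the spine $K_{1,s_2}$ consists of $s_2+1$ vertices, and the number of their common $G$-neighbours in $A_3^i$ is $\mathrm{Bin}(a_4,p^{s_2+1})$-distributed, where $a_4=\Theta(\sqrt{\ln a_2})$. By a Chernoff-type bound the probability that this common neighbourhood has fewer than $a_4/\ln^3 a_4$ vertices is at most $(1-p^{s_2+1})^{\Omega(\sqrt{\ln a_2})}=o(1)$; when it is large enough, the $(1/\ln\ln\ln^4 n)$-density of $H_{A_3^i}$ yields the required copy of $K_{s_1-1,s_3,\ldots,s_\ell}$. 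Linearity of expectation over the $O(n\log n)$ pairs and Markov then give the $o(n\log n)$ bound on uncompleted pairs. Your discussion of the $v_t\in B_2$ subtlety is reasonable, but it is secondary to the main gap above: without the correct reading of which bipartite edges $H_3$ actually contains, the density argument in $A_3^i$ cannot even be set up.
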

\begin{proof}
    Fix two vertices $v \in B_1$ and $u \in A_2^i$, for some $i \in [k]$.
        
    Recall that $v$ has $s_2 - 1$ neighbours in $B_1$. Thus, together with $u$, we close a copy of $K_{1,s_2}$. If this copy has more than $\frac{a_4}{\log^3 a_4}$ common neighbours in $A_3^i$, then we can find among these neighbours a copy of $K_{s_1 - 1, s_3, \dots, s_\ell}$ which closes a copy of $K_{s_1, \dots, s_\ell}$. By Lemma \ref{l: chernoff}\eqref{l: chernoff 3}, the probability that this copy of $K_{1, s_2}$ has less than $\frac{a_4}{\log^3 a_4}$ common neighbours is at most
    \begin{align*}
        (1 - p^{s_2 + 1})^{a_4 - a_4 / \ln a_4} \le (1 - p^{s_2 + 1})^{\Omega(\sqrt{\ln a_2})}.
    \end{align*}
    Hence, the expected number of uncompleted edges $e=\{v,u\}$ with $v \in B_1$ and $u \in A_2$ is at most
    \begin{align*}
        (1 - p^{s_1 + 1})^{\Omega(\sqrt{\ln a_2})} \cdot |B_1| \cdot |A_2| = (1 - p^{s_1 + 1})^{\Omega(\sqrt{\ln a_2})} \cdot O(n \log_{\rho}n).
    \end{align*}   
    Therefore, by Markov's inequality, \textbf{whp} the number of uncompleted such pairs is $o(n \log_{\rho}n)$.
\end{proof}
    
\begin{claim}\label{c: two}
    \textbf{Whp}, $e(H) = (1+\Theta(\epsilon)) n \log_{\rho}n$.
\end{claim}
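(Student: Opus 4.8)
\textbf{Whp}, $e(H) = (1+\Theta(\epsilon)) n \log_{\rho}n$.

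The plan is to bound $e(H)$ from above and below by accounting for the contribution of each of the graphs $H_1$, $H_2$, $H_3$, and I expect the dominant term in both directions to come from the edges between $A_1$ and $B$ in $H_1$. First I would record that $|B| = n - o(n)$ and that, since we only deleted from $B_1$ the $O(n/\log n)$ vertices of deficient degree (by Lemma \ref{lemma:H_B_1-subgraph}) together with the $O(n/\log n)$ vertices of $B_2$, we still have $|B_1| = n - o(n)$. For the lower bound: every vertex $v \in B_1$ is $A_1$-good, hence $d_G(v \mid A_1) \ge (1+\epsilon)\log_{\rho} n$, and all these edges lie in $H_1$; summing over $B_1$ gives at least $(1+\epsilon)(1-o(1))\, n \log_{\rho} n$ edges between $A_1$ and $B_1$ alone. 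For the upper bound, I would split the edges of $H$ into four groups and bound each:

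\begin{enumerate}[label=(\roman*)]
    \item Edges between $A_1$ and $B$: for $A_1$-good vertices $v$, $d_G(v\mid A_1) \le (1+(1+2\gamma)\epsilon)\log_{\rho} n = (1+\Theta(\epsilon))\log_{\rho} n$ by definition of the interval $I$; for the $O(n/\log n)$ bad vertices, the trivial bound $d_G(v\mid A_1)\le |A_1| = \frac1p(1+(1+\gamma)\epsilon)\log_{\rho} n$ contributes only $O(n)$ in total. Hence this group has $(1+\Theta(\epsilon))\,n\log_{\rho} n$ edges.
    \item Edges inside $B$, i.e.\ of $H_{B_1}$ and $H_{B_2}$: $H_{B_1}$ has maximum degree $s_2-1$ and $H_{B_2}$ is a $K_{s_2}$-factor, so together they contribute $O_{s_1,\dots,s_\ell}(n) = o(n\log_\rho n)$ edges.
    \item Edges incident to $A_2 \cup A_3$: since $|A_2| = L\log_\rho n$ and $|A_3| = a_3 = o(\log_\rho n)$, the number of such edges is at most $(|A_2|+|A_3|)\cdot n = O_{L}(n\log_\rho n)$; but this would spoil the constant. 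So here I would be more careful and count only the edges actually placed: between $A_2$ and $B_2$ there are at most $|A_2|\cdot|B_2| = O(\log_\rho n)\cdot O(n/\log n) = o(n)$; between $A_3$ and $B_1$ there are at most $|A_3|\cdot n = o(n\log_\rho n)$ since $a_3 = a_2/\sqrt{\ln a_2} = o(\log_\rho n)$; all edges internal to $A_1\cup A_2\cup A_3$ or between $A_3^i$ and $A_2^i$ number $O(\log^2 n) = o(n)$. Hence this group is $o(n\log_\rho n)$.
    \item $H_{A_1}$ contributes $O(\log^2 n)$ edges.
\end{enumerate}

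Combining (i)--(iv), $e(H) \le (1+\Theta(\epsilon))\, n\log_\rho n$, and combining with the lower bound from $B_1$, $e(H) = (1+\Theta(\epsilon))\, n\log_\rho n$. The only genuinely delicate point is item (iii): one must verify that $a_3 = o(\log_\rho n)$ (which holds since $a_3 = L\log_\rho n / \sqrt{\ln(L\log_\rho n)} \to \infty$ more slowly than $\log_\rho n$) so that the $A_3$-to-$B_1$ edges, of which there are genuinely $\Theta(|A_3|\cdot n)$, do not contribute at the scale $n\log_\rho n$; every other bound is a routine application of $|B_2| = O(n/\log n)$ and the bounded degrees inside $B$. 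I would also remark that the hidden constant multiplying $\epsilon$ depends only on $\gamma$ (through the width $(1+2\gamma)\epsilon$ of $I$) and not on $n$, which is what is needed for the conclusion, after letting $\epsilon \to 0$ at the end of the proof of Theorem \ref{th: complete mp}, to give the claimed $(1+o(1))\, n\log_{\rho} n$.
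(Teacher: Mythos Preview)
Your proposal is correct and follows essentially the same decomposition as the paper: both count edges by summing the contributions of $H_1$, $H_2$, $H_3$ and observe that only the $A_1$--$B$ edges matter at order $n\log_\rho n$. The only minor difference is that the paper bounds the $A_1$--$B$ edges in one stroke by the Chernoff estimate $(1+o(1))a_1np = (1+o(1))(1+(1+\gamma)\epsilon)n\log_\rho n$, whereas you instead split into $A_1$-good and $A_1$-bad vertices and use the deterministic per-vertex bounds coming from the interval $I$; your route has the advantage that it simultaneously yields the lower bound (which the paper's proof of the claim actually omits, giving only the upper bound).
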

\begin{proof}
    Indeed, \textbf{whp}
    \begin{align*}
        e(H) &\le e(H_1) + e(H_2) + e(H_3) \\
        &\le (1+o(1))a_1 np + e(H_{B_1}) + e(H_{A_1}) + a_2 |B_2| p + e(H_{A_2}) + e(H_{A_3}) + a_3 np \\
        &\le (1+o(1))(1+(s_1+1)\epsilon) n \log_{\rho}n= (1+\Theta(\epsilon))n\log_{\rho}n.
    \end{align*}
\end{proof}
    
\begin{claim}\label{c: three}
    $H$ is $K_{s_1, \dots, s_\ell}$-free.
\end{claim}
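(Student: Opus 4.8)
The plan is to show that $H$, which is the union $H_1 \cup H_2 \cup H_3$ over the vertex set $A_1 \cup A_2 \cup A_3 \cup B$, cannot contain a copy of $K_{s_1,\ldots,s_\ell}$. First I would record the structure of $H$ by listing exactly which edges are present across and within the blocks $A_1, A_2, A_3, B_1, B_2$: inside $A_1$ only $H_{A_1}$ (which is $\{K_\ell, K_{s_1}^{(\ell-1)}\}$-free), inside $A_2$ only $H_{A_2}$ (same freeness), inside each colour class $A_3^j$ only $H_{A_3^j}$ (same freeness) and no edges between distinct $A_3^j$'s nor between an $A_3^j$ and the $A_2^{i}$ with $i \ne j$; inside $B_1$ only $H_{B_1}$ (which is $K_{s_1,s_2-s_1+1}$-free and has max degree $s_2-1$), inside $B_2$ only $H_{B_2}$ (a $K_{s_2}$-factor, hence $K_{s_1,s_2-s_1+1}$-free and $K_{1,s_2}$-saturated in the sense of having no $K_{1,s_2}$); there are no edges at all between $B_1$ and $B_2$, none between $A_1$ and $A_2 \cup A_3$, none between $A_2$ and $A_3^j$ except for $j=i$, none between $A_3$ and $B_2$; and the complete bipartite-in-$G$ edge sets between $A_1$ and $B$, between $A_2$ and $B_2$, between $A_3$ and $B_1$, and between each $A_3^j$ and $A_2^j$ are present.

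Next, suppose towards a contradiction that $\tilde F \cong K_{s_1,\ldots,s_\ell}$ sits inside $H$, and let $W = V(\tilde F)$. The key observation is that $\tilde F$ is $\ell$-partite and $(\ell-1)$-vertex-connected-ish; more usefully, it has chromatic number $\ell$ and every two vertices in different parts are adjacent, so it cannot be split across two vertex sets with no edges between them unless the whole of one "side" lies in a single part. I would analyse $W \cap B_1$, $W \cap B_2$, $W \cap A_1$, $W \cap A_2$, $W \cap A_3$. Since there are no $B_1$--$B_2$ edges, one of $W\cap B_1$, $W\cap B_2$ must be contained in a single colour class of $\tilde F$ (an independent set), and similarly the absence of $A_1$--$(A_2\cup A_3)$ edges forces $W \cap A_1$ or $W \cap (A_2 \cup A_3)$ to be independent in $\tilde F$; and the cross conditions among $A_2, A_3$ blocks force further collapses. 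The heart of the argument is then: whatever independent set(s) we peel off, the remaining induced subgraph of $\tilde F$ must embed into one of the "dense" host pieces, but each such piece was built (via Lemma~\ref{l: ABlemma}, applied with $\mathcal F = \{K_\ell, K_{s_1}^{(\ell-1)}\}$ or with the $K_{s_1,s_2-s_1+1}$ constraint in $B_1$, or the $K_{s_2}$-factor structure in $B_2$) precisely so as to exclude it. Concretely, removing one independent set from $K_{s_1,\ldots,s_\ell}$ leaves a graph that still contains $K_\ell$ or $K_{s_1}^{(\ell-1)}$ (whenever $\ell \ge 3$ and the $s_i$ are what they are), which contradicts the $\{K_\ell, K_{s_1}^{(\ell-1)}\}$-freeness of whichever $A$-piece it lands in; and the portion in $B_1 \cup B_2$ being a large bipartite-ish chunk contradicts $K_{s_1,s_2-s_1+1}$-freeness or the bounded degree $s_2-1$ in $H_{B_1}$.

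More carefully, I would organise the case analysis by how $W$ distributes among $A_1, A_2, A_3, B_1, B_2$, using that any copy of $K_{s_1,\dots,s_\ell}$ has the property that deleting fewer than $s_\ell$ vertices still leaves a copy of $K_{s_1,\dots,s_{\ell-1}}$, hence something two-vertex-connected and non-$K_{s_1-1,s_3,\dots,s_\ell}$-degenerate (indeed containing $K_{s_1}^{(\ell-1)}$ or $K_\ell$). The decisive point is that $W$ cannot be absorbed "mostly in $B$": the $B$-side edges of $H$ within $B_1$ have max degree $s_2-1$, so the induced subgraph $\tilde F[W \cap B_1]$ has max degree $\le s_2-1$, forcing $W\cap B_1$ to meet at most... (here one argues that a vertex of $\tilde F$ not in the largest part has degree $\ge (s_1+s_2) - $ its own part $> s_2-1$ when $\ell \ge 3$), so $W \cap B_1$ lies in a union of at most two parts of $\tilde F$, and in fact essentially in one part plus a few stragglers; the same for $W \cap B_2$ via the $K_{s_2}$-factor's component structure. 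After these reductions, the "core" of $\tilde F$ — a copy of $K_{s_1}^{(\ell-1)}$ or $K_\ell$ — is forced to live entirely inside one of $H_{A_1}, H_{A_2}, H_{A_3^j}$, each of which is $\{K_\ell, K_{s_1}^{(\ell-1)}\}$-free, the contradiction. The main obstacle I anticipate is the bookkeeping in the case distinction — precisely controlling how $W$ can straddle $A_3^j$, $A_2^j$, $B_1$ simultaneously, and ruling out a copy of $\tilde F$ that uses a few vertices from each block in a way that dodges every individual freeness property; handling this cleanly will require exploiting that $\tilde F$ has exactly $\ell$ parts, that $\ell \ge 3$, and that between non-matching $A$-blocks there are no edges, so at most one non-$B$ block can contribute a "large" piece.
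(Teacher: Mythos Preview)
Your overall strategy matches the paper's: both argue by contradiction and analyse how a hypothetical copy $\tilde F\cong K_{s_1,\ldots,s_\ell}$ distributes across the blocks $A_1, A_2, A_3, B_1, B_2$, exploiting the freeness properties built into $H_{A_1}, H_{A_2}, H_{A_3^j}, H_{B_1}, H_{B_2}$ together with the absence of certain cross-block edges. The difference is organisational. The paper does not try to control $W\cap B_1$ or $W\cap B_2$ directly via degrees; instead it peels off the $A$-blocks in the order $A_3$, then $A_2$, then $A_1$. In each case it uses that the relevant $H[A_i]$ is $K_\ell$-free to force at least one \emph{entire} colour class of $\tilde F$ into the neighbouring block(s), then uses $K_{s_1}^{(\ell-1)}$-freeness of $H[A_i]$ to force an additional vertex from another part into that block, and only then invokes the structure of $H_{B_1}$ (max degree $s_2-1$, $K_{s_1,s_2-s_1+1}$-free) or of $H_{B_2}$ (a $K_{s_2}$-factor) to bound how much of $\tilde F$ can sit in $B$, pushing a copy of $K_{s_1}^{(\ell-1)}$ back into $A_i$ for the contradiction.

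There is a genuine gap in your $B_1$ step. From $\Delta(H_{B_1})\le s_2-1$ you deduce that $W\cap B_1$ lies in a union of at most two parts of $\tilde F$. This does not follow. For instance, with $\ell=4$, $s_1=1$, $s_2=5$, the constraints on $H_{B_1}$ are just $\Delta\le 4$ and $K_{1,5}$-freeness, and a copy of $K_4$ (one vertex from each part) satisfies both; nothing in your argument rules this out. Your degree observation only shows that each $v\in W\cap B_1$ has some $\tilde F$-neighbour outside $B_1$ (necessarily in $A_1\cup A_3$); it does not bound the number of parts meeting $B_1$. The paper sidesteps this entirely by reversing the order of attack: it first uses $K_\ell$-freeness of the $A$-block under consideration to pin a full part into $B_1$ (or $B_2$, or $A_2$), and only then uses the $B$-side constraints to show that this part must have size $s_1$ and can be accompanied by at most $s_2-s_1$ further $B$-vertices, so that each of the remaining $\ell-1$ parts still contributes at least $s_1$ vertices to the $A$-block. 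Reorganising your case analysis along these lines --- $A$-blocks first, $B$-blocks second --- closes the gap with the same ingredients you already listed.
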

\begin{proof}
    Suppose towards contradiction that there exists a copy of $K_{s_1, \dots, s_\ell}$ in $H$, denote this copy by $F$.
        
    Assume first that $V(F) \cap A_3 \neq \varnothing$. Set $\Tilde{F} \coloneqq V(F) \cap A_3$. Since $H[A_3]$ is $K_\ell$-free, then at least one full part of $K_{s_1, \dots, s_\ell}$ must come from $B_1 \cup A_2$, as all its vertices must be adjacent to every vertex of $\Tilde{F}$. If only one full part comes from $B_1 \cup A_2$, then $H[A_3]$ must contain a copy of $K_{s_1}^{(\ell-1)}$ --- a contradiction, as by construction $H[A_3]$ is composed of vertex disjoint subsets, each of which is $K_{s_1}^{(\ell-1)}$ free. Thus, $B_1\cup A_2$ contains at least one full part as well as an additional vertex from another part of $K_{s_1,\dots, s_{\ell}}$. As this additional vertex is adjacent to the full part lying in $B_1\cup A_2$ and there are no edges between $B_1$ and $A_2$, all these vertices must belong exclusively to $B_1$ or exclusively to $A_2$. If they belong to $B_1$, then since the maximum degree of $H_{B_1}$ is $s_2-1$ we have that the full part must be of size $s_1$. Furthermore, we have that $H_{B_1}$ is $K_{s_1,s_2-s_1+1}$-free. Thus, if they belong to $B_1$, $H[A_3]$ must contain $K_{s_1}^{(\ell-1)}$ in order to complete $K_{s_1,\dots, s_{\ell}}$, as there are $\ell-1$ parts missing at least $s_1$ vertices, once again leading to contradiction. Finally, if they belong to $A_2$, then they should be split between different independent sets $A_2^i$ and $A_2^j$, but there are no vertices in $A_3$ which are adjacent to both $A_2^i$ and $A_2^j$ in $H$ --- a contradiction.
    
    Let us now assume that $V(F) \cap A_3 = \varnothing$ and $V(F) \cap A_2 \neq \varnothing$. Set $\Tilde{F} \coloneqq V(F) \cap A_2$. Since $H[A_2]$ is $K_\ell$-free, then a full part of $K_{s_1, \dots, s_\ell}$ must come from $B_2$ and, as before, at least one additional vertex. This vertex has neighbours in the full part, and thus, since $H[B_2]$ is a $K_{s_2}$-factor, the full part must be of size $s_1$. Moreover, there are at most $s_2-s_1$ vertices of $F$ in $B_2$ that do not belong to this part of size $s_1$. Therefore, each part of $\Tilde{F}$ must contain at least $s_1$ vertices, creating a copy of $K_{s_1}^{(\ell-1)}$ --- a contradiction to the fact that $H[A_2]$ is $K_{s_1}^{(\ell-1)}$-free.  Similarly, if $V(F)\cap A_3=V(F)\cap A_2=\varnothing$ and $V(F)\cap A_1\neq \varnothing$, since $H[A_1]$ is $K_{\ell}$-free, we once again obtain a contradiction. Thus, we may assume that $V(F)\subseteq B$. However, 
    since there are no edges between $B_1$ and $B_2$ in $H$, we must either $V(F)\subseteq B_1$ or $V(F)\subseteq B_2$, contradicting the fact that $H[B_1]$ and $H[B_2]$ are $K_{s_1,\ldots,s_{\ell}}$-free.
    \end{proof}

In conclusion, \textbf{whp} by Claim \ref{c: three} $H$ is $K_{s_1,\ldots, s_{\ell}}$-free; by Claims \ref{c: zero} and \ref{c: one} \textbf{whp} $H$ completes all but at most $o(n\ln n)$ of the edges, and adding each of these edges that does not close a copy of $F$ until none remains results in a $K_{s_1,\ldots, s_{\ell}}$-saturated graph; finally, by Claim \ref{c: two} and by the additional edges mentioned before, \textbf{whp} $e(H)=\left(1+\Theta(\epsilon)\right)n\log_{\rho}n+o(n\ln n)=\left(1+\Theta(\epsilon)\right)n\log_{\rho}n$. As we may choose $\epsilon$ arbitrarily small, this completes the proof of Theorem~\ref{th: complete mp}.

\subsection{Proof of Lemma \ref{lemma:H_B_1-subgraph}}
We build such a subgraph iteratively. In each iteration, we find a large matching in $G[B_1]$, which we add to the subgraph, such that the matching satisfies the following:
\begin{enumerate}
    \item The union of the previous matching together with this matching does not induce a copy of $K_{s_1,s_2-s_1+1}$; and,
    \item the endpoints of every edge in the matching have a large common neighbourhood in $A_1$ in $G$.
\end{enumerate}

Denote by $\Gamma$ the auxiliary graph with vertex set $B_1$ and the set of edges defined as follows. For every two vertices $v\neq u$ in $B_1$, 
\begin{align*}
    \{u, v\} \in E(\Gamma) \Longleftrightarrow d_{G}(u, v| A_1) \ge (1 + (1-6\gamma)\epsilon) \log_{\rho} n.
\end{align*}
Set $p' = 1 - (1-p)^{1/(s_2 - 1)}$. For every $i \in [s_2 - 1]$, denote by $\Gamma_i$ the subgraph of $\Gamma$ obtained by retaining every edge independently with probability $p'$. We have $\Gamma_p\coloneqq \Gamma \cap G(n,p)$. Note that $\Gamma_p$ has the same distribution as $\bigcup_{i=1}^{s_2 - 1} \Gamma_i$.

We will in fact prove the following equivalent lemma:
\begin{lemma}
    \textbf{Whp} there exists $H_{B_1} \subseteq \Gamma_p$ which is $K_{s_1, s_2 - s_1 + 1}$-free, has maximum degree $s_2-1$, and all but $O\left(\frac{n}{\log n}\right)$ of its vertices have degree $s_2-1$.
\end{lemma}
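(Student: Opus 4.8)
The plan is to build $H_{B_1}$ inside $\Gamma_p$ by exposing the $s_2-1$ independent copies $\Gamma_1,\dots,\Gamma_{s_2-1}$ one at a time and extracting a near-perfect matching from each. The key point is that, by construction of $\Gamma$, every edge of $\Gamma_p$ automatically has the required common neighbourhood property in $A_1$, so the only thing to control is (a) that each $\Gamma_i$ has a matching covering all but $O(n/\log n)$ vertices, and (b) that the union of the $s_2-1$ matchings is $K_{s_1,s_2-s_1+1}$-free. For (a), the crucial input is that $\Gamma$ itself is a \emph{dense} graph: I would first show that \textbf{whp} all but $O(n/\log n)$ vertices of $B_1$ have $\Gamma$-degree $(1-o(1))|B_1|$. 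Indeed, for a fixed pair $u,v\in B_1$, conditioning on the (good) values $d_G(u|A_1),d_G(v|A_1)\in I$, the codegree $d_G(u,v|A_1)$ is a sum of $|A_1|$ independent indicators with mean roughly $p\cdot d_G(v|A_1)\ge p(1+\epsilon)\log_\rho n$; since $a_1=\tfrac1p(1+(1+\gamma)\epsilon)\log_\rho n$, the threshold $(1+(1-6\gamma)\epsilon)\log_\rho n$ sits strictly below this conditional mean, so Lemma~\ref{l: chernoff}\eqref{l: chernoff 1} gives $\mathbb{P}(\{u,v\}\notin E(\Gamma))\le n^{-c}$ for a constant $c>0$ (the exponent of the Chernoff bound is $\Theta(\gamma^2\epsilon^2\log_\rho n)$). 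A first-moment/Markov argument then bounds the number of vertices of $B_1$ with more than, say, $|B_1|/\log n$ non-neighbours in $\Gamma$ by $O(n/\log n)$; delete those and call the remaining set $B_1'$, on which $\Gamma$ has minimum degree $(1-o(1))|B_1'|$.

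Next I would pass from $\Gamma$ to the $\Gamma_i$'s: since $\Gamma[B_1']$ has minimum degree $(1-o(1))|B_1'|$ and $p'$ is a fixed constant, a standard concentration argument (Chernoff plus a union bound over vertices) shows that \textbf{whp} each $\Gamma_i[B_1']$ also has minimum degree $(1-o(1))|B_1'|$, in particular $\ge \tfrac12|B_1'|$. A graph on $m$ vertices with minimum degree $\ge m/2$ has a Hamilton path (Dirac), hence a matching missing at most one vertex; more simply, a greedy argument in such a dense graph produces a matching covering all but $O(1)$ vertices. Actually I want a matching covering \emph{almost} all of $B_1$, not $B_1'$, but the $O(n/\log n)$ vertices dropped into $B_1\setminus B_1'$ are harmless since the lemma already allows $O(n/\log n)$ low-degree vertices. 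So iteratively: pick a near-perfect matching $M_1\subseteq\Gamma_1[B_1']$; then pick $M_2\subseteq\Gamma_2[B_1']$, and so on. To get maximum degree exactly $s_2-1$ this is automatic since we take one edge per vertex per round; vertices uncovered in some round simply end up with degree $<s_2-1$, and there are only $O(n/\log n)$ of them in total.

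The main obstacle, and the one requiring the genuinely new idea, is ensuring (b): that after adding all $s_2-1$ matchings the union contains no $K_{s_1,s_2-s_1+1}$. Since the final graph has maximum degree $s_2-1$, any copy of $K_{s_1,s_2-s_1+1}$ must have $s_1\ge 2$ and its side of size $s_1$ must consist of vertices of degree exactly $s_2-1$ whose neighbourhoods together contain the other $s_2-s_1+1$ vertices. The way to preclude this is to choose the matchings \emph{sequentially and cleverly}, as the proof's outline indicates: when selecting $M_i$, we must avoid edges $\{u,v\}$ that, together with the already-chosen matchings $M_1,\dots,M_{i-1}$, would complete a $K_{s_1,s_2-s_1+1}$. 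The number of ``forbidden'' edges at a given vertex $v$ is bounded: given the $\le (i-1)\le s_2-2$ previously-chosen neighbours of $v$ and of its potential partners, there are only $O_{s_1,s_2}(1)$ ways to extend to a $K_{s_1,s_2-s_1+1}$, so each vertex has only $O(1)$ forbidden partners. Since $\Gamma_i[B_1']$ has minimum degree $(1-o(1))|B_1'|$, deleting the $O(1)$ forbidden edges at each vertex still leaves minimum degree $(1-o(1))|B_1'|$, so the near-perfect matching argument goes through in the pruned graph. One must be slightly careful that ``forbidden'' is well-defined round by round — i.e. that a $K_{s_1,s_2-s_1+1}$ cannot be created ``all at once'' by $M_i$ using several new edges — but since each vertex gains exactly one new edge in round $i$, any newly created copy uses at most one edge of $M_i$ at each vertex, and in particular at least one edge of $M_i$; forbidding, for each new edge, the completion of a copy whose other edges are already present handles exactly this case, and a copy using two or more edges of $M_i$ is killed because forbidding it at the first of those edges (in the round order, refined by an arbitrary order within $M_i$) suffices. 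Assembling these pieces — dense $\Gamma$, dense $\Gamma_i$, round-by-round pruning of $O(1)$ forbidden edges, near-perfect matchings — yields $H_{B_1}=\bigcup_{i=1}^{s_2-1}M_i$ with the three required properties.
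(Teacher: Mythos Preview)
Your argument has a fundamental error in step (a). You assert that the $\Gamma$-edge threshold $(1+(1-6\gamma)\epsilon)\log_\rho n$ lies strictly \emph{below} the conditional mean $p\cdot d_G(v\mid A_1)\ge p(1+\epsilon)\log_\rho n$ of the codegree. But for any $p<1$ and small $\epsilon,\gamma$ one has $p(1+\epsilon)<1<1+(1-6\gamma)\epsilon$: the threshold is well \emph{above} the mean. For instance at $p=1/2$ the mean codegree (whether you condition on one neighbourhood, both, or neither) is about $\tfrac12(1+\epsilon)\log_2 n$, while the threshold is about $(1+\epsilon)\log_2 n$, roughly double. Chernoff therefore gives $\mathbb{P}(\{u,v\}\in E(\Gamma))\le n^{-c}$, the \emph{opposite} of what you wrote; $\Gamma$ is a sparse graph with typical degrees of order $n^{1-c}$, and your Dirac/greedy near-perfect-matching argument for $\Gamma_i[B_1']$ collapses.

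This is exactly the obstacle the paper's proof is built to overcome. Instead of density, the paper establishes directly that \textbf{whp} $\alpha(\Gamma_i)\le n/\log_\rho n$ via a Hamming-ball covering argument: one maps $\Gamma$ into the auxiliary graph $G_W$ on $W=\{x\subseteq A_1:|x|\in I\}$ by $\phi(v)=N_G(v\mid A_1)$, finds $m'=n/\log^3 n$ centres whose balls cover $W$ (Claim~\ref{claim:hamming-balls}, relying on Lemma~\ref{l: many neighbours} --- this is precisely where $p\ge\tfrac12$ is needed), and shows each ball induces a clique in $\widetilde G_W$, forcing $\Omega(n\log n)$ edges inside any $m=n/\log n$ vertices and hence no independent set of that size survives $p'$-percolation. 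The matching $M_i$ is then taken maximal in $\Gamma_i$ subject to $\bigcup_{j\le i}M_j$ being $C_4$-free; the unmatched set has maximum $\Gamma_i$-degree at most $s_2^3$ (Claim~\ref{claim:I-maximal-degree}), so its size is at most $(s_2^3+1)\,\alpha(\Gamma_i)=O(n/\log n)$. Your treatment of~(b) --- forbidding $O(1)$ partners per vertex per round --- is in the same spirit as the paper's $C_4$-avoidance and would work, but it cannot rescue the argument without an independent reason that the matchings are large, and that requires the independence-number machinery you bypassed.
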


We consider two cases separately. In the first case, we assume that $p=\frac{1}{2}$. While some details will be different when $p>\frac{1}{2}$, we believe the key ideas, in particular the ball-covering technique (see Claim \ref{claim:hamming-balls}) are clearer in this case. Afterwards, we mention how to complete the proof for the range of $p>\frac{1}{2}$, where, in particular, Claim \ref{claim:phi-1-to-1} no longer necessarily holds.

\subsubsection[Proof of Lemma \ref{lemma:H_B_1-subgraph}, p=0.5]{Proof of Lemma \ref{lemma:H_B_1-subgraph}, $p=0.5$}
    Recall that we are seeking a subgraph of $\Gamma_p$ which is $K_{s_1,s_2-s_1+1}$-free, has maximum degree $s_2-1$ and all but $O\left(\frac{n}{\log n}\right)$ of its vertices are of degree $s_2-1$. 

    As for the first requirement, note that a graph whose maximum degree is $s_2-1$ and is $C_4$-free, is $K_{s_1,s_2-s_1+1}$-free graph. Indeed, if $s_1=s_2$ or $s_1=1$, we have that $K_{s_1,s_2-s_1+1}=K_{1,s_2}$, and thus asking for the maximum degree to be $s_2-1$ suffices. Otherwise, $1<s_1< s_2$ and we have that any copy of $K_{s_1,s_2-s_1+1}$ contains $K_{2,2}$, that is, $C_4$.
    
    As for the second requirement, our strategy will then be to find a sufficiently large matching $M_i$ in $\Gamma_i$, for every $i \in [s_2 - 1]$, such that there are no copies of $C_4$ in $M:=\bigcup_{i=1}^{s_2 - 1} M_i$. We will show that for every $i\in [s_2-1]$, there are at most $O\left(\frac{n}{\log n}\right)$ vertices that are unmatched. Thus, the subgraph whose edges are the edges of $M$ would then be the desired subgraph of $\Gamma_p$.
    
     Fix $i \in \left\{2, \ldots s_2 - 1\right\}$. Assume that there exist edge-disjoint matchings $M_1 \subseteq \Gamma_1, \dots, M_{i-1} \subseteq \Gamma_{i-1}$ such that $\bigcup_{j=1}^{i - 1} M_j$ is $C_4$-free. We will find a matching $M_i \subseteq \Gamma_i$ such that $\bigcup_{j=1}^{i} M_j$ is $C_4$-free. 

     Recall that $I = \Big[\left(1 + \epsilon\right) \log_{\rho} n,\ \left(1 + (1+2\gamma)\epsilon\right) \log_{\rho} n\Big]$. Set
    \[
        W = \{x \subseteq A_1 \colon |x| \in I\}.
    \]    
    Denote by $G_W$ the graph with vertex set $W$ and the set of edges defined as follows. For every $x \neq y \in W$, 
    \begin{align}\label{align:G_W-edges-def}
        \{x,y\} \in E(G_W) \Longleftrightarrow |x \cap y| \ge (1+(1-6\gamma)\epsilon) \log_{\rho} n.
    \end{align}
    Define $\phi:V(\Gamma)\to V(G_W)$ such that $\phi(v)=N_{G}(v| A_1)$ for every $v \in V(\Gamma)$. Note that this definition is valid since if $v \in V(\Gamma)$, then $v$ is $A_1$-good and thus $N_{G}(v| A_1) \in W$.

    \begin{claim}\label{claim:phi-1-to-1}
        \textbf{Whp} $\phi$ is injective.
    \end{claim}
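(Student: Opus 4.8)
The plan is to show that, whp, no two distinct vertices $v \neq u \in V(\Gamma) = B_1$ satisfy $N_G(v \mid A_1) = N_G(u \mid A_1)$ as subsets of $A_1$; this is exactly injectivity of $\phi$. First I would fix a pair $u \neq v$ in $B$ and a target subset $x \subseteq A_1$ of size in $I$, and bound the probability that both $N_G(u \mid A_1) = x$ and $N_G(v \mid A_1) = x$. Since the edges from $u$ to $A_1$ and from $v$ to $A_1$ are independent, and each is an independent $\mathrm{Bin}(|A_1|, p)$-type exposure, the probability that $N_G(u \mid A_1) = x$ equals $p^{|x|}(1-p)^{a_1 - |x|}$, and likewise for $v$; so the joint probability is $\bigl(p^{|x|}(1-p)^{a_1-|x|}\bigr)^2$. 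Because $|x| \le (1+(1+2\gamma)\epsilon)\log_\rho n$ and $a_1 = \frac1p(1+(1+\gamma)\epsilon)\log_\rho n$, one checks $(1-p)^{a_1 - |x|} \le (1-p)^{c \log_\rho n} = n^{-c}$ for a constant $c > 0$ depending on $p, \gamma, \epsilon$; crucially, by the choice of $a_1$ the exponent $a_1 - |x|$ is bounded below by a positive multiple of $\log_\rho n$ when $\epsilon, \gamma$ are small (this is where $p \ge 1/2$, which forces $\log_{1-p}p \le 1$ and hence makes $a_1$ large enough relative to $|x|$, gets used, mirroring Lemma~\ref{l: many neighbours}).

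Next I would union-bound. Summing the joint probability $\bigl(p^{|x|}(1-p)^{a_1-|x|}\bigr)^2$ over the at most $2^{a_1} = n^{O(1)}$ choices of $x \subseteq A_1$ (in fact over $x$ with $|x| \in I$, but the crude bound suffices) gives, for a fixed pair $\{u,v\}$, a probability of a ``collision'' at most $n^{O(1)} \cdot n^{-2c'}$ for the appropriate constant; the point is that the factor $(1-p)^{a_1-|x|}$ appears \emph{squared}, so even after paying $n^{O(1)}$ for the choice of $x$ and $n^2$ for the choice of the pair $\{u,v\}$, the total is $o(1)$ provided $\epsilon$ is chosen small enough that $2(a_1 - \max_{|x|\in I}|x|) \cdot \log_\rho(1/(1-p))$ — i.e. twice the ``slack'' in the exponent — strictly exceeds the number $2$ coming from the pair together with the $O(1)$ from enumerating $x$. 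A clean way to organise this: for each $u \in B$ write $\mathbb{P}(N_G(u\mid A_1) = x) \le (1-p)^{a_1 - |x|}$, so $\mathbb{P}(\exists\, u\neq v,\ N_G(u\mid A_1)=N_G(v\mid A_1)) \le \sum_{x} \binom{n}{2} \bigl((1-p)^{a_1-|x|}\bigr)^2$, then bound $\sum_x$ by $2^{a_1}$ times the largest term and verify the resulting exponent of $n$ is negative.

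The main obstacle is purely bookkeeping with the constants: one must verify that the slack $a_1 - |x| \ge \delta \log_\rho n$ for some fixed $\delta > 0$ uniformly over $x$ with $|x| \in I$, and that $\delta$ can be taken large enough (by shrinking $\epsilon$, $\gamma$) so that $2\delta \log_\rho\frac{1}{1-p} > 2$, which after accounting for the $2^{a_1}$ enumeration of $x$ — note $a_1 = O(\log n)$ so $2^{a_1} = n^{O(1)}$, and this $O(1)$ must also be absorbed — still leaves a negative exponent. Since $a_1 = \frac1p(1+(1+\gamma)\epsilon)\log_\rho n$ and $\max_{|x|\in I}|x| = (1+(1+2\gamma)\epsilon)\log_\rho n$, we have $a_1 - \max|x| = \bigl(\frac1p - 1 + (\frac1p - 1)(1+\gamma)\epsilon - \gamma\epsilon\bigr)\log_\rho n$, which for $p \ge 1/2$ (so $\frac1p - 1 \ge 0$, indeed $\ge 0$ with room when $p>1/2$; at $p=1/2$ it equals $(1+(1+\gamma)\epsilon - \gamma\epsilon)\log_\rho n \ge \log_\rho n$) is at least a constant multiple of $\log_\rho n$ — in fact at $p = 1/2$ the slack is essentially $\log_2 n$, giving exponent gain $n^{-2}$ per $x$ before enumeration, and the $2^{a_1} = n^{O(L\epsilon)}$ and $n^2$ factors are then dominated for $\epsilon$ small. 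Collecting these estimates and applying the union bound completes the proof that $\phi$ is whp injective.
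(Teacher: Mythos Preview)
Your overall plan (union bound over pairs, control the probability that two vertices have identical $A_1$-neighbourhoods) is the right one, but your bookkeeping contains a genuine gap that breaks the argument as written.

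The claim is stated \emph{only} in the subsection where $p=\tfrac12$. The paper exploits this heavily: at $p=\tfrac12$ one has $p^{k}(1-p)^{a_1-k}=(1/2)^{a_1}$ for \emph{every} $k$, so conditioning on $N_G(u\mid A_1)$ gives $\mathbb{P}\bigl(N_G(v\mid A_1)=N_G(u\mid A_1)\bigr)=(1/2)^{a_1}=n^{-2(1+(1+\gamma)\epsilon)}=o(1/n^2)$, and a union bound over pairs finishes. No enumeration over a target set $x$ is needed at all.

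Your route instead enumerates over $x\subseteq A_1$ and then drops the factor $p^{|x|}$, bounding $\mathbb{P}(N_G(u\mid A_1)=x)\le (1-p)^{a_1-|x|}$. This is where the argument fails. The number of $x$ you sum over is $2^{a_1}$, and since $a_1=\frac{1}{p}(1+(1+\gamma)\epsilon)\log_\rho n=2(1+(1+\gamma)\epsilon)\log_2 n$, this is $2^{a_1}=n^{2+O(\epsilon)}$, \emph{not} $n^{O(L\epsilon)}$ as you write (there is no $L$ in $a_1$; you may be confusing it with $a_2$). Meanwhile your squared slack gives only $\bigl((1-p)^{a_1-|x|}\bigr)^2\le n^{-2(1+\epsilon)}$. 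Putting these together with the $n^2$ for pairs yields
\[
n^{2}\cdot 2^{a_1}\cdot n^{-2(1+\epsilon)} \;=\; n^{2}\cdot n^{2+2(1+\gamma)\epsilon}\cdot n^{-2-2\epsilon}\;=\;n^{2+2\gamma\epsilon},
\]
which is certainly not $o(1)$. The loss comes precisely from discarding $p^{|x|}$: at $p=\tfrac12$ that factor is worth another $n^{-(1+O(\epsilon))}$, and keeping it (so that each term is exactly $(1/2)^{a_1}$) would recover the correct bound --- but then you may as well skip the sum over $x$ entirely and argue as the paper does. Note also that the paper explicitly remarks that this injectivity claim need \emph{not} hold for $p>\tfrac12$, so your attempt to frame the argument for general $p\ge\tfrac12$ is aiming at something false.
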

    \begin{proof}
        Fix $u \in B$. If $u\in V(\Gamma)$, then for every vertex $v \in V(\Gamma)$, we have $\phi(u) = \phi(v)$ if and only if $N_{G}(v| A_1) = N_{G}(u| A_1)$. For every vertex $v \in B$,
        \begin{align*}
            \mathbb{P}\left(N_{G}(v| A_1) = N_{G}(u| A_1)\right) &= p^{d_{G}(u| A_1)}(1-p)^{a_1 - d_{G}(u| A_1)}=\left(\frac{1}{2}\right)^{a_1} \\
            &= \left(\frac{1}{2}\right)^{\frac{1}{p}\left(1 + (1+\gamma)\epsilon\right) \log_{2} n} =o(1/n^2).
        \end{align*}
        Hence, by the union bound, \textbf{whp} there are no two vertices $u \neq v\in V(\Gamma)$ such that $\phi(u) = \phi(v)$.
    \end{proof}

    Set $\widetilde{G}_W = G_W\left[\phi\left(V(\Gamma)\right)\right]$. Denote by $\widetilde{G}_W(p')$ the random subgraph of $\widetilde{G}_W$ obtained by retaining every edge of $\widetilde{G}_W$ independently with probability $p'$. By Claim \ref{claim:phi-1-to-1}, \textbf{whp} $\phi$ is injective. Therefore, $\widetilde{G}_W \cong \Gamma$. Recall that we want to find a matching in $\Gamma_i$. We will show that \textbf{whp} $\alpha(\Gamma_i) \le \frac{n}{\log_{\rho}n}$ and later we will construct the desired matching. Since $\widetilde{G}_W \cong \Gamma$, it suffices to prove the following lemma.    
    
    \begin{lemma}\label{claim:bound-on-alpha-G_W}
        \textbf{Whp}
        \[
            \alpha(\widetilde{G}_W(p')) \le \frac{n}{\log_{\rho}n}.
        \]
    \end{lemma}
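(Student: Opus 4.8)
\textbf{Proof proposal for Lemma \ref{claim:bound-on-alpha-G_W}.}

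The plan is to bound $\alpha(\widetilde G_W(p'))$ by bounding $\alpha(G_W(p'))$, since $\widetilde G_W$ is an induced subgraph of $G_W$, so any independent set in $\widetilde G_W(p')$ gives one in $G_W(p')$ of the same size. I would fix a target $t = \frac{n}{\log_\rho n}$ and bound the expected number of independent sets of size $t$ in $G_W(p')$. The relevant quantities are: the total number of vertices $|W| = \binom{a_1}{\le (1+(1+2\gamma)\epsilon)\log_\rho n}$, which is of order $n^{1+O(\epsilon)}$ up to polylog factors (here $a_1 = \frac1p(1+(1+\gamma)\epsilon)\log_\rho n$); the number $N$ of sets $y \in W$ that are \emph{not} joined to a fixed $x \in W$ by an edge of $G_W$, i.e. with $|x\cap y| < (1+(1-6\gamma)\epsilon)\log_\rho n$; and the edge-retention probability $p'$, a constant. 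The first moment bound gives
\[
    \mathbb E\big[\#\{\text{ind. sets of size } t\}\big] \;\le\; \binom{|W|}{t}\,(1-p')^{\binom{t}{2}} \;+\; (\text{contribution of non-edges of }G_W),
\]
and the real content is controlling how many pairs in a $t$-set can fail to be $G_W$-edges, i.e. the ``covering-balls'' estimate on $N$.

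The main step — and I expect it to be the main obstacle — is the Hamming-ball covering argument (the claim referenced as Claim \ref{claim:hamming-balls}): one must show that $W$ cannot be covered by too few ``balls'' $\{y : |x\cap y| \ge (1+(1-6\gamma)\epsilon)\log_\rho n\}$, equivalently that each such ball is small relative to $|W|$. Concretely, for a fixed $x$ with $|x| \approx (1+c)\log_\rho n$ (where $c\in[\epsilon,(1+2\gamma)\epsilon]$), the number of $y\in W$ with $|x\cap y|\ge (1+(1-6\gamma)\epsilon)\log_\rho n$ is at most (summing over the intersection size $j$ and the part of $y$ outside $x$) roughly $\binom{|x|}{j}\binom{a_1}{|y|-j}$, and because $j$ is forced to be close to the full size of $y$ — larger than $|y|$ minus $\Theta(\gamma\epsilon)\log_\rho n$ — the factor $\binom{a_1}{|y|-j}$ only contributes $n^{\Theta(\gamma\epsilon)}$. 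Since $p = 1/2$ makes $a_1 = 2(1+(1+\gamma)\epsilon)\log_\rho n$ and $|W| \approx 2^{a_1(1+o(1))} = n^{2(1+(1+\gamma)\epsilon)(1+o(1))}$ by a binary-entropy computation, while a ball has size at most $n^{1+O(\gamma\epsilon)}\cdot\mathrm{polylog}$, each ball is a $n^{-1+O(\gamma\epsilon)}$ fraction of $W$ — so the complement of a vertex's neighbourhood in $G_W$ is tiny, and $G_W$ is ``almost complete''. I would spell this out as: for every $x$, the number of non-neighbours $N(x)$ satisfies $N(x) \le |W| \cdot n^{-1 + O(\gamma\epsilon)}$.

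Given this density estimate, I would finish as follows. Any independent set $S$ of size $t$ in $G_W(p')$ must either (a) be an independent set in $G_W$ itself, or (b) contain $\ge \binom{t}{2}/2$ pairs that are $G_W$-edges but were deleted in the $p'$-sampling, since otherwise more than half the pairs are non-edges of $G_W$, forcing some vertex to have $\ge t/2$ non-neighbours in $S$, contradicting $N(x) \le |W|n^{-1+O(\gamma\epsilon)}$ once $t \gg |W| n^{-1+O(\gamma\epsilon)}$ (which holds since $t = n/\log_\rho n$ and $|W| n^{-1+O(\gamma\epsilon)} = n^{1+O(\epsilon)+O(\gamma\epsilon)}/\mathrm{polylog}$ — wait, this needs $\epsilon$ small enough that $1 + O(\epsilon) + O(\gamma\epsilon) $ compares correctly; more carefully, I'd use that a ball is an $n^{-1+O(\gamma\epsilon)}$ fraction and that any large $S$ inside a union of few balls is itself small, handled by a separate covering count). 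For case (b), $\mathbb E[\#\text{such } S] \le \binom{|W|}{t}(1-p')^{\binom t 2/2}$, and since $\binom{|W|}{t} \le (e|W|/t)^t = \exp(O(t\log n))$ while $(1-p')^{\binom t 2 /2} = \exp(-\Omega(t^2))$ and $t = n/\log_\rho n \gg \log n$, this expectation is $o(1)$; Markov then gives the claim. For case (a), a union bound over the $\le \mathrm{polylog}(n)$ choices of the size profile reduces to counting independent sets in the ``near-complete'' graph $G_W$, which forces $S$ to lie in a small union of balls around any one of its vertices, hence $|S| \le N(x)+1 \le |W|n^{-1+O(\gamma\epsilon)} \ll t$ for $\epsilon,\gamma$ small — a contradiction. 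I would organize the ball-size computation as the standalone Claim \ref{claim:hamming-balls} and invoke it twice, once for (a) and once for (b); the delicate point throughout is tracking the $O(\gamma\epsilon)$ slacks so that the exponent of $n$ in a ball's relative size stays strictly below $0$, which is exactly where the choice $(1-6\gamma)\epsilon$ in the definition of $\Gamma$ and $G_W$ (versus $1+\epsilon$ and $1+(1+2\gamma)\epsilon$ elsewhere) is used.
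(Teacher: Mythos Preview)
Your approach has a fatal error at the very first step. You propose to bound $\alpha(\widetilde G_W(p'))$ by bounding $\alpha(G_W(p'))$, arguing that $G_W$ is ``almost complete''. It is not: $G_W$ is extremely sparse. For $p=1/2$ we have $a_1 = 2(1+(1+\gamma)\epsilon)\log_\rho n$ and vertices of $W$ are subsets of $A_1$ of size $\approx a_1/2$; two \emph{uniform} such subsets intersect in about $a_1/4 \approx \tfrac12(1+\epsilon)\log_\rho n$, far below the adjacency threshold $(1+(1-6\gamma)\epsilon)\log_\rho n$. Equivalently, the ball $\{y:|x\cap y|\ge(1+(1-6\gamma)\epsilon)\log_\rho n\}$ you compute \emph{is} (up to the $\gamma$--slack) the neighbourhood of $x$, not its complement. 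Your own estimate that each ball is a tiny fraction of $W$ therefore says each vertex has few neighbours, not few non-neighbours; in particular $\alpha(G_W)\ge |W|/(\text{max ball size})$ is of order $n^{2-o(1)}$, and no bound on $\alpha(G_W(p'))$ of size $n/\log_\rho n$ is possible.

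The point you are missing is that $\widetilde G_W$ is not an arbitrary induced subgraph of $G_W$: its vertex set $\phi(V(\Gamma))=\{N_G(v\mid A_1):v\in B_1\}$ is a \emph{random} subset of $W$, and the proof must exploit this randomness. The paper's argument does so as follows. Fix a small set $B'\subset V(\Gamma)$ of size $m'=n/\log^3 n$. Using Lemma~\ref{l: many neighbours} (this is precisely where $p\ge 1/2$ enters), one shows that \textbf{whp} for every $x\in W$ some $y_i\in B'$ has $\phi(y_i)$ lying in the tighter ball $B(x)=\{y:|x\cap y|\ge(1+(1-\gamma)\epsilon)\log_\rho n\}$; this is Claim~\ref{claim:hamming-balls}. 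A short triangle-inequality computation then shows that any two $x,x'$ with $\phi(y_i)\in B(x)\cap B(x')$ satisfy $|x\cap x'|\ge(1+(1-6\gamma)\epsilon)\log_\rho n$, i.e.\ are adjacent in $G_W$. Hence any set $J\subset V(\widetilde G_W)$ of size $m=n/\log_\rho n$ is covered by $m'$ cliques $Y_1,\dots,Y_{m'}$, and convexity gives $|E(\widetilde G_W[J])|\ge \sum_i\binom{|Y_i'|}{2}\ge m'\binom{m/m'}{2}=\Omega(n\log n)$. The union bound over $\binom{|W|}{m}=\exp(O(n))$ choices of $J$ against $(1-p')^{\Omega(n\log n)}$ then finishes. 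So the ``covering-balls'' idea is not about ball \emph{sizes} in $W$, but about the fact that $m'$ \emph{random} centres suffice to cover all of $W$---a probabilistic statement about $G(n,p)$ that your purely combinatorial approach cannot reproduce.
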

    Before proving this lemma, let us outline how we shall use the notion of ball-covering in the proof. Let us recall that $x,y$ are not connected in $\widetilde{G}_W$ if $|x\cap y|< \left(1+(1-6\gamma)\epsilon\right)\log_{\rho}n$. Consider a Hamming ball around $x\in W$, containing all $y\in W$ such that $|x\cap y|$ is sufficiently large. If we can find $m$ vertices, such that the Hamming balls around them cover all the vertices of $W$ (and the respective edges are retained in $\widetilde{G}_W(p')$), then the independence number of the graph is at most $m$ --- indeed, any set of more than $m$ vertices must have two vertices in the same Hamming ball, and thus there must be an edge between them. Let us proceed with the detailed proof.
    \begin{proof}
        For every $x \in W$, denote by $B(x)$ the following Hamming ball around $x$:
        \[
            B(x) \coloneqq \{y \in W \colon |x \cap y| \ge (1 + (1-\gamma)\epsilon)\log_{\rho} n\}.
        \]
        Set $m \coloneqq \frac{n}{\log(n)}$ and $m' \coloneqq \frac{n}{\log^3 n}$. Let $B' = \{y_1, \dots, y_{m'} \} \subseteq V(\Gamma)$ be an arbitrary subset of size $m'$.
        \begin{claim}\label{claim:hamming-balls}
            \textbf{Whp}, for every $x \in W$, there exists a vertex $v \in B'$ such that 
            \[
                d_{G}(v| x) \ge (1 + (1-\gamma)\epsilon)\log_{\rho} n.
            \]
        \end{claim}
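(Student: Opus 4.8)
The plan is to fix an arbitrary $x \in W$ and show that the probability that \emph{no} vertex of the fixed set $B'$ lands in the Hamming ball $B(x)$ (in the sense $d_G(v|x) \ge (1+(1-\gamma)\epsilon)\log_\rho n$) is so small that a union bound over all $|W| \le 2^{a_1} = n^{O(1)}$ choices of $x$ still goes to zero. First I would observe that for a fixed $x \in W$ and a fixed $v \in B'$, the count $d_G(v|x)$ is distributed as $\mathrm{Bin}(|x|, p)$, and since $|x| \ge (1+\epsilon)\log_\rho n$ and we are asking for the value to reach $(1+(1-\gamma)\epsilon)\log_\rho n$, this is asking for a count close to (indeed slightly below) the high end of the range $|x|$; I would estimate the lower tail $\mathbb{P}(d_G(v|x) \ge (1+(1-\gamma)\epsilon)\log_\rho n)$ from below by the single-term bound, exactly as in Lemma~\ref{l: many neighbours}: bound $\binom{|x|}{(1+(1-\gamma)\epsilon)\log_\rho n} p^{(1+(1-\gamma)\epsilon)\log_\rho n}(1-p)^{|x| - (1+(1-\gamma)\epsilon)\log_\rho n}$ from below. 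Because $|x| - (1+(1-\gamma)\epsilon)\log_\rho n$ is at most $O(\gamma\epsilon)\log_\rho n$ (using $|x| \in I$), the factor $(1-p)^{\cdots}$ costs only $n^{-O(\gamma\epsilon)}$, and the $p$-power costs a bounded power of $n$ (here $p = 1/2$ so it is exactly $n^{-(1+(1-\gamma)\epsilon)}$), so the whole probability is at least $n^{-1 - c\epsilon}$ for a small constant $c$ depending on $\gamma$; crucially $c\epsilon$ can be made smaller than $1$, so this is $\ge n^{-1-o(1)}$.

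Next, since the edges of $G$ incident to different vertices $v \in B'$ (restricted to $A_1$) are independent, the events $\{d_G(v|x) \ge (1+(1-\gamma)\epsilon)\log_\rho n\}$ are independent over $v \in B'$, so
\[
    \mathbb{P}\big(\forall v \in B' \ d_G(v|x) < (1+(1-\gamma)\epsilon)\log_\rho n\big) \le \big(1 - n^{-1-c\epsilon}\big)^{m'} \le \exp\big(-m' n^{-1-c\epsilon}\big) = \exp\big(-n^{-c\epsilon}/\log^3 n\big),
\]
which is unfortunately only $1 - o(1)$, not small enough for a union bound over $n^{O(1)}$ sets $x$. To fix this I would choose the parameters so that $m'$ is genuinely larger relative to the tail probability: note $a_1 = \frac{1}{p}(1+(1+\gamma)\epsilon)\log_\rho n = 2(1+(1+\gamma)\epsilon)\log_\rho n$ when $p=1/2$, so $|W| \le \binom{a_1}{\le a_1} \le 2^{a_1} = n^{2(1+(1+\gamma)\epsilon)}$, a polynomial of bounded degree. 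Meanwhile with $m' = n/\log^3 n$ the exponent $m' n^{-1-c\epsilon} = n^{-c\epsilon}\log^{-3} n \to 0$; so I actually need the single-vertex success probability to be at least $n^{-1+\eta}$ for some fixed $\eta > 0$, which is exactly what Lemma~\ref{l: many neighbours} already gives (there it is $n^{-1+2\epsilon}$ with $\gamma$ absorbed). So the cleanest route is: \emph{apply Lemma~\ref{l: many neighbours} directly}, with its role of $Y$ played by $B'$ (which has size $\ge n/\log^3 n$) and its role of $X$ played by $x$ — except that $x$ has size in $I$ rather than exactly $(1+\epsilon)\log_\rho n$, so I would first restrict $x$ to a subset $x_0 \subseteq x$ of size exactly $(1+\epsilon)\log_\rho n$; then $d_G(v|x) \ge d_G(v|x_0)$, and Lemma~\ref{l: many neighbours} (with its $\gamma$ chosen so that $(1+(1-\gamma_{\mathrm{lem}})\epsilon) \ge (1+(1-\gamma)\epsilon)$, i.e. $\gamma_{\mathrm{lem}} \le \gamma$) gives $\mathbb{P}(\forall v \in B'\ d_G(v|x_0) < (1+(1-\gamma)\epsilon)\log_\rho n) \le \exp(-n^{\gamma_{\mathrm{lem}}\epsilon})$ — actually $\le \exp(-n^{\epsilon'})$ for the relevant $\epsilon'$.

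Then I would finish by a union bound: the probability that there \emph{exists} $x \in W$ with no good $v \in B'$ is at most $|W| \cdot \exp(-n^{\epsilon'}) \le n^{2(1+(1+\gamma)\epsilon)} \exp(-n^{\epsilon'}) = o(1)$, since a polynomial is crushed by $\exp(-n^{\epsilon'})$. The main obstacle, as sketched above, is the exact bookkeeping of the single-vertex lower tail: one must verify that asking for $d_G(v|x_0)$ to reach $(1+(1-\gamma)\epsilon)\log_\rho n$ when $|x_0| = (1+\epsilon)\log_\rho n$ costs only a polynomial factor below $n^{-1}$ — i.e. that the ``overshoot gap'' $\gamma\epsilon\log_\rho n$ is affordable — and this is precisely the content of Lemma~\ref{l: many neighbours}, which was proved for $p \ge 1/2$ (hence $\log_{1-p}p \le 1$); in the $p = 1/2$ case treated here the constants are cleanest. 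So modulo matching up the constants $\gamma$, this claim is essentially an immediate corollary of Lemma~\ref{l: many neighbours} applied with $Y = B'$ and $X = x_0$, together with the observation that $|W|$ is only polynomially large in $n$.
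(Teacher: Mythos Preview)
Your proposal is correct and follows essentially the same route as the paper's own proof: apply Lemma~\ref{l: many neighbours} with $Y = B'$ and $X$ a subset of $x$, then union bound over the polynomially many $x \in W$. Your observation that one should pass to a subset $x_0 \subseteq x$ of size exactly $(1+\epsilon)\log_{\rho} n$ (so that the hypothesis of Lemma~\ref{l: many neighbours} is met verbatim, using $d_G(v|x) \ge d_G(v|x_0)$) is a detail the paper leaves implicit; otherwise the arguments coincide.
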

        \begin{proof}
            For every $x \in W$, set
            \[
                B'(x) = \{ v \in B' \colon d_{G}(v| x) \ge (1 + (1-\gamma)\epsilon)\log_{\rho} n \}.
            \]
                
            Fix $x \in W$. Since $|B'| \ge \frac{n}{\log^3 n}$, by Lemma \ref{l: many neighbours} the probability that $B'(x) = \varnothing$ is at most $\exp\left(-n^{0.5\epsilon}\right)$. Note that $|W| = n^{O(1)}$. Thus, by the union bound over $W$, the probability that there exists a vertex $x \in W$ such that $B'(x) = \varnothing$ is at most
            \[
                n^{O(1)} \cdot \exp\left(-n^{0.5\epsilon}\right) = o(1).\qedhere
            \]
        \end{proof}
       Let $\mathcal{A}$ be the event that for every $x \in W$, there exists a vertex $v \in B'$ such that $d_{G}(v| x) \ge (1 + (1-\gamma)\epsilon)\log_{\rho} n$. By Claim \ref{claim:hamming-balls}, we have that $\mathbb{P}(\mathcal{A})=1-o(1)$. In the following claim, we assume that $\mathcal{A}$ holds deterministically.
        \begin{claim}\label{c: number of edges}
            For every $J = \{x_1, \dots, x_m\} \subset V(\widetilde{G}_W)$, we have  $|E(\widetilde{G}_W[J])|\ge 0.25 n \log_{\rho}n$.
        \end{claim}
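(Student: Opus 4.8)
The plan is to prove the claim deterministically on the event $\mathcal{A}$, which is all that is required since $\mathbb{P}(\mathcal{A}) = 1-o(1)$ by Claim~\ref{claim:hamming-balls}. The first step is to read $\mathcal{A}$ as a covering statement. For $v \in B' \subseteq V(\Gamma)$ and $x \in W$ (so $x \subseteq A_1$), one has $d_{G}(v\mid x) = |N_{G}(v)\cap x| = |N_{G}(v\mid A_1)\cap x| = |\phi(v)\cap x|$; hence the inequality $d_{G}(v\mid x) \ge (1+(1-\gamma)\epsilon)\log_{\rho}n$ appearing in $\mathcal{A}$ says exactly that $x \in B(\phi(v))$. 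Consequently, on $\mathcal{A}$ the at most $|B'| = m'$ Hamming balls $B(\phi(v))$, $v \in B'$, cover all of $W$, and in particular cover $J$.

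The second step is to show that for each $v \in B'$ the set $B(\phi(v)) \cap J$ induces a clique in $\widetilde{G}_W[J]$. If $x,y \in B(\phi(v))$, then intersecting inside $\phi(v)$ and using that $\phi(v) \in W$ has size at most $(1+(1+2\gamma)\epsilon)\log_{\rho}n$,
\[
 |x\cap y| \ge |x\cap\phi(v)| + |y\cap\phi(v)| - |\phi(v)| \ge \bigl(2(1+(1-\gamma)\epsilon) - (1+(1+2\gamma)\epsilon)\bigr)\log_{\rho}n = (1+(1-4\gamma)\epsilon)\log_{\rho}n .
\]
Since $\gamma > 0$ this exceeds $(1+(1-6\gamma)\epsilon)\log_{\rho}n$, so $\{x,y\} \in E(G_W)$ by \eqref{align:G_W-edges-def}; and because $x,y \in J \subseteq V(\widetilde{G}_W) = \phi(V(\Gamma))$, this edge lies in $\widetilde{G}_W[J]$.

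The final step is a convexity count. Using the cover from the first step, assign to each $x \in J$ some $v(x) \in B'$ with $x \in B(\phi(v(x)))$; this splits $J$ into at most $m'$ classes $J_v$, each of which, by the second step, induces a clique in $\widetilde{G}_W[J]$. Counting only the within-class edges, which are disjoint across classes,
\[
 |E(\widetilde{G}_W[J])| \ge \sum_{v \in B'} \binom{|J_v|}{2}, \qquad \sum_{v \in B'}|J_v| = |J| = m ,
\]
with at most $m'$ nonempty classes. Since $t \mapsto \binom{t}{2}$ is convex, the right-hand side is minimised when the classes are as balanced as possible, i.e.\ of size $\approx m/m' = \log^2 n$, whence (ignoring divisibility)
\[
 |E(\widetilde{G}_W[J])| \ge m' \binom{m/m'}{2} = \frac{m}{2}\left(\frac{m}{m'} - 1\right) = (1-o(1))\frac{n\log n}{2} \ge 0.25\, n\log_{\rho}n
\]
for $n$ large enough (recall $p = 1/2$, so $\rho = 2$).

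I expect the only delicate point to be the threshold bookkeeping in the second step: one needs $\gamma$ small enough, as already arranged at the start of the proof, so that $2(1-\gamma) - (1+2\gamma) > 1 - 6\gamma$, i.e.\ the gap between the ball radius $(1+(1-\gamma)\epsilon)\log_\rho n$ and the upper bound on the size of sets in $W$ still leaves every pairwise intersection above the edge threshold of $\widetilde{G}_W$. The remaining conceptual observation is that the \emph{polynomial} gap between $m = n/\log n$ and $m' = n/\log^3 n$ forces the balanced classes to have poly-logarithmic size, which is exactly what upgrades the trivial poly-logarithmic clique bound into the required $\Theta(n\log n)$; everything else is routine.
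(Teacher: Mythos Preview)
Your proof is correct and essentially identical to the paper's: both interpret $\mathcal{A}$ as covering $J$ by at most $m'$ Hamming balls $B(\phi(y_i))$ for $y_i\in B'$, show each such ball induces a clique in $\widetilde{G}_W[J]$ via the same inclusion--exclusion inside $\phi(y_i)$, and conclude by Jensen applied to the resulting clique partition. The only differences are cosmetic (the paper phrases the cover as $N_G(y_i\mid A_1)\in B(x)$ rather than your equivalent $x\in B(\phi(y_i))$, obtains the threshold $1-6\gamma$ where you get the slightly sharper $1-4\gamma$, and bounds $\binom{m/m'}{2}\ge (m/(2m'))^2$ rather than carrying the $(1-o(1))$ factor).
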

        \begin{proof}
            For every $i \in [m']$, set
            \[
                Y_i \coloneqq \{x \in J \colon N_{G}(y_i| A_1) \in B(x)\}.
            \]
            Note that by $\mathcal{A}$, we have that $\bigcup_{i=1}^{m'}Y_i=J$. Indeed, for every $x \in J$, there exists a vertex $y \in B'$ such that $N_{G}(y| A_1) \in B(x)$ and thus there exists $i \in [m']$ such that $x \in Y_i$.
            
            We now show that $\widetilde{G}_W[Y_i]$ is a clique for every $i \in [m']$. Fix $i \in [m']$ and two different vertices $x, x' \in Y_i$. By the definition of $Y_i$,
                \[
                    |x \cap N_{G}(y_i| A_1)| = |N_{G}(y_i| x)| \ge (1 + (1-\gamma)\epsilon)\log_{\rho} n
                \]
                and
                \[
                    |x' \cap N_{G}(y_i| A_1)| = |N_{G}(y_i| x')| \ge (1 + (1-\gamma)\epsilon)\log_{\rho} n.
                \]
                Recall that $y_i \in B_1$, so $y_i$ is $A_1$-good and thus $d_{G}(y_i| A_1) \le (1 + (1+2\gamma)\epsilon)\log_{\rho} n$. Then,
                \[
                    |N_{G}(y_i| A_1) \setminus x| \le (3\gamma \epsilon) \log_{\rho} n \quad \text{and} \quad |N_{G}(y_i| A_1) \setminus x'| \le (3\gamma \epsilon) \log_{\rho} n.
                \]
                Hence,
                \begin{align*}
                    |x \cap x'| &\ge |x \cap x' \cap N_{G}(y_i| A_1)| \\&\ge |N_{G}(y_i| A_1)| - |N_{G}(y_i| A_1) \setminus x| - |N_{G}(y_i| A_1) \setminus x'| \\& \ge (1 + \epsilon - 6 \gamma \epsilon)\log_{\rho} n,
                \end{align*}                
                where the last inequality is true since $d_{G}(y_i| A_1) \ge (1 + \epsilon) \log_{\rho} n$ because $y_i$ is $A_1$-good. Therefore, by \eqref{align:G_W-edges-def}, $\{x, x'\} \in E(\widetilde{G}_W)$ implying that $\widetilde{G}_W[Y_i]$ is indeed a clique.

            For every $i \in [m']$, set
            \begin{align*}
                Y_i' &\coloneqq Y_i \setminus \left(\bigcup_{j=1}^{i - 1} Y_j\right).
            \end{align*}

            Since for every $i \in [m']$ we have that $\widetilde{G}_W[Y_i]$ is a clique, $\widetilde{G}_W[Y_i'] \subseteq \widetilde{G}_W[Y_i]$ is also a clique. Thus,
            \begin{align*}
                |E(\widetilde{G}_W[J])| &\ge \sum_{i=1}^{m'} \binom{|Y_i'|}{2}
                \ge m' \cdot \binom{\frac{m}{m'}}{2} \ge m' \cdot \left(\frac{m}{2m'}\right)^2 \\
                &= \frac{m^2}{4m'} = \frac{1}{4} n \log_{\rho}n,
            \end{align*}
            where the second inequality is true by Jensen's inequality and the fact that $m = |J| = \sum_{i=1}^{m'} |Y_i'|$. 
        \end{proof}
        By Claim \ref{c: number of edges}, 
            \[
                \mathbb{P}_{p'}(|E(\widetilde{G}_W(p')[J])| = 0 \mid \mathcal{A}) = (1-p')^{|E(\widetilde{G}_W[J])|} \le (1-p')^{0.25 n \log_{\rho}n}. 
            \]
        We have, 
        \begin{align*}
            \binom{|W|}{m} \le |W|^m = \exp\left(\Theta(m \log n)\right) = \exp\left(\Theta(n)\right).
        \end{align*}
        Therefore, by the union bound, the probability that there is an independent set in $\widetilde{G}_W(p')$ of size $m$ is at most
        \[
            \mathbb{P}\left(\neg A\right)+\binom{|W|}{m} (1-p')^{0.25 n \log_{\rho}n} = o(1). \qedhere
        \]
        
    \end{proof}
    Recall that our goal is to find a matching $M_i \subseteq \Gamma_i \setminus (\cup_{j=1}^{i-1} M_j)$ such that $\cup_{j=1}^{i} M_j$ is $C_4$-free.
    By Claim \ref{claim:bound-on-alpha-G_W} and the fact that $\widetilde{G}_W \cong \Gamma$, \textbf{whp}
    \[
        \alpha(\Gamma_i) \le \frac{n}{\log_{\rho}n}.
    \]
    Let $M_i \subseteq \Gamma_i \setminus (\cup_{j=1}^{i-1} M_j)$ be a matching of the maximum size such that $\bigcup_{j=1}^{i} M_j$ is $C_4$-free.

    Let $U \subseteq V(\Gamma_i)$ be the set of unmatched vertices. The next claim bounds the maximum degree $\Delta(\Gamma_i[U])$.
    \begin{claim}\label{claim:I-maximal-degree}
        $\Delta(\Gamma_i[U]) \le {s_2}^3$.
    \end{claim}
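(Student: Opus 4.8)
The plan is to deduce the bound directly from the maximality of $M_i$: a vertex $v \in U$ with too many neighbours in $\Gamma_i[U]$ would admit a neighbour $u \in U$ for which $M_i \cup \{u,v\}$ is still a matching and $\bigcup_{j=1}^{i} M_j \cup \{u,v\}$ is still $C_4$-free, contradicting the choice of $M_i$. First I would record the trivial degree bound in $M \coloneqq \bigcup_{j=1}^{i} M_j$: since each $M_j$ is a matching, every vertex has degree at most $i \le s_2-1$ in $M$, and any vertex that is unmatched in $M_i$ (in particular every $v \in U$) has degree at most $i-1 \le s_2-2$ in $M$.

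Next, fix $v \in U$ and a neighbour $u$ of $v$ in $\Gamma_i[U]$, and split into two cases. If $\{u,v\} \in \bigcup_{j<i} M_j$, then $u$ is one of the at most $i-1 \le s_2-2$ neighbours of $v$ in $M$. Otherwise $M_i \cup \{u,v\}$ is a matching that is still edge-disjoint from $M_1, \dots, M_{i-1}$, so by maximality of $M_i$ the graph $M \cup \{u,v\}$ must contain a copy of $C_4$; since $M$ itself is $C_4$-free, this copy uses the edge $\{u,v\}$, and hence there is a path $v\,y\,x\,u$ with $\{v,y\}, \{y,x\}, \{x,u\} \in E(M)$. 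I would then count such paths emanating from $v$: at most $i-1 \le s_2-2$ choices of $y$ (an $M$-neighbour of $v$), then at most $i \le s_2-1$ choices of $x$ (an $M$-neighbour of $y$), then at most $i \le s_2-1$ choices of $u$ (an $M$-neighbour of $x$). Adding the two contributions, the number of neighbours of $v$ in $\Gamma_i[U]$ is at most $(s_2-2) + (s_2-2)(s_2-1)^2 \le s_2^3$ (here $s_2 \ge 3$ since $i$ ranges over $\{2,\dots,s_2-1\}$), which is exactly the claim since a vertex outside $U$ contributes nothing to $\Gamma_i[U]$.

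I do not expect a genuine obstacle: this is a short local extremal counting argument. The only point needing a little care is that the random subgraphs $\Gamma_1, \dots, \Gamma_{s_2-1}$ of $\Gamma$ need not be edge-disjoint, so an edge of $\Gamma_i[U]$ could already lie in some $M_j$ with $j < i$; this is precisely why the first case above is required, and it contributes only the lower-order term $s_2-2$.
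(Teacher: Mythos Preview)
Your proof is correct and follows essentially the same approach as the paper: both argue by contradiction from the maximality of $M_i$, bounding the number of ``bad'' neighbours of a high-degree vertex $v\in U$ by counting length-$3$ paths from $v$ in $M=\bigcup_{j\le i}M_j$. You add one small refinement the paper glosses over---separately accounting for the at most $s_2-2$ neighbours $u$ with $\{u,v\}$ already in some earlier $M_j$ (so that $\{u,v\}$ cannot be added to $M_i\subseteq\Gamma_i\setminus\bigcup_{j<i}M_j$)---but the paper's loose bound $s_2^3$ absorbs this term anyway.
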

    \begin{proof}            
        Suppose towards contradiction that $\Delta(\Gamma_i[U]) > {s_2}^3$ and take a vertex $v \in U$ such that $d_{\Gamma_i[U]}(v) > {s_2}^3$.

        Note that there are at most ${s_2}^3$ many paths with three edges in $\bigcup_{j=1}^{i} M_j$ which start with the vertex $v$. Thus, there exists a vertex $u \in N_{\Gamma_i[U]}(v)$ such that $u$ is not adjacent to another endpoint of the above paths and thus $\{u,v\}$ does not close a copy of $C_4$ in $\bigcup_{j=1}^{i} M_j$. Hence, we can add $\{u,v\}$ to the matching $M_i$, a contradiction to the maximality of $M_i$.
    \end{proof}
    
    We finish with the following claim.
    \begin{claim}\label{c: small degree}
        \textbf{Whp} $|U| \le (s_2^3 + 1) \frac{n}{\log_{\rho}n}$.
    \end{claim}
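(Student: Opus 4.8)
The plan is to combine the two facts already established about the graph $\Gamma_i[U]$: namely that it has bounded maximum degree (Claim \ref{claim:I-maximal-degree}) and that its independence number is controlled (Lemma \ref{claim:bound-on-alpha-G_W}). First I would recall that, since $\widetilde{G}_W \cong \Gamma$ \textbf{whp} and $\Gamma_i$ is exactly the random subgraph of $\Gamma$ obtained by retaining each edge with probability $p'$ (i.e. it has the distribution of $\widetilde{G}_W(p')$ after conditioning on the isomorphism), Lemma \ref{claim:bound-on-alpha-G_W} gives that \textbf{whp} $\alpha(\Gamma_i) \le \frac{n}{\log_{\rho}n}$. We work on this event.

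Next I would invoke Claim \ref{claim:I-maximal-degree}, which says $\Delta(\Gamma_i[U]) \le s_2^3$. A standard greedy argument shows that any graph $H$ with $\Delta(H) \le d$ satisfies $\alpha(H) \ge \frac{|V(H)|}{d+1}$: repeatedly pick a vertex into the independent set and delete it together with its (at most $d$) neighbours, so the process runs for at least $\frac{|V(H)|}{d+1}$ steps. Applying this with $H = \Gamma_i[U]$ and $d = s_2^3$ yields
\[
    \alpha(\Gamma_i[U]) \ge \frac{|U|}{s_2^3 + 1}.
\]

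Finally, any independent set in the induced subgraph $\Gamma_i[U]$ is also an independent set in $\Gamma_i$, so $\alpha(\Gamma_i[U]) \le \alpha(\Gamma_i) \le \frac{n}{\log_{\rho}n}$ \textbf{whp}. Chaining the two inequalities gives $\frac{|U|}{s_2^3+1} \le \frac{n}{\log_{\rho}n}$, i.e. $|U| \le (s_2^3+1)\frac{n}{\log_{\rho}n}$, as claimed. I do not expect a genuine obstacle here: all the substantive work — the covering-balls bound on $\alpha(\widetilde{G}_W(p'))$ and the degree bound coming from maximality of the $C_4$-free matching — has already been done, and this step is just the short deterministic deduction that packages them together. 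The only point requiring a line of care is making explicit that the distributional identification $\Gamma_i \overset{d}{=} \widetilde{G}_W(p')$ lets us transfer the \textbf{whp} conclusion of Lemma \ref{claim:bound-on-alpha-G_W}.
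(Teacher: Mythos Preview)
Your proposal is correct and matches the paper's own proof essentially line for line: the paper also invokes Claim~\ref{claim:I-maximal-degree} to get $\Delta(\Gamma_i[U])\le s_2^3$, uses the elementary bound $\alpha(H)\ge |V(H)|/(\Delta(H)+1)$, and then appeals to Lemma~\ref{claim:bound-on-alpha-G_W} (via the identification $\Gamma_i\overset{d}{=}\widetilde{G}_W(p')$) to derive a contradiction from $|U|>(s_2^3+1)\frac{n}{\log_\rho n}$. The only cosmetic difference is that the paper phrases it as a proof by contradiction rather than a direct chain of inequalities.
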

    \begin{proof}
        Suppose towards contradiction that $|U| > (s_2^3 + 1) \frac{n}{\log_{\rho}n}$. By Claim \ref{claim:I-maximal-degree}, $\Delta(\Gamma_i[U]) \le {s_2}^3$. Hence,
        \[
            \alpha\left(\Gamma_i[U]\right) \ge \frac{|U|}{\Delta(\Gamma_i[U]) + 1} > \frac{n}{\log_{\rho}n},
        \]
        a contradiction to Lemma \ref{claim:bound-on-alpha-G_W}.
    \end{proof}  
    The desired subgraph in Lemma \ref{lemma:H_B_1-subgraph} is the subgraph $H_{B_1}$ with its edges from $\cup_{i=1}^{s_2 - 1} M_i$ --- indeed, note that by Claim \ref{c: small degree}, there are at most $O\left(\frac{n}{\log n}\right)$ unmatched vertices at each round, and thus at most $O\left(\frac{n}{\log n}\right)$ vertices of degree at most $s_2-1$.

\subsubsection[Proof of Lemma \ref{lemma:H_B_1-subgraph}, p>0.5]{Proof of Lemma \ref{lemma:H_B_1-subgraph}, $p>0.5$}\label{s: problematic section}
Let us now explain how to complete the proof for $p > 0.5$. Indeed, note that Claim \ref{claim:phi-1-to-1} is not necessarily true, and hence we require a more delicate treatment to overcome the lack of isomorphism. Recall that our main goal is to show that $\alpha(\Gamma_i) \le \frac{n}{\log_{\rho}n}$. We prove this with the following series of relatively short claims and lemmas, where the key idea is that one can consider equivalence classes under $\phi$, and show that \textbf{whp} either all such classes are of bounded order, or of polynomial order (Claim \ref{l: key p>0.5}).

For every $v, u \in V(\Gamma)$, we say that $v \sim u$ if and only if $\phi(v) = \phi(u)$. Let $C_1, \dots, C_\ell$ be the equivalence classes under this relation. With a slight abuse of notation, given $C_{j}=\{v_1,\ldots,v_k\}$ we write $\phi(C_j)\coloneqq \phi(v_1)$.

\begin{claim}\label{l: key p>0.5}
    \textbf{Whp} one of the following holds.
    \begin{enumerate}
        \item There exists a constant $C$ such that
        \[
            |C_j| \le C,\quad \forall j \in [\ell].
        \]

        \item There exists $\beta > 0$ such that
        \[
            |C_j| \ge n^\beta,\quad \forall j \in [\ell].
        \]
    \end{enumerate}
\end{claim}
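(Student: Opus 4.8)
The plan is to read off the sizes of the $\phi$-fibers directly. For $x\subseteq A_1$ with $|x|\in I$, let $N_x$ denote the number of vertices $v\in B$ with $N_{G}(v|A_1)=x$; every such $v$ is automatically $A_1$-good, so the multiset $\{|C_1|,\dots,|C_{\ell}|\}$ coincides with $\{N_x : x\subseteq A_1,\ |x|\in I,\ N_x\ge 1\}$. Since $A_1$ is a fixed vertex set and the events $\{N_{G}(v|A_1)=x\}$ are independent over $v\in B$, we have $N_x\sim\mathrm{Bin}(|B|,q_k)$ with $q_k:=p^k(1-p)^{a_1-k}$ when $|x|=k$. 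Set $\mu_k:=\mathbb{E}[N_x]=|B|q_k$. Using $\log_{\rho}(1-p)=-1$, $|B|=n-O(\log n)$ and $a_1=\tfrac1p(1+(1+\gamma)\epsilon)\log_{\rho}n$, a short computation gives $\mu_k=n^{g(k/\log_{\rho}n)+o(1)}$, where $g(c):=1+c(1+\log_{\rho}p)-\tfrac1p(1+(1+\gamma)\epsilon)$. Because $p>\tfrac12$, we have $1+\log_{\rho}p\in(0,1)$, so $g$ is increasing; and as $k$ ranges over $I$ the argument $c=k/\log_{\rho}n$ ranges over $[1+\epsilon,\,1+(1+2\gamma)\epsilon]$, so $g$ varies by at most $2\gamma\epsilon(1+\log_{\rho}p)<2\gamma\epsilon$ across all admissible $k$.

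The value of $g$ at the midpoint $c^{*}=1+(1+\gamma)\epsilon$ equals $\lambda:=1-(1+(1+\gamma)\epsilon)\theta$, where $\theta:=\tfrac1p-1-\log_{\rho}p>0$ (indeed $\tfrac1p>1>1+\log_{\rho}p$). I then fix $\gamma,\epsilon>0$ small enough, depending on $p$, so that $\lambda\ne 0$ --- in fact $|\lambda|\ge c_0$ for a constant $c_0=c_0(p,\gamma,\epsilon)>0$ --- and also $2\gamma\epsilon<c_0/2$. This is possible in each case: if $\theta>1$ then $\lambda<1-\theta<0$ irrespective of $\epsilon$; if $\theta=1$ then $\lambda=-(1+\gamma)\epsilon<0$ (and $2\gamma\epsilon\le(1+\gamma)\epsilon/2$ once $\gamma\le\tfrac13$); if $\theta<1$ then $\lambda\ge(1-\theta)/2>0$ as soon as $(1+\gamma)\epsilon\theta\le(1-\theta)/2$. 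Since $c^{*}$ lies in $[1+\epsilon,1+(1+2\gamma)\epsilon]$, the variation bound above shows that $g(c)$ has the same sign as $\lambda$ and $|g(c)|\ge c_0/2$ for every admissible $c$; hence either all $\mu_k\le n^{-c_0/2+o(1)}$, or all $\mu_k\ge n^{c_0/2+o(1)}$, with the relevant alternative determined (non-randomly) by $p,\gamma,\epsilon$.

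In the regime $\lambda<0$ I bound, for a constant $C$ to be chosen, the expected number of overly large fibers. Since $\mathbb{P}(N_x\ge C+1)\le\binom{|B|}{C+1}q_k^{C+1}\le\mu_k^{C+1}$ and $|\{x:|x|=k\}|\,\mu_k=\mathbb{E}\big[\#\{v\in B:d_{G}(v|A_1)=k\}\big]\le n$, we get $\mathbb{E}\big[\#\{x:N_x\ge C+1\}\big]\le\sum_{k\in I}|\{x:|x|=k\}|\,\mu_k^{C+1}\le O(\log n)\,n^{1-Cc_0/2+o(1)}$, which is $o(1)$ once $C>2/c_0$; Markov then yields that \textbf{whp} every $C_j$ has size at most $C$, which is alternative~(1). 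In the regime $\lambda>0$, for every $x$ with $|x|=k$ we have $N_x\sim\mathrm{Bin}(|B|,q_k)$ with mean $\mu_k\ge n^{c_0/4}$, so Lemma~\ref{l: chernoff}\eqref{l: chernoff 1} (with deviation $\mu_k/2$) gives $\mathbb{P}(N_x<\mu_k/2)\le 2\exp(-\mu_k/12)\le 2\exp(-n^{c_0/4}/12)$; a union bound over the at most $|W|\le 2^{a_1}=n^{O(1)}$ sets $x$ shows that \textbf{whp} every fiber, in particular every nonempty one, has size at least $n^{c_0/4}/2\ge n^{c_0/8}$, which is alternative~(2) with $\beta=c_0/8$.

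The main obstacle --- and the reason the dichotomy is not automatic --- is excluding an intermediate regime in which $\mu_k=n^{o(1)}$ (say polylogarithmic) for the relevant $k$: then the largest of the $n^{\Theta(1)}$ fibers would have size $\Theta(\log n/\log\log n)$, exceeding any constant yet being $n^{o(1)}$, and the claim would fail. This is exactly the locus $\lambda=0$. Two features rescue us. First, the full range of admissible neighbourhood sizes moves the exponent $g$ by only $O(\gamma\epsilon)$, so it suffices to pin down the sign of $g$ at a single point. Second, at the arithmetically delicate value of $p$ where $\theta=1$ --- where one would naively read off exponent $1-\theta=0$ --- the definition $a_1=\tfrac1p(1+(1+\gamma)\epsilon)\log_{\rho}n$ carries a genuine $(1+\gamma)\epsilon$ worth of slack, forcing $\lambda=-(1+\gamma)\epsilon$ to be a fixed negative constant. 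Thus the only subtle point is the order of quantifiers: $\gamma$ and $\epsilon$ are chosen small only after $p$ is fixed, but never set to $0$, which is precisely what keeps $\lambda$ bounded away from $0$.
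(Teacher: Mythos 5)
Your proof is correct and follows essentially the same approach as the paper: both compute the expected fiber size as $n^{\text{exponent}+o(1)}$ with an exponent linear in $k/\log_\rho n$, observe that the sign of the exponent is determined solely by $p$ (the paper phrases this via $f(p)=2-\log_{1-p}p-\tfrac1p$ and the threshold $x'\approx 0.64$, you via $\theta=\tfrac1p-1-\log_\rho p$ and the threshold $\theta=1$, which are equivalent since $f(p)=1-\theta$), and then conclude by a first-moment/Markov argument in the small regime and a Chernoff plus union bound in the large regime. The only difference is presentational: you index by the fibers $x\subseteq A_1$ and use the identity $\sum_k\binom{a_1}{k}\mu_k\le n$ to tame the union, whereas the paper indexes by vertices $v$ and conditions on $N_G(v|A_1)$; the underlying calculation is identical.
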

\begin{proof}
    For every $v \in [n] \setminus A_1$, set $X_v = 0$ if $v$ is  $A_1$-bad and $X_v$ to be the number of vertices $u \sim v$ otherwise. Fix $v \in [n] \setminus A_1$ and consider the random variable $\epsilon' = \epsilon'(v)$ such that $|N_{G(n,p)}(v) \cap A_1| = (1 + \epsilon') \log_{\rho} n$. Note that if $v$ is $A_1$-good, then $\epsilon' \in [\epsilon, (1+2\gamma)\epsilon]$.  We have
    \begin{align*}
        \mathbb{E}[X_v | N_{G(n,p)}(v) \cap &A_1\ \text{and $v$ is $A_1$-good}] = (1 + o(1)) n p^{|N_{G(n,p)}(v) \cap A_1|} (1-p)^{|A_1| - |N_{G(n,p)}(v) \cap A_1|}
        \\&= (1 + o(1)) n p^{(1 + \epsilon')\log_{\rho} n} (1-p)^{\frac{1 + (1+\gamma)\epsilon}{p} \log_{\rho} n - (1 + \epsilon')\log_{\rho} n}
        \\&= (1 + o(1)) n (1-p)^{(\log_{1-p} p) (1 + \epsilon')\log_{\rho} n} (1-p)^{\frac{1 + (1+\gamma)\epsilon}{p} \log_{\rho} n - (1 + \epsilon')\log_{\rho} n}
        \\&= (1 + o(1)) n^{1 - (1 + \epsilon') (\log_{1-p} p) - \frac{1 + (1+\gamma)\epsilon}{p} + 1 + \epsilon'}
        \\&= (1 + o(1)) n^{2 - \log_{1-p}p - \frac{1}{p} + \epsilon' - \epsilon' \log_{1-p}p - \frac{(1+\gamma)\epsilon}{p}}.
    \end{align*}


    Set $f(x) \coloneqq 2 - \log_{1-x} (x) - \frac{1}{x}$. Let $x'$ be in $(0,1)$ such that $f(x')=0$, noting that $f(x)$ is increasing in $x \in (0,1)$ and $f(x) = 0$ around $x \approx 0.64$. We then have that for $\epsilon$ small enough and for some constant $c>0$, $\epsilon' - \epsilon' \log_{1-x'}x' - \frac{(1+\gamma)\epsilon}{x'}<-c\epsilon$. Thus, if $p \le x'$, we have that $\mathbb{E}[X_v\,|\, \text{$v$ is $A_1$-good}] \le n^{-c\epsilon}$ for every $v \in [n] \setminus A_1$, where we stress that $\epsilon$ can depend on $p$. By Lemma \ref{l: chernoff}\eqref{l: chernoff 2},
    \[
        \mathbb{P}(X_v > C\,|\, \text{$v$ is $A_1$-good}) \le \left(\frac{e}{\frac{C}{\mathbb{E}[X_v]}}\right)^{C} \le n^{-C\cdot c\epsilon} = o(1/n).
    \]
    Hence, by the union bound over all $v \in [n]\setminus A_1$, \textbf{whp} $X_v \le C$ for every $v \in V(\Gamma)$, and thus the first item of the claim holds.
    
    If $p > x'$, then we may choose $\epsilon$ small enough such that $\mathbb{E}[X_v \mid \text{$v$ is $A_1$-good}] \ge n^{\beta}$, for some $\beta > 0$, for every $v \in [n] \setminus A_1$. By Lemma \ref{l: chernoff}\eqref{l: chernoff 1},
    \[
        \mathbb{P}(X_v \le 0.5 \mathbb{E}\left[X_v \mid \text{$v$ is $A_1$-good}\right] \mid \text{$v$ is $A_1$-good}) \le \exp\left(-\Theta(n^{\beta})\right).
    \]
    Hence, by the union bound over all $v \in V(\Gamma)$, \textbf{whp} $X_v\ge 0.5 n^{\beta}$ for every $v \in [n]\setminus A_1$ which is $A_1$-good, and thus the second item of the claim holds as well.
\end{proof}
We complete the proof with the two following lemmas.
\begin{lemma}
    If there exists a constant $C$ such that
    \[
        |C_j| \le C,\quad \forall j \in [\ell],
    \]
    then \textbf{whp} $\alpha(\Gamma_i) \le \frac{n}{\log_{\rho}n}$.
\end{lemma}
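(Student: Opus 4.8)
We are in the case $p > 1/2$, and we have already established (Claim \ref{l: key p>0.5}) the dichotomy for the equivalence classes $C_1,\dots,C_\ell$ of $V(\Gamma)$ under $\phi$. The lemma to prove handles the first horn of that dichotomy: all classes have bounded size $|C_j| \le C$. The target is $\alpha(\Gamma_i) \le n/\log_\rho n$, where $\Gamma_i$ is the $p'$-sparsification of $\Gamma$ (which we may instead think of as $\Gamma \cap G(n,p)$ restricted to the coordinate $i$, but for the argument it's cleanest to work with $\Gamma$ together with an independent $p'$-retention of each edge).

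**Strategy.** The plan is to push the argument through $\widetilde G_W$ as in the $p=1/2$ case, but now $\phi$ is only boundedly-many-to-one rather than injective, so $\widetilde G_W = G_W[\phi(V(\Gamma))]$ is no longer isomorphic to $\Gamma$; instead $\Gamma$ is a "blow-up" of $\widetilde G_W$ where each vertex is replaced by an independent set of size at most $C$ (with all edges present between classes that are adjacent in $\widetilde G_W$, since adjacency in $\Gamma$ depends only on $\phi$-images). First I would re-run Claim \ref{claim:hamming-balls}, Claim \ref{c: number of edges} and Lemma \ref{claim:bound-on-alpha-G_W} verbatim — these never used injectivity of $\phi$, only that $\phi$ maps $V(\Gamma)$ into $W$ and that each fibre is nonempty — to conclude \textbf{whp} $\alpha(\widetilde G_W(p')) \le n/\log_\rho n$, where $\widetilde G_W(p')$ is the $p'$-random subgraph of $\widetilde G_W$. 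Then I would transfer this bound to $\Gamma_i$. An independent set $U$ in $\Gamma_i$ projects under $\phi$ to a set $\phi(U) \subseteq V(\widetilde G_W)$; since each fibre has size $\le C$, $|\phi(U)| \ge |U|/C$. The projection $\phi(U)$ is \emph{not} automatically independent in $\widetilde G_W(p')$ (two $\phi$-adjacent classes might both meet $U$, as long as no retained $\Gamma_i$-edge joins the chosen representatives), so the clean bound $\alpha(\Gamma_i) \le C\cdot\alpha(\widetilde G_W(p'))$ needs a small extra idea. The fix is to observe that if $x,y$ are $\widetilde G_W$-adjacent and $|C_x|,|C_y|$ both meet $U$, then the complete bipartite graph between $C_x\cap U$ and $C_y\cap U$ has at least one edge in $\Gamma$, and it survives into $\Gamma_i$ with probability $p'$; more robustly, one directly bounds the number of independent sets of size $Cn/\log_\rho n$ in $\Gamma_i$ by a union bound exactly as in Lemma \ref{claim:bound-on-alpha-G_W}, using that any such set, having $\ge n/(C\log_\rho n)$ distinct $\phi$-images, induces (by Claim \ref{c: number of edges} applied with the slightly smaller parameter $m/C$ in place of $m$) at least $\Omega(n\log_\rho n)$ potential edges in $\widetilde G_W$ between its $\phi$-classes, each present in $\Gamma$ and each retained independently with probability $p'$, so the probability it is independent in $\Gamma_i$ is at most $(1-p')^{\Omega(n\log_\rho n)}$, which beats the $\exp(\Theta(n))$ union bound over choices.

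**Execution order.** Concretely: (1) recall $\Gamma$ is a $({\le}C)$-blow-up of $\widetilde G_W$ with full bipartite connections, from Claim \ref{l: key p>0.5} case~1 plus the definition of $E(\Gamma)$; (2) fix a candidate independent set $J$ in $\Gamma_i$ of size $m_C \coloneqq Cn/\log_\rho n$, let $J^* = \phi(J)$, so $|J^*| \ge n/\log_\rho n = m$; (3) condition on the event $\mathcal A$ from Claim \ref{claim:hamming-balls} (which holds \textbf{whp}) and apply Claim \ref{c: number of edges} to $J^*$ to get $|E(\widetilde G_W[J^*])| \ge \tfrac14 n\log_\rho n$; (4) each such edge $\{x,y\}$ of $\widetilde G_W$ corresponds to a $\Gamma$-edge between the fibres, and for $J$ to be independent in $\Gamma_i$ every $\Gamma$-edge among the chosen representatives must be un-retained — in particular at least $\tfrac14 n\log_\rho n$ edges of $\Gamma$ are un-retained, an event of probability $\le (1-p')^{n\log_\rho n/4}$ over the independent $p'$-retention; (5) union bound over $\binom{|W|\cdot C}{m_C} \le \exp(\Theta(n))$ choices of $J$ and add $\mathbb P(\neg\mathcal A) = o(1)$ to conclude \textbf{whp} $\alpha(\Gamma_i) \le m_C$, which we may absorb into the desired $O(n/\log_\rho n)$ (and if we want the precise constant $1$, simply note $C$ is a constant so the bound $O(n/\log_\rho n)$ is all that is used downstream — indeed in Claims \ref{claim:I-maximal-degree} and \ref{c: small degree} only $\alpha(\Gamma_i) = O(n/\log_\rho n)$ is needed).

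**Main obstacle.** The only real subtlety — and I expect it to be the crux — is step (4): ensuring that the fibre edges we count in $\Gamma$ are genuinely subjected to \emph{independent} $p'$-retention, so that the conditional probability estimate $(1-p')^{\Omega(n\log_\rho n)}$ is legitimate. This is fine because $\Gamma_i$ is obtained from $\Gamma$ by retaining each of its edges independently with probability $p'$, and the $\tfrac14 n\log_\rho n$ edges produced by distinct $\widetilde G_W$-edges $\{x,y\}$ with chosen representatives are distinct edges of $\Gamma$ (distinct $\widetilde G_W$-edges meet in at most one class, and the representatives are fixed once $J$ is fixed), so their retention indicators are mutually independent. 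With that observed, the union bound goes through exactly as before. A minor bookkeeping point is that $\Gamma$ and $\widetilde G_W$ are built from the same randomness as $A_1$-adjacencies, while the $p'$-retention is fresh; conditioning on $A_1$-edges first (hence on $\Gamma$, on $\phi$, on the class structure, and on $\mathcal A$) and then taking probability over the retention makes the two sources of randomness cleanly separated, just as in the $p=1/2$ proof.
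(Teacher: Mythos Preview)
Your argument is correct and follows essentially the same route as the paper: project via $\phi$ to $\widetilde G_W$, use the bounded fibre size $|C_j|\le C$ to lose only a constant factor, and then feed Claim~\ref{c: number of edges} into a union bound over candidate independent sets. The packaging differs slightly: the paper defines a coupling in which $\{x,y\}\in E(\widetilde G_W(p'))$ iff \emph{all} $\Gamma_{p'}$-edges between $\phi^{-1}(x)$ and $\phi^{-1}(y)$ survive, which makes $\phi(I)$ genuinely independent in $\widetilde G_W(p')$ whenever $I$ is independent in $\Gamma_{p'}$, and then observes that edges of $\widetilde G_W(p')$ are present independently with probability at least $(p')^{C^2}$, reducing verbatim to Lemma~\ref{claim:bound-on-alpha-G_W} with the smaller edge probability. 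Your version instead runs the union bound directly on $\Gamma_i$, picking one representative $\Gamma$-edge per $\widetilde G_W$-edge; this avoids the coupling at the cost of re-doing the final calculation. Both yield $\alpha(\Gamma_i)\le Cn/\log_\rho n$, which (as you note) is all that is used downstream.

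One small inaccuracy worth fixing: you describe $\Gamma$ as a blow-up of $\widetilde G_W$ ``where each vertex is replaced by an independent set''. In fact each class $C_j$ is a \emph{clique} in $\Gamma$, since vertices in the same class have identical $A_1$-neighbourhoods and hence maximal intersection. This does not affect your argument (you only use the inter-class edges, and extra intra-class edges only help), but the description should be corrected.
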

\begin{proof}
    Set $\widetilde{G}_W$. Consider the following coupling $(\widetilde{G}_W(p'), \Gamma_{p'})$. $\Gamma_{p'}$ is obtained by retaining each edge independently in $\Gamma$ with probability $p'$. For every $v, u \in V(\widetilde{G}_W)$, there is an edge $\{u,v\}$ in $\widetilde{G}_W(p')$ if and only if in $\Gamma_{p'}$ there are all the edges between $\phi^{-1}(v)$ and $\phi^{-1}(u)$. Note that
    \[
        \frac{\alpha(\Gamma_{p'})}{C} \le \alpha(\widetilde{G}_W(p')).
    \]
    Indeed, let $I$ be a maximum independent set in $\Gamma_{p'}$. Assume that $I$ has vertices from $m$ different equivalence classes. Observe that $m\ge \frac{|I|}{C}$ as otherwise, by the pigeonhole principle, we have an equivalence class larger than $C$. Notice that $\phi(I)$ is also an independent set in $\widetilde{G}_W(p')$ since, for every $v, u \in I$, there is at least one edge missing in $\Gamma_{p'}$ between the equivalence classes of $v$ and $u$, and thus there is no edge between $\phi(v)$ and $\phi(u)$ in $\widetilde{G}_W(p')$. Hence, $\phi(I) \le \alpha(\widetilde{G}_W(p'))$ and thus, 
    \[
        \frac{\alpha(\Gamma_{p'})}{C} \le m=|\phi(I)|\le \alpha(\widetilde{G}_W(p')).
    \]

    By the coupling above, every edge in $\widetilde{G}_W(p')$ appears with probability at least $(p')^{C^2}$. Thus, it suffices to show that \textbf{whp} the independence number of the binomial random subgraph of $\widetilde{G}_{W}$ obtained by retaining every edge with probability $(p')^{C^2}$ is at most $\frac{n}{\log_{\rho}n}$.
    The rest of the proof is identical to the proof of Claim \ref{claim:bound-on-alpha-G_W}.
\end{proof}
\begin{lemma}
    If there exists $\beta > 0$ such that
    \[
        |C_j| \ge n^{\beta},\quad \forall j \in [\ell],
    \]
    then \textbf{whp} $\alpha(\Gamma_i) \le \frac{n}{\log_{\rho}n}$.
\end{lemma}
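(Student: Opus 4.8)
The plan is to use the hypothesis to show that $\Gamma$ decomposes into few large cliques, and then to apply a routine bound on the independence number of a binomial random graph inside each of them. First I would observe that each equivalence class $C_j$ is a clique of $\Gamma$: if $u\sim v$ with $u\neq v$ then $N_G(u\mid A_1)=N_G(v\mid A_1)$, so $d_G(u,v\mid A_1)=d_G(u\mid A_1)$, and since $u\in V(\Gamma)=B_1$ is $A_1$-good we have $d_G(u\mid A_1)\ge(1+\epsilon)\log_{\rho} n\ge(1+(1-6\gamma)\epsilon)\log_{\rho} n$; hence $\{u,v\}\in E(\Gamma)$ by the definition of $\Gamma$. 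Thus $\{C_1,\dots,C_\ell\}$ is a partition of $V(\Gamma)$ into vertex-disjoint cliques, each of size at least $n^\beta$ by the standing hypothesis, and since $|V(\Gamma)|\le n$ we get $\ell\le n^{1-\beta}$.

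Next, recall that both the vertex set and the edge set of $\Gamma$ are determined by the edges of $G$ incident to $A_1$, whereas $\Gamma_i$ is obtained from $\Gamma$ by retaining each of its edges independently with probability $p'$, using randomness (the edges of $G$ inside $B$) that is independent of that exposure. Hence, conditionally on $\Gamma$, the induced subgraph $\Gamma_i[C_j]$ is distributed exactly as the binomial random graph $G(|C_j|,p')$ on $|C_j|\ge n^\beta$ vertices --- here the clique property of $C_j$ is essential, since otherwise $\Gamma_i[C_j]$ would merely be a (possibly much sparser) subgraph of $G(|C_j|,p')$ and could have a large independent set. By the standard first-moment estimate $\mathbb{P}(\alpha(G(m,p'))\ge t)\le\binom{m}{t}(1-p')^{\binom{t}{2}}$, taking $m\le n$ and $t=C\log n$ for a large enough constant $C=C(p)$ makes the right-hand side $\exp(-\Omega(\log^2 n))=n^{-\omega(1)}$, so a union bound over the at most $n$ classes gives that \textbf{whp} $\alpha(\Gamma_i[C_j])\le C\log n$ for every $j\in[\ell]$.

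Finally, since $\{C_1,\dots,C_\ell\}$ partitions $V(\Gamma_i)=V(\Gamma)$, any independent set $I$ of $\Gamma_i$ satisfies $|I|=\sum_{j=1}^{\ell}|I\cap C_j|\le\sum_{j=1}^{\ell}\alpha(\Gamma_i[C_j])\le \ell\cdot C\log n\le C\,n^{1-\beta}\log n$, which is $o(n/\log_{\rho} n)$ (as $\log_{\rho} n=O(\log n)$ and $\beta>0$), and in particular at most $n/\log_{\rho} n$ for $n$ large. I do not expect a genuine obstacle in this case --- it is considerably softer than the $p=1/2$ argument --- the only points requiring a little care being that the $p'$-thinning is independent of the (random) structure of $\Gamma$, so that $\Gamma_i[C_j]$ really is a binomial random graph, and the count $\ell\le n^{1-\beta}$; note also that the bound on $\mathbb{P}(\alpha(\Gamma_i)>n/\log_{\rho} n\mid\Gamma)$ holds uniformly over all graphs $\Gamma$ that are disjoint unions of cliques of size at least $n^\beta$, so no issue arises from conditioning on the hypothesis supplied by Claim~\ref{l: key p>0.5}.
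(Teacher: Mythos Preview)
Your argument is correct and follows essentially the same route as the paper: observe that each equivalence class $C_j$ is a clique in $\Gamma$, so $\Gamma_i[C_j]$ is a binomial random graph, bound its independence number by a first-moment/union-bound computation, and sum over the at most $n^{1-\beta}$ classes. The only (inessential) difference is quantitative: you bound $\alpha(\Gamma_i[C_j])$ by $C\log n$, whereas the paper uses the cruder threshold $n^{\beta}/\log_{\rho} n$; your choice gives the slightly stronger conclusion $\alpha(\Gamma_i)=o(n/\log_{\rho} n)$, but the structure of the proof is the same.
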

\begin{proof}
    Let $I$ be a maximum independent set in $\Gamma_i$. For every $j \in [\ell]$, set $I_j \coloneqq I \cap C_j$.
    
    Notice that, for every $j \in [\ell]$, $C_j$ is a clique in $\Gamma$. The probability that there is an independent set in $G(n,p)$ of size $n^{\beta} / \log_{\rho}n$ is at most
    \[
        \binom{n}{\frac{n^{\beta}}{\log_{\rho}n}} (1-p)^{\binom{n^{\beta} / \log_{\rho}n}{2}} \le \exp\left(2n^{\beta} -  \frac{pn^{2\beta}}{2\ln^2 n}\right).
    \]
    There are at most $n^{1-\beta}$ many $C_j$s, and thus, by the union bound, \textbf{whp} $I_j \le \frac{n^{\beta}}{\log_{\rho}n}$, for every $j \in [\ell]$. Hence,
    \[
        |I| \le n^{1-\beta} \cdot \frac{n^\beta}{\log_{\rho}n} = \frac{n}{\log_{\rho}n}.
    \]
\end{proof}

\subsection{Theorem \ref{th: property star}}
Let us first recall that since $F$ satisfies property $(\star)$, we may find an edge $\{u,v\}$ such that for every independent set $I\subseteq V(F)$, $F[V\setminus I]$ is non-$F[V\setminus\{u,v\}]$-degenerate. Let $\mathcal{I}\coloneqq \{I\subseteq V\colon I \text{ is an independent set of } F\}$, and let $\mathcal{F}\coloneqq\{F[V\setminus I]\colon I\in \mathcal{I}\}$. By Lemma \ref{l: ABlemma}, \textbf{whp} there exists a graph which is $\mathcal{F}$-free, and is $n^{-\delta}$-dense with respect to $F[V\setminus\{u,v\}]$. 

The proof of Theorem \ref{th: property star} follows then from the construction given in the proof of Theorem \ref{th: complete mp}, where $B$ is taken to be the empty graph (similarly to \cite{KS17}), and in $A_1$ we take $H_{A_1}$ to be the graph guaranteed by Lemma \ref{l: ABlemma}, as detailed in the above paragraph. Again, there can be a small, yet no negligible amount of edges in $G[B]$ such that their endpoints have small common degree in $A_1$. For these edges, we define $B_2$ to be the set of vertices which are not $A_1$-good in $B$, and we use $A_2$ and $A_3$ in the same manner. We note that here Lemma~\ref{lemma:H_B_1-subgraph} is not relevant any more.

\section{Discussion and open problems}\label{s: discussion}
In this paper, we present Conjecture \ref{conjecture} and make progress towards resolving it, in particular obtaining a universal bound of $O(n\ln n)$ for the saturation number $\textit{sat}(G(n,p),F)$ for all $F$ (Theorem \ref{th: global 1}), and characterising a family of graphs for which the bound is linear in $n$ (Theorem \ref{th: global 2}). On the same matter, let us also iterate the question raised in the introduction:
\begin{question}
    Is it true that, for all constant $0<p<1$ and all graphs $F$ where every edge of $F$ is in a triangle, \textbf{whp} $\textit{sat}\left(G(n,p),F\right)=\left(1+o(1)\right)n\log_{\frac{1}{1-p}}n$?
\end{question}

For specific graph families, we answer positively. In particular, we extended the sharp asymptotics results of \cite{KS17}, both to a wide family of graphs (Theorem \ref{th: property star}), and to complete multipartite graphs when $p\ge \frac{1}{2}$ (Theorem \ref{th: complete mp}). However, our proof of Theorem \ref{th: complete mp} requires our graph to be dense enough, so that we may find a subgraph $A$ of an appropriate size and such that the neighbourhoods of the vertices outside this subgraph form a dense enough subset in a Hamming space with the domain $A$. 
 Unfortunately, this no longer holds when $p$ is a small constant (with the technicalities explicit in Lemma \ref{l: many neighbours}). It would be interesting to know whether a similar construction, with different probabilistic or combinatorial tools, could extend the result for $p<\frac{1}{2}$.

Finally, let us reiterate that it was shown in \cite{DHKZ23} that Lemma \ref{l: ABlemma} is, in fact, tight, in the sense that the conditions for it are both sufficient and necessary. This implies that the results that can be obtained by the construction given in \cite{KS17}, where $B$ is taken as an empty graph, are limited to those of Theorem \ref{th: property star}. Indeed, already trying to extend this result to complete multipartite graphs, which may not adhere to the conditions of Lemma \ref{l: ABlemma}, required a delicate (and at times technical) treatment, utilising a coupling with an auxiliary random graph in a Hamming space and proving a tight bound for its independence number using a covering-balls argument.

\paragraph{Acknowledgements} The authors wish to thank the anonymous referees for their helpful comments, and in particular for their corrections in Section \ref{s: problematic section}. The authors further wish to thank Michael Krivelevich for helpful comments and suggestions, and Dor Elboim for fruitful discussions.

\bibliographystyle{abbrv}
\bibliography{main}

\begin{thebibliography}{10}

\bibitem{AS16}
N.~{Alon} and J.~H. {Spencer}.
\newblock {\em {The probabilistic method}}.
\newblock Hoboken, NJ: John Wiley \& Sons, fourth edition, 2016.

\bibitem{DSZ23}
Y.~Demidovich, A.~Skorkin, and M.~Zhukovskii.
\newblock Cycle saturation in random graphs.
\newblock {\em SIAM J. Discrete Math.}, 37(3):1359--1385, 2023.

\bibitem{DZ22}
S.~Demyanov and M.~Zhukovskii.
\newblock Tight concentration of star saturation number in random graphs.
\newblock {\em Discrete Math.}, 346(10):Paper No. 113572, 7, 2023.

\bibitem{DHKZ23}
S.~Diskin, I.~Hoshen, M.~Krivelevich, and M.~Zhukovskii.
\newblock On vertex {R}amsey graphs with forbidden subgraphs.
\newblock {\em Discrete Math.}, 347(3):Paper No. 113806, 5, 2024.

\bibitem{EHM64}
P.~Erd\H{o}s, A.~Hajnal, and J.~W. Moon.
\newblock A problem in graph theory.
\newblock {\em Amer. Math. Monthly}, 71:1107--1110, 1964.

\bibitem{FFS11}
J.~R. Faudree, R.~J. Faudree, and J.~R. Schmitt.
\newblock A survey of minimum saturated graphs.
\newblock {\em Electron. J. Combin.}, DS19(Dynamic Surveys):36, 2011.

\bibitem{FK16}
A.~Frieze and M.~Karo\'{n}ski.
\newblock {\em Introduction to random graphs}.
\newblock Cambridge University Press, Cambridge, 2016.

\bibitem{JLR00}
S.~Janson, T.~{\L}uczak, and A.~Ruci\'{n}ski.
\newblock {\em Random graphs}.
\newblock Wiley-Interscience Series in Discrete Mathematics and Optimization.
  Wiley-Interscience, 2000.

\bibitem{JKV08}
A.~Johansson, J.~Kahn, and V.~Vu.
\newblock Factors in random graphs.
\newblock {\em Random Structures Algorithms}, 33(1):1--28, 2008.

\bibitem{KT86}
L.~K\'{a}szonyi and Z.~Tuza.
\newblock Saturated graphs with minimal number of edges.
\newblock {\em J. Graph Theory}, 10(2):203--210, 1986.

\bibitem{KS17}
D.~Kor\'{a}ndi and B.~Sudakov.
\newblock Saturation in random graphs.
\newblock {\em Random Structures Algorithms}, 51(1):169--181, 2017.

\bibitem{MT18}
A.~Mohammadian and B.~Tayfeh-Rezaie.
\newblock Star saturation number of random graphs.
\newblock {\em Discrete Math.}, 341(4):1166--1170, 2018.

\bibitem{Z49}
A.~A. Zykov.
\newblock On some properties of linear complexes.
\newblock {\em Mat. Sbornik N.S.}, 24(66):163--188, 1949.

\end{thebibliography}
\end{document}